\theoremstyle{theorem}
\newtheorem{theorem}{Theorem}[section]
\newtheorem{lemma}[theorem]{Lemma}
\newtheorem{proposition}[theorem]{Proposition}
\theoremstyle{definition}
\theoremstyle{definition}
\theoremstyle{definition}
\newtheorem{remark}[theorem]{Remark}
\theoremstyle{definition}
\newtheorem{example}[theorem]{Example}
\theoremstyle{definition}
\newtheorem{definition}[theorem]{Definition}
\theoremstyle{definition}
\newtheorem{definitions}[theorem]{Definitions}
\theoremstyle{definition}
\title{Cutting sequences, regular polygons, 
\\and the Veech group}
\author{Diana Davis\footnote{Brown University, 151 Thayer Street, Providence, RI 02912; diana@math.brown.edu; Phone: (401) 863-7956; Fax: (401) 863-9013}}
\begin{document}
\maketitle
\vspace{-2.0em}
\begin{abstract}
We describe the cutting sequences associated to geodesic flow on regular polygons, in terms of a combinatorial process called \emph{derivation}. This work is an extension of some of the ideas and results in Smillie and Ulcigrai's recent paper, where the analysis was made for the regular octagon. It turns out that the main structural properties of the octagon generalize in a natural way.
\end{abstract}

\vspace{-2.0em}

\section{Introduction}

In this paper, we will consider a surface obtained by identifying parallel edges of a pair of regular polygons. This creates a translation surface: a surface that is flat everywhere except possibly at a finite number of \emph{cone points}. For example, identifying opposite parallel edges of a regular octagon yields a translation surface that is flat everywhere except at the single vertex, where the cone angle is $8\cdot\frac{3\pi}{4} = 6\pi$. 

Translation surfaces arise in unfolding billiards on rational polygons: Instead of considering a billiard path on the polygon, we use the Zemlyakov-Katok construction to  reflect the polygon across its edges, to form a larger surface on which billiard trajectories follow straight lines, always in the same direction (see \cite{ZK}). When the billiard trajectory hits an edge of the polygon, the polygon is reflected in the edge so that, rather than bouncing off at an angle that equals the angle of incidence, the trajectory continues in a straight line. For example, studying billiards on a rectangular table is equivalent to studying geodesic trajectories on a rectangular torus with edges twice as long, and studying billiards on the ($\pi/5, \pi/5, 3\pi/5$) triangle is equivalent to studying geodesic trajectories on a double regular pentagon translation surface (see \cite{DFT}). 

Surfaces obtained via this unfolding carry a translation structure, and are a special case of translation surfaces:

\begin{definition} \label{translationsurface}
A \emph{translation surface} is a collection of polygons with pairs of parallel edges identified in such a way that the identifications are restrictions of translations, every edge is identified with exactly one other edge, and when two edges are identified, their normal vectors point in opposite directions.
\end{definition}

In addition to being a translation surface, a surface created by identifying edges of regular polygons in this way sometimes has additional affine symmetry, and when its group of linear parts of affine symmetries forms a lattice, it is a Veech surface (for a full definition, see \cite{Veech}). Studying the geodesic flow on Veech surfaces can give us results about billiards on the corresponding polygons (for a definitive survey, see \cite{Masur}).

The Veech group of a Veech surface is its group of linear parts of affine symmetries. The Veech group of the surface formed by identifying opposite parallel edges of an $n$-gon (or a pair of $n$-gons for odd $n$) is the $(n,\infty,\infty)$ triangle group. Triangle groups are the simplest possible examples of Veech groups, and they give rise to lots of interesting mathematics. We use an element of the Veech group to shear our polygons, then cut them up and reassemble them back into their original shape, while respecting the side identifications of the surface. We are interested in geodesic trajectories on these surfaces, their associated cutting sequences, and the effect on the cutting sequences of applying the shearing Veech element to the surface. 

In this paper, we will frequently abuse language and refer to, for example, ``the octagon surface," when we mean the surface created by identifying parallel edges of an octagon, with an embedded graph consisting of the edges that were glued together to create the surface. Most of our analysis concerns these edges, so it is helpful to think of regular polygons and their edges sitting flat in the plane, rather than thinking about the associated surface with its embedded graph.

One can consider linear trajectories on translation surfaces, which are geodesics with respect to the flat metric. The classical example is to draw a line on the square torus, or equivalently to draw a line on a square grid (see Section \ref{square}). As the trajectory flows across the polygon, we write down the letter corresponding to each edge it crosses, which gives us a bi-infinite sequence of letters called a \emph{cutting sequence}. We will consider only bi-infinite trajectories, i.e. those that do not pass through corners. When our polygon has an odd number of edges (which is the case we study in this paper), we use two copies that are reflections of each other, so that we have pairs of parallel edges to identify.

On the square torus, the cutting sequences are called \emph{Sturmian sequences}, which have been studied extensively (see, for example, \cite{Arnoux} or \cite{Lothaire}).  A trajectory on a Veech surface satisfies the \emph{Veech dichotomy}: either it is periodic, in which case it can be seen on the polygons as a finite number of line segments from edge to edge, or it is non-periodic, in which case the trajectory is uniformly distributed on the polygons and in fact is dense (see \cite{Veech}, Theorem 1.4, or \cite{SW}, or \cite{Vorobets}). 

Recently, Smillie and Ulcigrai gave a characterization of geodesic trajectories and their corresponding sequences on the surface formed by identifying opposite parallel edges of a regular octagon (\cite{SU}, \cite{SU2}). In the present paper, we present a simplified understanding of the method. The same methods they used for the octagon can be applied to the double pentagon surface, and in fact to all double odd-gons (double regular $n$-gons with odd $n$). 

On a polygon surface, we label the edges $A,B,C,\ldots$ and then look at infinite geodesic paths on the surface. Each time the path crosses an edge, we write down its corresponding letter, and by doing this we obtain a bi-infinite  \emph{cutting sequence} in the letters $\{$A,B,C,\ldots$\}$. 

The basic question is to characterize which sequences arise as cutting sequences. The key tool towards this characterization is to analyze how affine symmetries act on trajectories and cutting sequences, in particular the shearing Veech element. In this paper, we describe and analyze the action of the Veech element on cutting sequences on double $n$-gons for odd $n$. Since this action is the key step toward the characterization of cutting sequences, this shows that the same methods used by \cite{SU} for the octagon and regular $2n$-gons could be applied, and the rest of the characterization could be performed in a similar way.

Given an existing cutting sequence, we can use the symmetries of the polygon to find others. Rotation and reflection give us ``new" sequences by simply permuting the edge labels. But the key feature that makes the octagon a Veech surface is that it has another symmetry: the \emph{shearing Veech element}, a Veech element whose derivative is a parabolic element of $SL_2 (R)$. This element is an affine automorphism that acts as an isomorphism on the surface. Applying the shear to the surface ``twists" it, and transforms a trajectory on the surface in such a way that its corresponding cutting sequence is a genuinely new cutting sequence.

To explain the ``twist," we must first describe the \emph{cylinder decomposition} of a polygonal surface. We will break up our polygons into parallelogram ``cylinders," each with a pair of horizontal edges. To decompose a polygonal surface into cylinders, we align it so that an edge is horizontal, and then cut along horizontal lines through vertices of the polygons. Each cylinder is made from two triangles or two trapezoids, glued together along an edge. An example is given in Figure \ref{cylinderdecomposition} for the double pentagon surface, which has two cylinders. 

\begin{figure}[!h] 
\centering
\includegraphics[width=200pt]{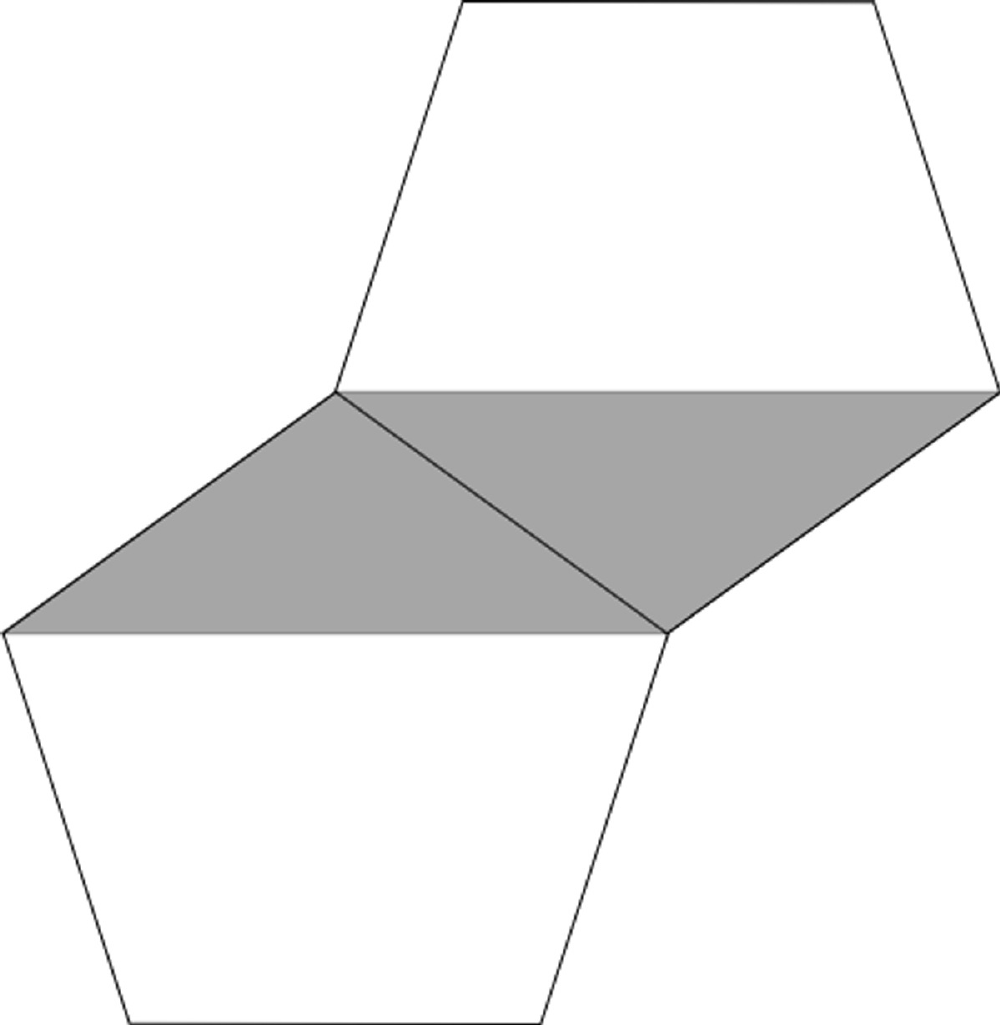}
\begin{quote}\caption{Cylinder decomposition of the double pentagon surface. One cylinder is white; the other is shaded \label{cylinderdecomposition}} \end{quote}
\end{figure} 

The \emph{modulus} of a cylinder is its width divided by its height. In Proposition \ref{veechelt}, we show that all the cylinders of a double odd-gon surface have the same modulus. This modulus appears in the upper-right entry of the $2$x$2$ shearing Veech element matrix, which is the derivative of an affine automorphsm. (The proposition is not a new result; a similar result appears in \cite{Veech}.)

The shearing Veech element acts as a twist in each cylinder, which in the plane looks like pushing a cylinder's top edge to the right an integer number of times. However, it may not twist all the cylinders the same number of times; for example, the Veech shear of the octagon surface shears the rectangular middle cylinder twice and the parallelogram top and bottom cylinder just once (see the octagon on the left side of Figure \ref{octagonshear}). 

We assume that the polygons have unit-length edges, that one edge is horizontal, and that the horizontal edge has endpoints $(0,0)$ and $(1,0)$, where the origin is a fixed point. Then for a single or double polygon, the shearing Veech element, which is the derivative of a particular affine automorphism of the surface, is
\begin{equation*}
M_n = 
\left( \begin{matrix} 1&2\cot(\pi/n)\\ 0&1 \end{matrix} \right).
\end{equation*}
Notice that $M_n$ is a horizontal shear; it only affects the $x$-coordinate of a point, while the $y$-coordinate is unchanged. We will mostly use this orientation-preserving shear because it simplifies the analysis, but the Veech group generator that we will actually apply to the cutting sequences is $\bigl( \begin{smallmatrix} -1&0\\ 0&1 \end{smallmatrix} \bigr) M_n$.

Smillie and Ulcigrai study \emph{derivation} on the octagon, a procedure that takes a trajectory and its corresponding cutting sequence, and shears the octagonal surface using the Veech element $M_8$ to produce a new trajectory and cutting sequence from the original one. By studying the action of the shear on the surface, Smillie and Ulcigrai determined that its effect on sequences is to eliminate all the letters in the cutting sequence except those that are \emph{sandwiched}:

A \emph{sandwiched letter} is a letter that is preceded and followed by the same letter. For example, in the sequence fragment $\ldots ABC\mathbf{D}C\mathbf{CC}C\mathbf{BCB}CDE \ldots $, the sandwiched letters are the ones in bold, so the derived sequence fragment is $\ldots DCCBCB\ldots$. In other words, the derivation rule for the regular octagon is "keep only sandwiched letters."

To formalize this rule, we will define a function $ksl$ which \underline{k}eeps only the \underline{s}andwiched \underline{l}etters in a sequence. For the example above,
\begin{eqnarray*}
s &=& \ldots ABCDCCCCBCBCDE \ldots \\
ksl(s) &=& \ldots DCCBCB \ldots
\end{eqnarray*}

In the work that follows, we refer to periodic sequences by one period, so for example, $BECE$ is shorthand for the periodic sequence $\ldots BECEBECEBECE\ldots$. In that sequence, which we will use extensively in Chapter \ref{deriveBECE}, the sandwiched letters are $B$ and $C$, so the derived sequence is $BC$. In our notation, $ksl(BECE) = BC$.

Note also that if we derive the sequence $BC$, it is unchanged, so we could keep deriving it forever. In contrast, deriving the sequence $ABC$ yields the empty sequence, so it is not derivable. Given a sequence, an important question is whether it could be a cutting sequence. Being derivable is a necessary, but not sufficient, condition for being a cutting sequence. $BECE$ could potentially be (and is) a cutting sequence, while $ABC$ cannot.

\subsection{Results}

$\mathbf{Theorem \ \ref{koslalln}.}$
On a double regular $n$-gon with $n$ odd, the effect of the Veech element $\left ( \begin{smallmatrix} -1&0\\ 0&1 \end{smallmatrix} \right) M_n$ on a cutting sequence $s$ is $ksl(s)$.
\\ \\
In Chapter \ref{deriveBECE}, we show that the derivation rule for a sequence $s$ on the double pentagon is $ksl(s)$. We work through the full argument for the explicit pentagon case, to demonstrate the method for a simple case before generalizing it to the above theorem, which applies to double regular $n$-ons for all odd $n$.
\\ \\
In order to prove this, we need to understand the result of applying a shearing Veech element. In reading Smillie and Ulcigrai's work, we were especially interested in their diagrams of the effect of applying the regular octagon's shearing Veech element $M_8$, as in Figure \ref{octagonshear}. To locate the vertices of the sheared octagon, they used a kind of grid, formed by gluing together multiple copies of a regular octagon.

\begin{figure}[!h] 
\centering
\includegraphics[width=430pt]{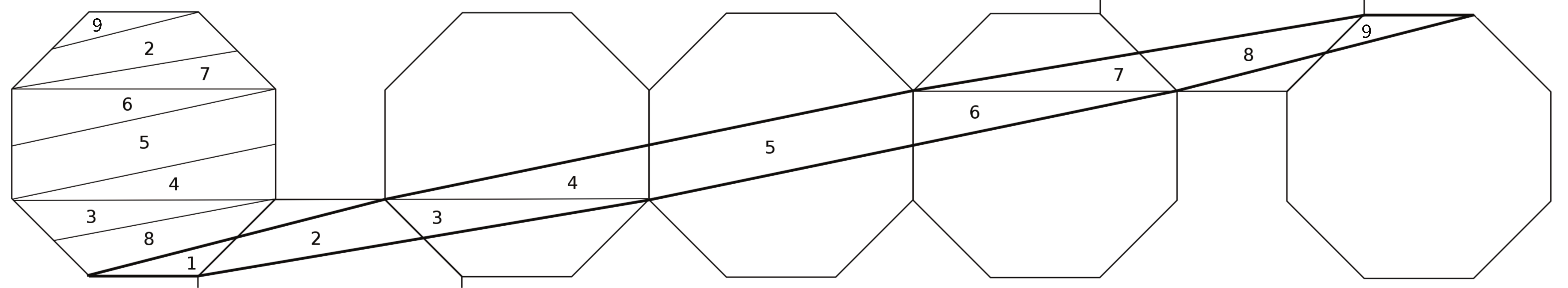}
\begin{quote}\caption[]{The result of applying the matrix $M_8 = \left ( \begin{smallmatrix} 1&2(1+\sqrt{2})\\ 0&1 \end{smallmatrix} \right) $ to the regular octagon \label{octagonshear}} \end{quote}
\vspace{-2.0em}
\end{figure} 

Smillie and Ulcigrai applied the Veech element $M_8$ to the left-most regular octagon in Figure \ref{octagonshear}. This sheared it into the elongated octagon whose parts are consecutively numbered 1 through 9. The sheared octagon can be cut and reassembled back into a regular octagon, as shown. (In the underlying octagonal structure, the second and sixth octagons are not fully included in the figure.)

We call this underlying grid the \emph{vertex generator guide}. We describe how to construct it for the double pentagon in Chapter \ref{vertexguide}, and then in general in Chapter \ref{reassembly}. The following theorem explains the connection between the vertex generator guide and the shearing Veech element:
\\ \\
$\mathbf{Theorem \ \ref{theyrethesame}}$ (Reassembly Theorem).
The effect of applying the shearing Veech element to a double odd-gon is to slide its vertices to corresponding locations on the vertex generator guide.
\\ \\
Several examples of the effect of applying the shearing Veech element to double $n$-gons, including the vertex generator guide and its underlying polygonal structure, are in Figures \ref{pentshear}, \ref{heptagonshear} and \ref{nonagonshear}, for $n=5, 7$ and $9$, respectively. In each figure, the dark sheared polygon is the image of the dark regular polygon, and the light sheared polygon is the image of the light regular polygon. The regular polygons' vertices are sheared to the points on the vertex generator guide, which are highlighted by large dots. These points are vertices on the underlying polygonal structure. 

\begin{figure}[!h]
\centering
\includegraphics[width=350pt]{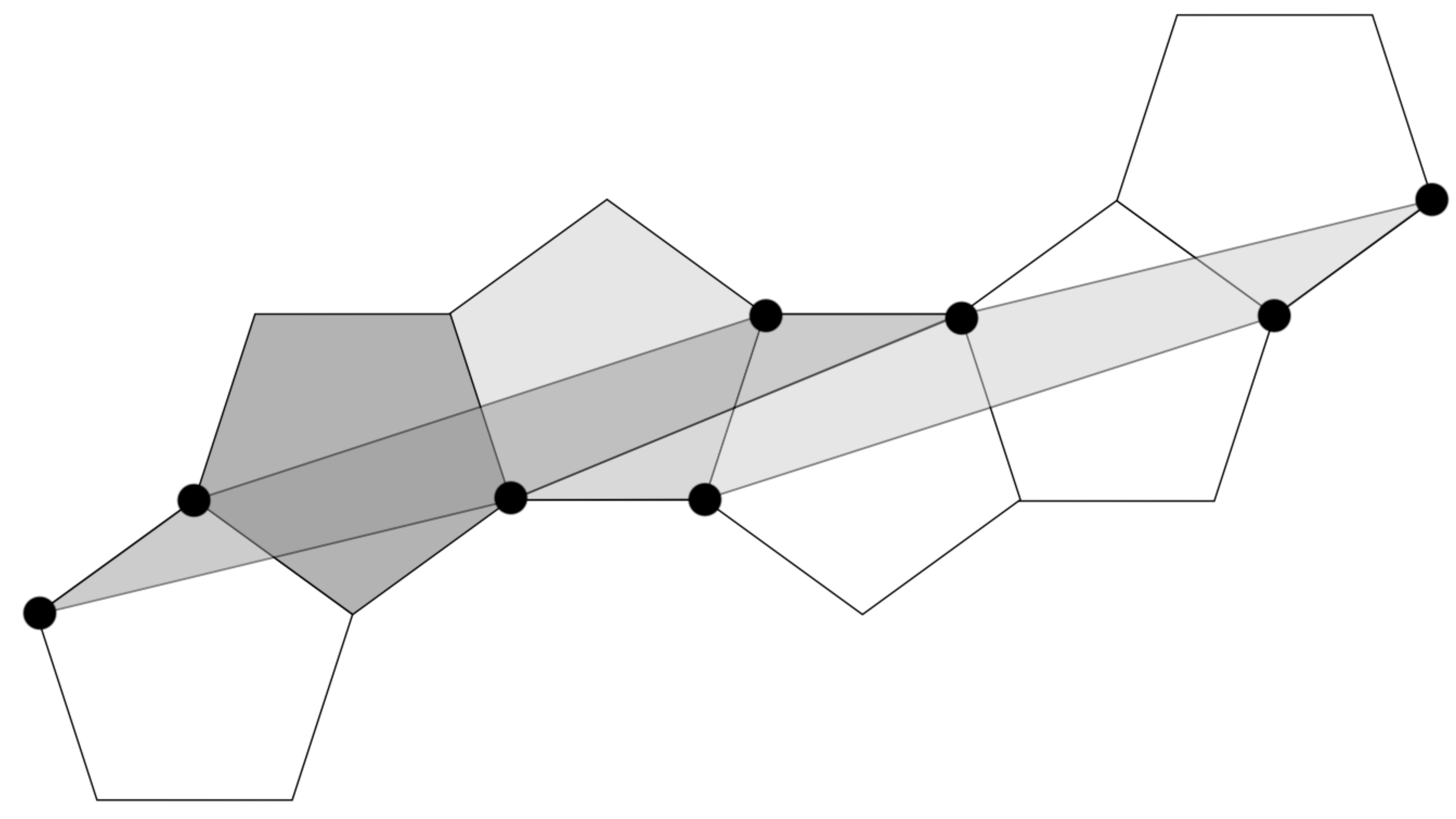}
\begin{quote}\caption{The effect of applying the Veech element to a double pentagon \label{pentshear}} \end{quote}
\vspace{-2.0em}
\end{figure}

\begin{figure}[!h]
\centering
\includegraphics[width=430pt]{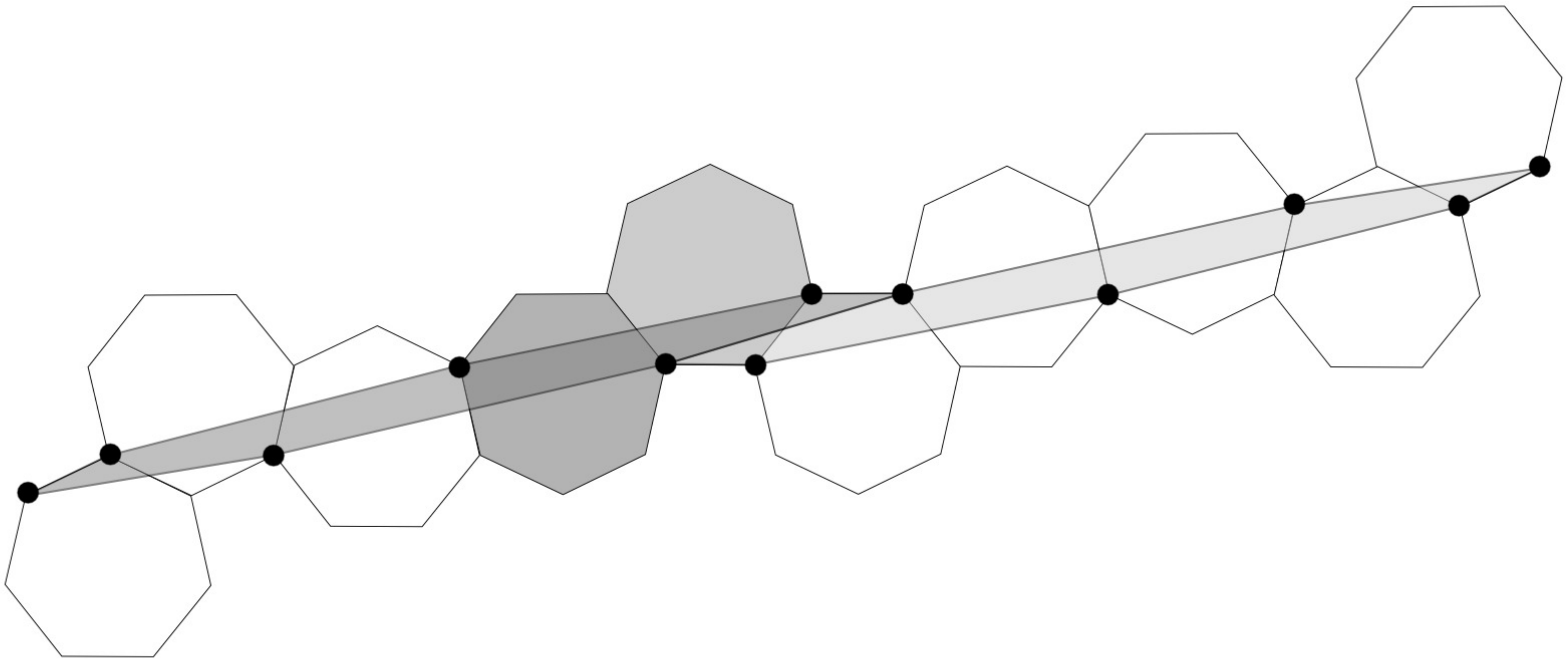}
\begin{quote}\caption{The effect of applying the Veech element to a double heptagon \label{heptagonshear}} \end{quote}
\vspace{-2.0em}
\end{figure}

\begin{figure}[!h]
\centering
\includegraphics[width=430pt]{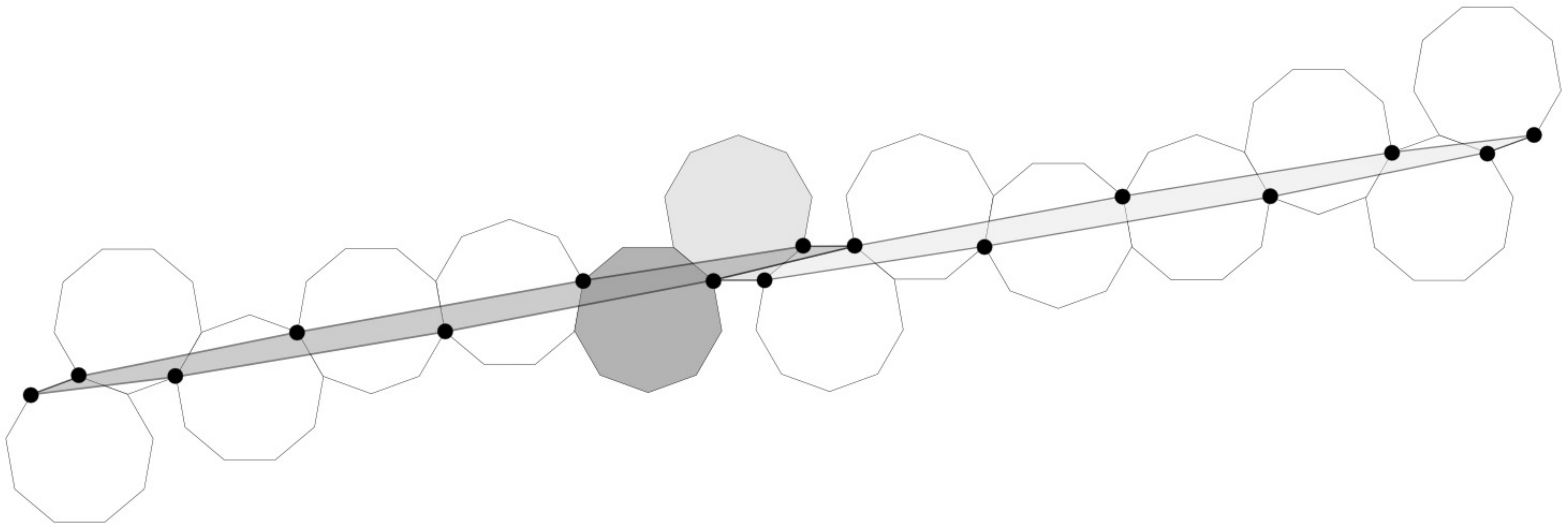}
\begin{quote}\caption{The effect of applying the Veech element to a double nonagon \label{nonagonshear}} \end{quote}
\vspace{-2.0em}
\end{figure}

Once we have the Reassembly Theorem for all odd-gons, we will again use the methods in \cite{SU} for the octagonal case, which we used to prove the derivation rule for the double pentagon (Chapter \ref{deriveBECE}), and apply them to general double odd-gons (Chapter \ref{alln}).

\subsection*{Organization of the paper}
In the second chapter, we give some helpful definitions and lemmas, and explain the geometric process of derivation using the simple example of the square torus. In the third chapter, we define and explain the vertex generator guide, particularly for double odd-gons, which is the basis of the Reassembly Theorem. In the fourth chapter, we explain the geometric process of deriving a sequence on the double pentagon, and then prove that the combinatorial effect on a cutting sequence $s$ is $ksl(s)$. In the fifth chapter, we prove the Reassembly Theorem. In the sixth chapter, we prove that the combinatorial effect of derivation is $ksl(s)$ for a sequence $s$ on any odd-gon. 

More organizational comments will be given at the beginning of each chapter.

\section{Basic properties}

In the first two sections, we give some identities and lemmas that we will use later in the paper. In the third section, we work out the example of the square torus, to illustrate the ideas and methods we will be using throughout the paper.

\subsection{Some Trigonometric Identities}

\begin{lemma}  \label{identities}
The following trigonometric identity holds:
\[
\cot(\theta/2)\sin(k\theta) = 1 + 2\cos(\theta) + \cdots + 2\cos((k-1)\theta) + \cos(k\theta).
\]
\end{lemma}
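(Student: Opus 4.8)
The plan is to clear the denominator: multiply both sides by $\sin(\theta/2)$ (harmless in the applications, where $\theta=\pi/n$ is not a multiple of $2\pi$), so that it suffices to show
\[
\cos(\theta/2)\sin(k\theta) = \sin(\theta/2)\Bigl(1 + 2\cos\theta + \cdots + 2\cos((k-1)\theta) + \cos(k\theta)\Bigr).
\]

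First I would expand the right-hand side using the product-to-sum formula $2\sin(\theta/2)\cos(j\theta) = \sin((j+\tfrac12)\theta) - \sin((j-\tfrac12)\theta)$. Applied to the doubled terms $j=1,\dots,k-1$ this gives a telescoping sum collapsing to $\sin((k-\tfrac12)\theta) - \sin(\tfrac12\theta)$; the leading term $\sin(\theta/2)\cdot 1$ cancels the $-\sin(\tfrac12\theta)$, leaving $\sin((k-\tfrac12)\theta)$ together with the leftover end term $\sin(\theta/2)\cos(k\theta)$. Next I would rewrite $\sin(\theta/2)\cos(k\theta) = \tfrac12\sin((k+\tfrac12)\theta) - \tfrac12\sin((k-\tfrac12)\theta)$, so the right-hand side becomes $\tfrac12\sin((k-\tfrac12)\theta) + \tfrac12\sin((k+\tfrac12)\theta)$. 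A final application of the sum-to-product identity $\sin X + \sin Y = 2\sin(\tfrac{X+Y}{2})\cos(\tfrac{X-Y}{2})$ with $X=(k+\tfrac12)\theta$ and $Y=(k-\tfrac12)\theta$ turns this into $\sin(k\theta)\cos(\theta/2)$, which is exactly the left-hand side.

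There is no real obstacle here — the computation is elementary. The only point requiring care is the bookkeeping at the two ends of the sum: the first term ($1$) and the last term ($\cos(k\theta)$) are not doubled, and it is precisely this asymmetry that makes the telescoping close up correctly, so one should resist the temptation to symmetrize prematurely. Alternatively, the identity can be proved by induction on $k$: the base case $k=1$ reads $\cot(\theta/2)\sin\theta = 2\cos^2(\theta/2) = 1+\cos\theta$, and the inductive step reduces to the single identity $\cot(\theta/2)\bigl(\sin((k+1)\theta) - \sin(k\theta)\bigr) = \cos(k\theta) + \cos((k+1)\theta)$, both sides being equal to $2\cos((k+\tfrac12)\theta)\cos(\theta/2)$.
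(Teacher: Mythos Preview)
Your proof is correct and is essentially the same computation as the paper's, run in the opposite direction: the paper writes $\cot a\sin 2ka$ (with $a=\theta/2$) as $\sin((2k-1)a)/\sin a+\cos 2ka$ and then telescopes $\sin((2k-1)a)$ into $\sin a\sum 2\cos(2ja)$ via the identity $\sin((2j{+}1)a)-\sin((2j{-}1)a)=2\sin a\cos 2ja$, whereas you multiply through by $\sin(\theta/2)$ and telescope the right-hand side back to the left using the same product-to-sum formula. The mathematical content is identical; only the order and the handling of the two boundary terms differ.
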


\begin{proof} 
To simplify the algebra, we will let $a=2\theta$, so that the desired relation is
\begin{eqnarray*}
\cot a\sin 2ka &=& 1 + 2\cos 2a  + 2\cos 4a +\cdots + 2\cos(2(k-1)a) + \cos 2ka. 
\end{eqnarray*}
We will use two standard trigonometric identities:
\begin{eqnarray}
\sin(u)\cos(v)-\cos(u)\sin(v) &=& \sin(u-v), \ \text{and} \label{additionformula} \\
\sin(u+v)-\sin(u-v) &=& 2\cos(u)\sin(v). \label{productformula}
\end{eqnarray}
First, we will expand $\cot a$ into $\cos a/\sin a$, then add and subtract $\cos 2ka$:
\begin{eqnarray*}
\cot a \sin 2ka  &=& \frac{1}{\sin a}(\cos a\sin 2ka) \\
&=& \frac{1}{\sin a }\left(\cos a \sin 2ka  - \sin a \cos 2ka \right)+\cos 2ka 
\end{eqnarray*}
Now apply identity (\ref{additionformula}):
\begin{eqnarray*}
&=& \frac{1}{\sin a }\sin((2k-1)a)+\cos 2ka
\end{eqnarray*}
Now we trivially add and subtract extra terms in a telescoping series:
\begin{eqnarray*}
= \frac{1}{\sin a }\big(\sin a+(\sin 3a-\sin a)&+&(\sin 5a-\sin 3a)+\ldots \\
&+&(\sin(2k-1)a-\sin(2k-3)a)\big)+\cos 2ka
\end{eqnarray*}
and now we apply identity (\ref{productformula})
\begin{eqnarray*}
&=& \frac{1}{\sin a }(\sin a + 2\sin a\cos 2a + 2\sin a\cos 4a + \ldots + 2\sin a \cos (2(k-1)a))+\cos 2ka \\
&=& 1+2\cos 2a +2\cos 4a + \ldots +  2\cos (2(k-1)a)+\cos 2ka,
\end{eqnarray*}
completing the proof.
\end{proof}

\begin{lemma} \label{identitysum}
\begin{eqnarray*}
\sum_{i=1}^{k} \cot \alpha/2 \sin i\alpha = k+ \sum_{i=1}^k (2(k-i)+1)\cos i\alpha.
\end{eqnarray*}
\end{lemma}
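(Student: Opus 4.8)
The plan is to reduce Lemma~\ref{identitysum} to Lemma~\ref{identities} by expanding each term of the left-hand sum and then collecting coefficients of the common cosine terms. First I would apply Lemma~\ref{identities} with $\theta = \alpha$ and upper index $i$ to each summand, obtaining
\[
\cot(\alpha/2)\sin(i\alpha) = 1 + 2\cos\alpha + 2\cos 2\alpha + \cdots + 2\cos((i-1)\alpha) + \cos(i\alpha)
\]
for $i = 1, \ldots, k$, where for $i=1$ the middle block of terms is empty and this reads $\cot(\alpha/2)\sin\alpha = 1 + \cos\alpha$. Summing over $i$ then gives
\[
\sum_{i=1}^{k} \cot(\alpha/2)\sin(i\alpha) = \sum_{i=1}^{k}\left( 1 + \cos(i\alpha) + 2\sum_{j=1}^{i-1}\cos(j\alpha)\right).
\]

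Next I would separate the constant contribution from the cosine contributions. The $k$ copies of $1$ contribute exactly $k$, matching the leading term on the right-hand side of the claim. For each fixed $m \in \{1,\ldots,k\}$, I would count how often $\cos(m\alpha)$ appears: it occurs once with coefficient $1$, namely as the final term $\cos(i\alpha)$ when $i = m$; and it occurs with coefficient $2$ once for each index $i$ with $i - 1 \ge m$, that is, for the $k - m$ values $i = m+1, \ldots, k$. Adding these, the total coefficient of $\cos(m\alpha)$ is $1 + 2(k-m) = 2(k-m)+1$, so
\[
\sum_{i=1}^{k} \cot(\alpha/2)\sin(i\alpha) = k + \sum_{m=1}^{k}\bigl(2(k-m)+1\bigr)\cos(m\alpha),
\]
which is the assertion after renaming the summation index.

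The proof is essentially a reindexing argument on top of Lemma~\ref{identities}, so there is no serious obstacle; the only point requiring care is the bookkeeping of which indices $i$ contribute a $\cos(m\alpha)$ term, and in particular handling the empty inner sum in the $i=1$ case correctly. I would double-check that edge case explicitly to make sure the coefficient count $2(k-m)+1$ remains valid at the extreme values $m=1$ (coefficient $2k-1$) and $m=k$ (coefficient $1$).
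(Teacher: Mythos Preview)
Your proof is correct and follows exactly the approach indicated in the paper, which simply says the identity follows from repeated application of Lemma~\ref{identities}; you have merely filled in the coefficient-counting details that the paper leaves to the reader.
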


\begin{proof}
We can obtain this identity from repeated application of Lemma \ref{identities}.
\end{proof}

\subsection{The Veech Group}

\begin{definition} \label{veechgroup}
Given a surface $S$, let $A(S)$ be the group of affine diffeomorphisms from $S$ to itself, taking cone points to cone points. Let $f:A(S) \mapsto GL_2 (\mathbb{R})$ be the function that takes a locally affine diffeomorphism to its derivative. The Veech group is the image of $A(S)$ under $f$, which lies in $SL_2 (\mathbb{R})$. For a longer definition, see (\cite{SU}, $\mathsection$ 3.1). Note that the classical definition of the Veech group allows only orientation-preserving diffeomorphisms, but (following \cite{SU}) we will allow orientation-reversing diffeomorphisms, because the Veech shear we will use reverses orientation.
\end{definition}

\begin{proposition} \label{veechelt}
Given two regular $n$-gons, $n$ odd, one of which is a half-turn rotation of the other, identify pairs of parallel sides to create a surface. Then the cylinders in the polygons' cylinder decomposition have the same modulus. Thus there exists an affine automorphism whose derivative is $M_n$ = $\bigl( \begin{smallmatrix} -1&-2\cot\pi/n\\ 0&1 \end{smallmatrix} \bigr)$, so $M_n$ belongs to the Veech group.
\end{proposition}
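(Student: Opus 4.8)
The plan is to compute the horizontal cylinder decomposition of the double $n$-gon explicitly, read off the height and circumference of each cylinder from the vertex coordinates, observe that Lemma~\ref{identities} forces these data to have a common ratio, and then obtain $M_n$ from the standard product-of-Dehn-twists construction followed by an orientation-reversing symmetry. Set $\theta = 2\pi/n$ and place one copy $P$ of the regular $n$-gon with a horizontal bottom edge, so its vertices are $v_k = \sum_{j=0}^{k-1}(\cos j\theta,\sin j\theta)$ with $v_0=(0,0)$ and $v_1=(1,0)$; the other copy $P'$ is a half-turn of $P$, which after a translation we may take to be a half-turn about a point on the vertical line $x=\tfrac{1}{2}$. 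Since $\sin j\theta>0$ for $1\le j\le\tfrac{n-1}{2}$, the vertex heights strictly increase up to the apex $v_{(n+1)/2}$, so cutting $P$ (and likewise $P'$) along the horizontal lines through its vertices produces $(n-1)/2$ horizontal strips, the $i$-th of which is a trapezoid --- a triangle when $i=\tfrac{n-1}{2}$ --- whose two slanted sides are the full unit edges $E_{i+1}$ and $E_{n+1-i}$ of the polygon.

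First I would check the combinatorics of the decomposition: the two slanted edges of the $i$-th strip of $P$ are identified with the two slanted edges of the $i$-th strip of $P'$, so these strips glue into a single flat cylinder $C_i$, and the surface is the union of $C_1,\dots,C_{(n-1)/2}$. Then I would compute the two pieces of data for $C_i$. Its height is $h_i = y(v_{i+1})-y(v_i) = \sin i\theta$ by telescoping. For its circumference, observe that the horizontal cross-section of $P$ at the $j$-th vertex level has length $L_j = 1+2\cos\theta+2\cos 2\theta+\cdots+2\cos j\theta$ (a difference of the $x$-coordinates of the two level-$j$ vertices, with $L_{(n-1)/2}=0$), so the $i$-th strip of $P$ has area $\tfrac{1}{2}(L_{i-1}+L_i)h_i$, the cylinder $C_i$ has area $(L_{i-1}+L_i)h_i$ since $P$ and $P'$ are isometric, and hence circumference $w_i = (\mathrm{area})/h_i = L_{i-1}+L_i = 2\bigl(1+2\cos\theta+\cdots+2\cos((i-1)\theta)+\cos i\theta\bigr)$.

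Lemma~\ref{identities}, applied with its angle equal to $\theta$ and $k=i$, identifies the parenthesized expression with $\cot(\theta/2)\sin i\theta$, so $w_i = 2\cot(\theta/2)\sin i\theta$ and the modulus of $C_i$ is $w_i/h_i = 2\cot(\theta/2) = 2\cot(\pi/n)$, the same for every $i$; this proves the first assertion. For the second, write $\mu = 2\cot(\pi/n)$. Because every cylinder has modulus $\mu$, the linear map $\bigl(\begin{smallmatrix}1&\mu\\0&1\end{smallmatrix}\bigr)$ shears each $C_i$ by exactly its own circumference over its full height, i.e. acts there as a single Dehn twist, hence as the identity on every cylinder-boundary curve; the per-cylinder twists therefore agree on common boundaries and assemble into an affine automorphism $\varphi$ with $D\varphi = \bigl(\begin{smallmatrix}1&\mu\\0&1\end{smallmatrix}\bigr)$. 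Composing with the automorphism induced by the reflection $(x,y)\mapsto(1-x,y)$, a symmetry of each $n$-gon and hence of the surface with derivative $\bigl(\begin{smallmatrix}-1&0\\0&1\end{smallmatrix}\bigr)$, produces an affine automorphism whose derivative is $\bigl(\begin{smallmatrix}-1&0\\0&1\end{smallmatrix}\bigr)\bigl(\begin{smallmatrix}1&\mu\\0&1\end{smallmatrix}\bigr) = \bigl(\begin{smallmatrix}-1&-2\cot(\pi/n)\\0&1\end{smallmatrix}\bigr) = M_n$, so $M_n$ lies in the Veech group.

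The step I expect to be the real work is the bookkeeping of the cylinder decomposition: confirming that the strips of the two $n$-gons pair up across the edge identifications exactly as described (so that there are indeed $(n-1)/2$ cylinders, each built from two congruent trapezoids or triangles glued along their slanted edges) and that the circumference of $C_i$ equals $L_{i-1}+L_i$. Once the strips and their widths $L_j$ are pinned down, the equality of the moduli and the value $2\cot(\pi/n)$ follow immediately from Lemma~\ref{identities} --- which appears to be stated precisely so that this ratio comes out independent of $i$ --- and the passage to an element of the Veech group is routine.
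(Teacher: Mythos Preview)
Your proof is correct and follows essentially the same route as the paper's: compute the height $\sin i\theta$ and width $L_{i-1}+L_i=2(1+2\cos\theta+\cdots+2\cos((i-1)\theta)+\cos i\theta)$ of each horizontal cylinder, invoke Lemma~\ref{identities} to see the ratio is $2\cot(\pi/n)$ independently of $i$, and conclude that the shear by this amount is a single Dehn twist on every cylinder. You are more explicit than the paper about the trapezoid bookkeeping and about composing with the reflection $(x,y)\mapsto(1-x,y)$ to obtain the orientation-reversing $M_n$, but the argument is the same.
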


\begin{remark}
A similar result was originally proved in \cite{Veech} (Theorem 1.1).
\end{remark}

\begin{proof}
We decompose the double $n$-gon into horizontal cylinders. Then the matrix $M_n$ must be a horizontal shear, whose effect is to perform a (possibly multiple) twist on each cylinder. The magnitude of the upper-right entry must then be an integer multiple of the modulus (the ratio of width to height) of each cylinder. 

The width of the $k$th cylinder, counting from the bottom of the $n$-gon that has a horizontal edge at its base, is 
\[
2(1 + 2\cos 2\pi/n  + 2\cos 4\pi/n +\cdots + 2\cos(2(k-1)\pi/n) + \cos 2k\pi/n)
\]
and its height is $\sin 2k\pi/n$, so the modulus of the $k$th cylinder is 
\[
\frac{2(1 + 2\cos 2\pi/n  + 2\cos 4\pi/n +\cdots + 2\cos(2(k-1)\pi/n) + \cos 2k\pi/n)}{\sin 2k\pi/n} = 2 \cot \pi/n
\]
by applying Lemma \ref{identities}. This says that the modulus does not depend on $k$, and is always $2 \cot \pi/n$. Thus, the effect of the matrix $M_n$ with $2 \cot \pi/n$ as its upper-right entry is a twist in each cylinder, and $M_n$ is in the Veech group.\end{proof}

\subsection{Cutting sequences on the square torus \label{square}}
As a prelude to our discussion of the regular pentagon and double odd-gons in general, we explain the geometric shearing process and its combinatorial effect on cutting sequences for the square torus, as was also done in the introduction of \cite{SU}. The purpose is to attain an understanding of the process for a simple case, before moving on to double polygons, where the pictures and methods appear more complicated. The square torus is perhaps the simplest example of a translation surface.

For the square torus, we obtain a surface by identifying opposite edges of a square in the usual way. Assume that the square has an edge with endpoints $(0,0)$ and $(1,0)$, and lies in the first quadrant. Then the Veech group of the square torus is $SL_2(\mathbb{Z})^{\pm}$, the group of affine automorphisms that preserve area, and take vertices of the square torus to integer points, and allow matrices with determinant $\pm 1$. (The classical definition of Veech groups is only orientation-preserving automorphisms, but we will allow  orientation-reversing maps because the shear we will work with is orientation-reversing.)

As an example, choose the matrix $\bigl( \begin{smallmatrix} 1&3\\ 0&1 \end{smallmatrix} \bigr) \in SL_2(\mathbb{Z})$. Then the square torus shears to the parallelogram in Figure \ref{parallelogram}. To see that this is what happens, simply apply the matrix to each of the square's vertices. The lower-right corner of the square has coordinates $(1,0)$ and stays at $(1,0)$; the upper-left corner has coordinates $(0,1)$ and moves to the point $(3,1)$, and so on. We can cut up the parallelogram along the gridlines, and reassemble it back into a square torus by translating the pieces. 

\begin{figure}[!h] 
\centering
\includegraphics[height=90pt]{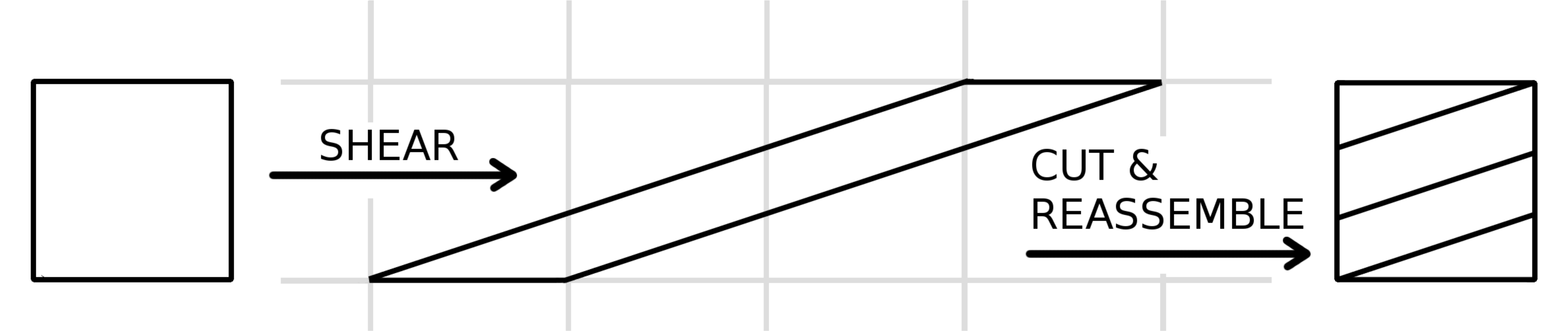}
\begin{quote}\caption{A square, sheared into a parallelogram, then cut and reassembled back into a square \label{parallelogram}} \end{quote}
\end{figure}

We should check that this process of shearing and reassembling preserves the side gluings that make the square into a toroidal surface. In fact, it does, as we can easily check in Figure \ref{parallelogramcheck}.

\begin{figure}[!h] 
\centering
\includegraphics[width=400pt]{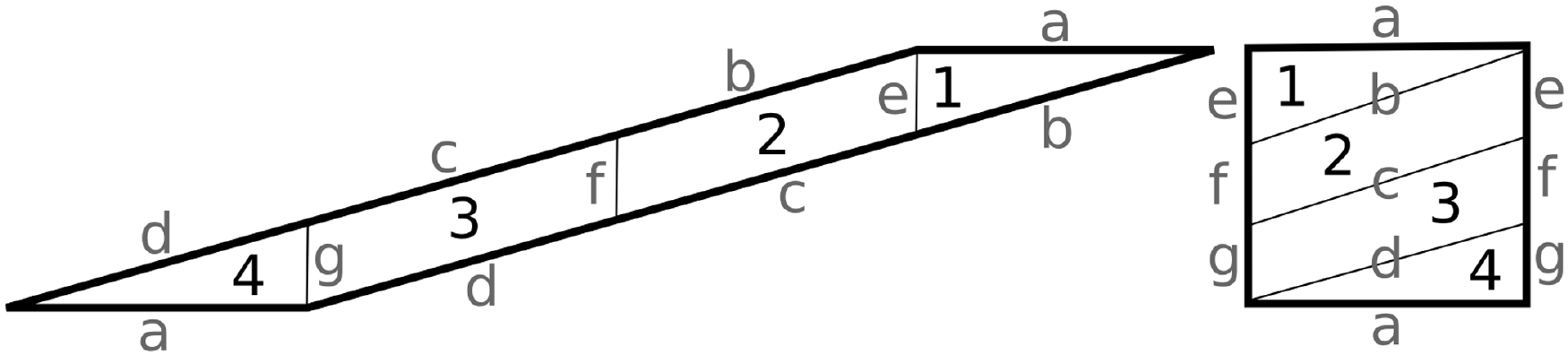}
\begin{quote}\caption{Checking the side identifications of the parallelogram \label{parallelogramcheck}} \end{quote}
\end{figure}

Now that we have described the geometric effect of applying an element of the Veech group to the square torus, we want to look at its geometric effect on the geodesic trajectories on these surfaces, and its combinatorial effect on the associated cutting sequences. We call the sequence resulting from applying the shear the \emph{derived sequence}. 

Because of the symmetry of the edge labels on the square torus, we will restrict our attention to trajectories with positive slopes, that travel from left to right (i.e., for $\theta$ the angle between the trajectory and the horizontal, $\theta \in [0,\pi/2)$). When the trajectory intersects an edge, we write down the letter ($A$ or $B$) associated with the edge. For example, the sequence illustrated at the extreme left of Figure \ref{squareshear} is $ABBB$, a periodic sequence of period $4$. 

The method for deriving a sequence on the square torus, using the Veech element $M_4^{-1} = \bigl( \begin{smallmatrix} 1&-1\\ 0&1 \end{smallmatrix} \bigr)$, is illustrated in Figure \ref{squareshear}: First, draw the trajectory ABBB on the square torus. Then cut the square along the dotted line and reassemble it into a parallelogram. Finally, "un-shear" the parallelogram back into a square by appling the inverse Veech element $M_4^{-1} =  \bigl( \begin{smallmatrix} 1&-1\\ 0&1 \end{smallmatrix} \bigr)$ to obtain the derived trajectory ABB.

\begin{figure}[!h]
\centering
\includegraphics[height=80pt]{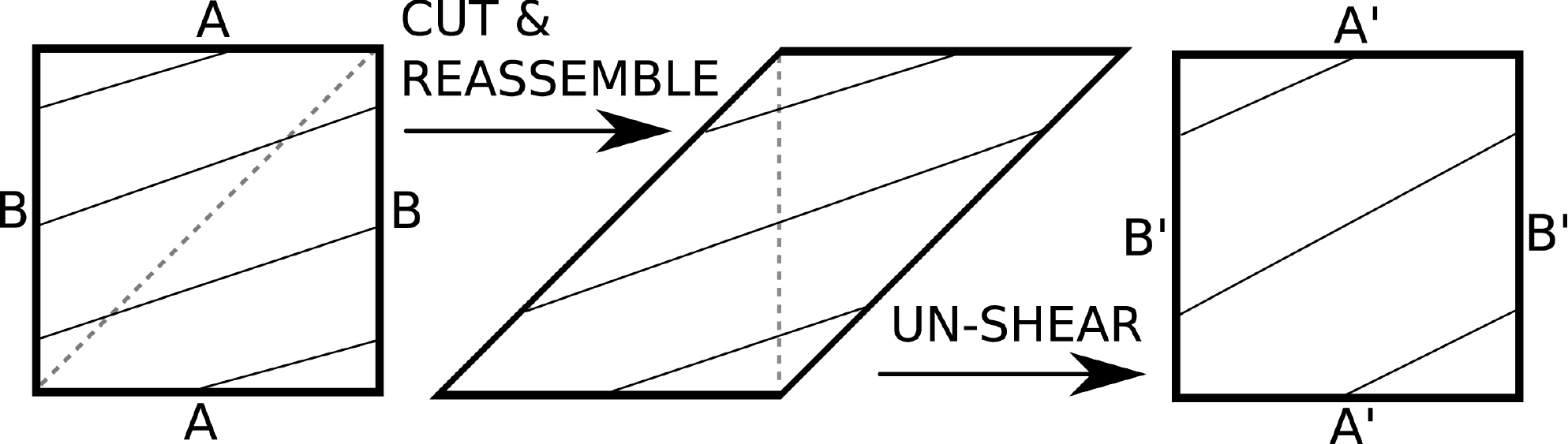}
\begin{quote}\caption{Our method for shearing the square torus and understanding the shear's effect on trajectories \label{squareshear}} \end{quote}
\vspace{-2.0em}
\end{figure}

Other literature uses the four-step process illustrated in Figure \ref{squareshear2nd}, using the matrix $M_4 = \bigl( \begin{smallmatrix} 1&1\\ 0&1 \end{smallmatrix} \bigr)$: First, draw the trajectory $ABBB$ on the square torus. Then flip the square horizontally. Then shear the square to a parallelogram by applying the Veech element $M_4$. Make a vertical cut and reassemble the pieces back into a square. Finally, flip the square to return it to its original orientation, and obtain the derived trajectory $ABB$

In fact, either method is fine; we choose the first method because it is simpler. To see this method applied to the double pentagon, see Figure \ref{second123} in Chapter \ref{deriveBECE}.

\begin{figure}[!h]
\centering
\includegraphics[width=430pt]{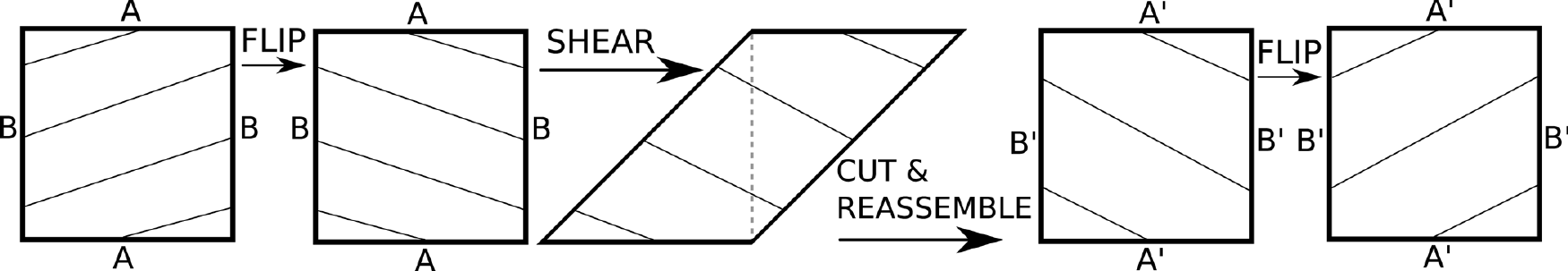}
\begin{quote}\caption{An equivalent method for determining the geometric effect of derivation, which we choose not to use \label{squareshear2nd}} \end{quote}
\vspace{-2.0em}
\end{figure}

There is an easy way to remember what happens to cutting sequences on the square under derivation: "Between every two A's, remove a B." Then it is straightforward to see that the derivation of the sequence ABBB is ABB; simply write out the sequence a few times. This rule applies to all cutting sequences on the square torus with $\theta \in [0,\pi/4)$ regardless of complexity; see \cite{Series}, or  Proposition 1.2.2 in \cite{SU}. Note that while the derivation rule $ksl(s)$ applies to sequences $s$ on the octagon, the double pentagon, and all double odd-gons, it does not apply to the square torus.

\subsection*{Acknowledgements}

I would like to thank Sergei Tabachnikov for illuminating conversations in December 2010; Ronen Mukamel for a helpful conversation in May 2011; Corinna Ulcigrai for helpful comments on an early draft; and Richard Schwartz for ongoing direction and guidance, and many useful suggestions as the paper reached a final form.

I would also like to thank the referee, whose very extensive and helpful report has considerably improved both the content and the form of this paper.

\section{The vertex generator guide \label{vertexguide}}

In this chapter, we will describe how to construct the vertex generator guide for a double $n$-gon for odd $n$. Our goal is to determine the new locations of the double $n$-gon's vertices, after shearing them with the Veech element $M_n =  \bigl( \begin{smallmatrix} 1&2\cot{\pi/n}\\ 0&1 \end{smallmatrix} \bigr)$. 

Let $P_U$ be a regular $n$-gon with unit-length edges, whose lowest vertices are $(0,0)$ and $(0,1)$ (shown as black dots in Figure \ref{vgglabeled}). Label the horizontal edge $S_1$, and label the edges $S_2, S_3, \ldots , S_n$ counter-clockwise. Let $P_L$ be the reflection of $P_U$ across $S_n$, and label its edges to correspond to those of $P_U$ (see Figure \ref{vgglabeled}).

\begin{figure}[!h]
\centering
\includegraphics[width=300pt]{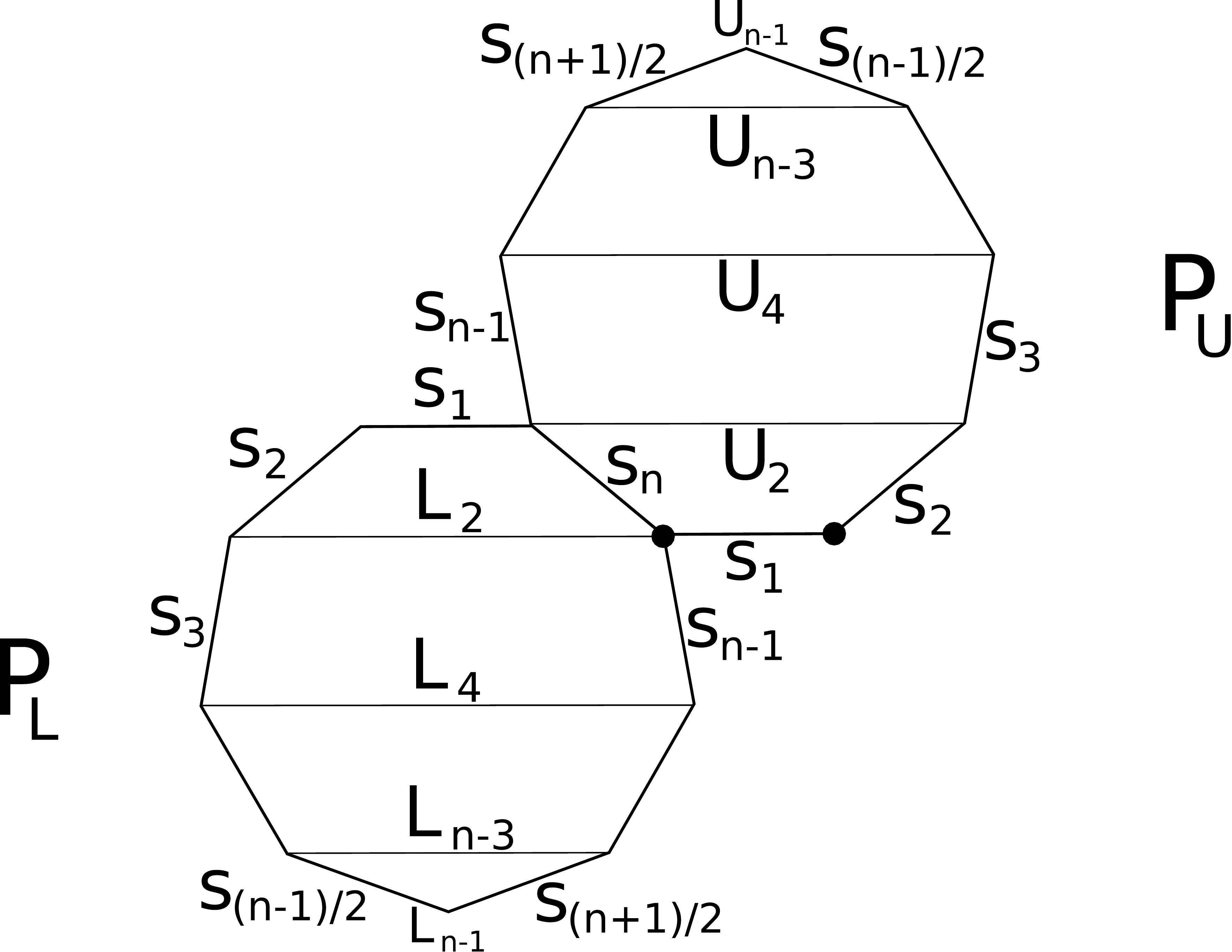}
\begin{quote}\caption{Setting up the polygon, edge and horizontal bar labels for the vertex generator guide \label{vgglabeled}} \end{quote}
\vspace{-2.0em}
\end{figure}

We label the polygons' horizontal diagonals $U_2, U_4, \ldots, U_{n-3}$, $L_2, L_4, \ldots , L_{n-3}$. To avoid awkward terminology, we will refer to these segments, and certain of their translated images, as \emph{horizontal bars}. It is helpful to think of the lowest and highest vertices as horizontal bars of length $0$, so we will label them $U_{n-1}$ and $L_{n-1}$ for consistency. The vertex generator guide is defined based on these bars.

First, we will construct the images of $P_U$. Glue a copy of $P_L$ along edge $S_2$ of $P_U$. Then glue a copy of $P_U$ along edge $S_n$ of this right-most copy of $P_L$, and denote it by $P_U^{(2)}$ (Figure \ref{vggU2}).

\begin{figure}[!h]
\centering
\includegraphics[width=250pt]{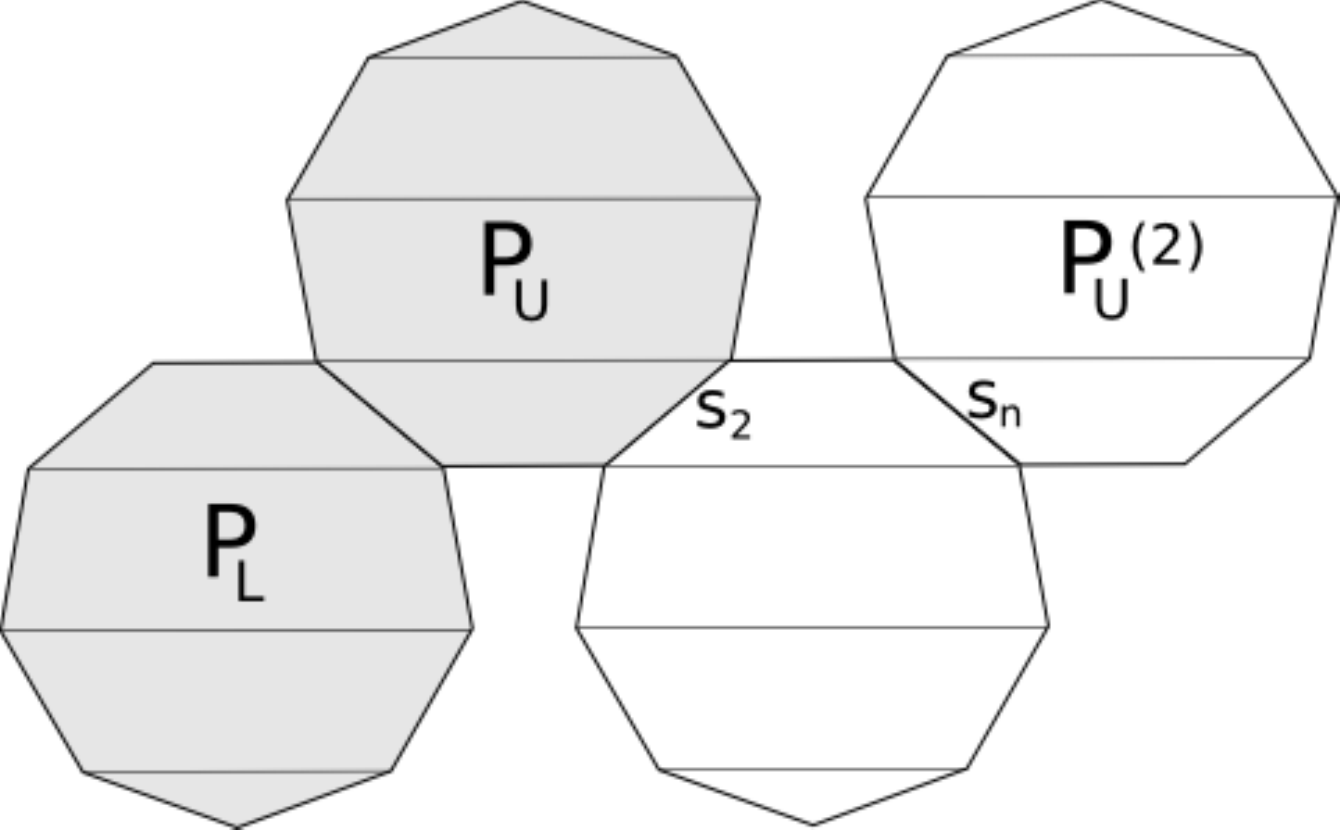}
\begin{quote}\caption{Constructing $P_U^{(2)}$. The original polygons are shaded in grey \label{vggU2}} \end{quote}
\vspace{-2.0em}
\end{figure}

Next, glue a copy of $P_L$ along edge $S_3$ of $P_U^{(2)}$. Then glue a copy of $P_U$ along edge $S_{n-1}$ of the right-most copy of $P_L$, and denote it by $P_U^{(4)}$ (Figure \ref{vggU4}).

\begin{figure}[!h]
\centering
\includegraphics[width=300pt]{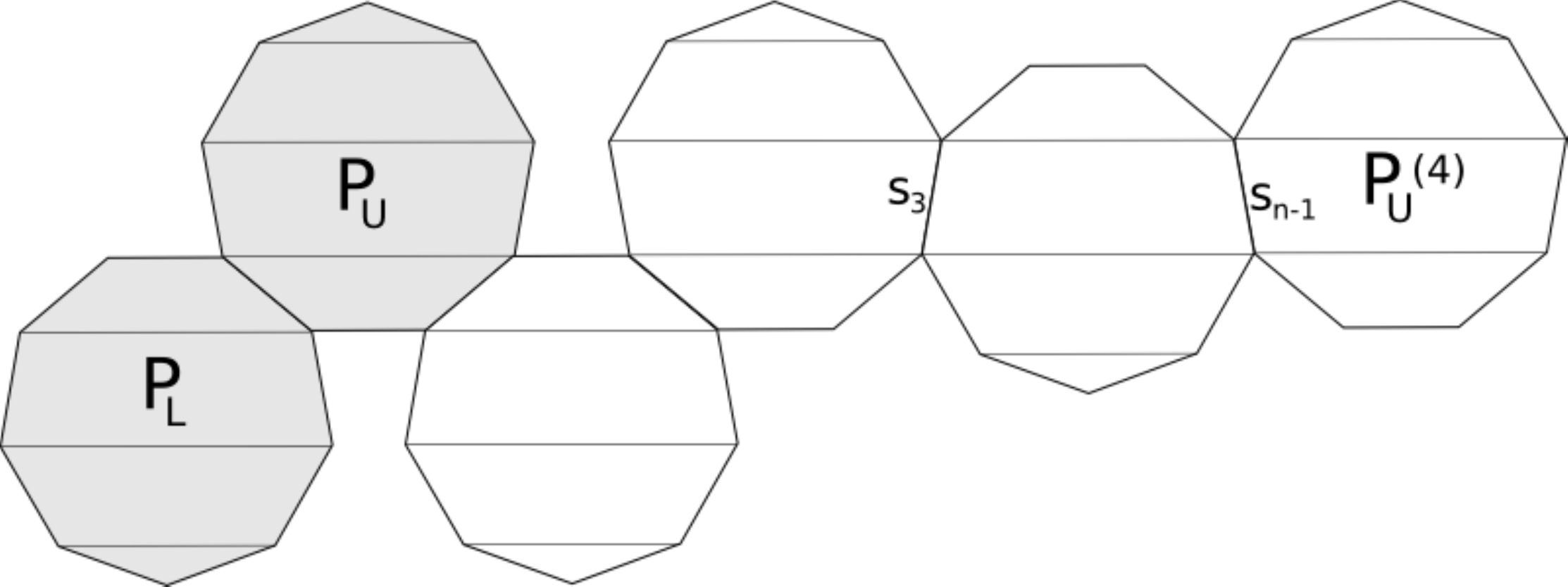}
\begin{quote}\caption{Constructing $P_U^{(4)}$. The original polygons are shaded in grey \label{vggU4}} \end{quote}
\vspace{-2.0em}
\end{figure}

Continue in this manner, gluing a copy of $P_L$ along edge $S_k$ of the right-most copy of $P_U$ and then gluing a copy of $P_U$ along edge $S_{n+2-k}$ of the right-most copy of $P_L$, and denoting it by by $P_U^{(2(k-1))}$. (It is essential to proceed in order, $k=3, 4, \ldots, (n-1)/2$.) This continues until $k=(n-1)/2$ (considering the top vertex of $P_U$ to be a horizontal bar, $U_{n-1}$, of length $0$), at which point we cannot go any further and we stop. The full construction for all the images of $P_U$ for $n=9$ is shown in Figure \ref{vggPUall}.

\begin{figure}[!h]
\centering
\includegraphics[width=430pt]{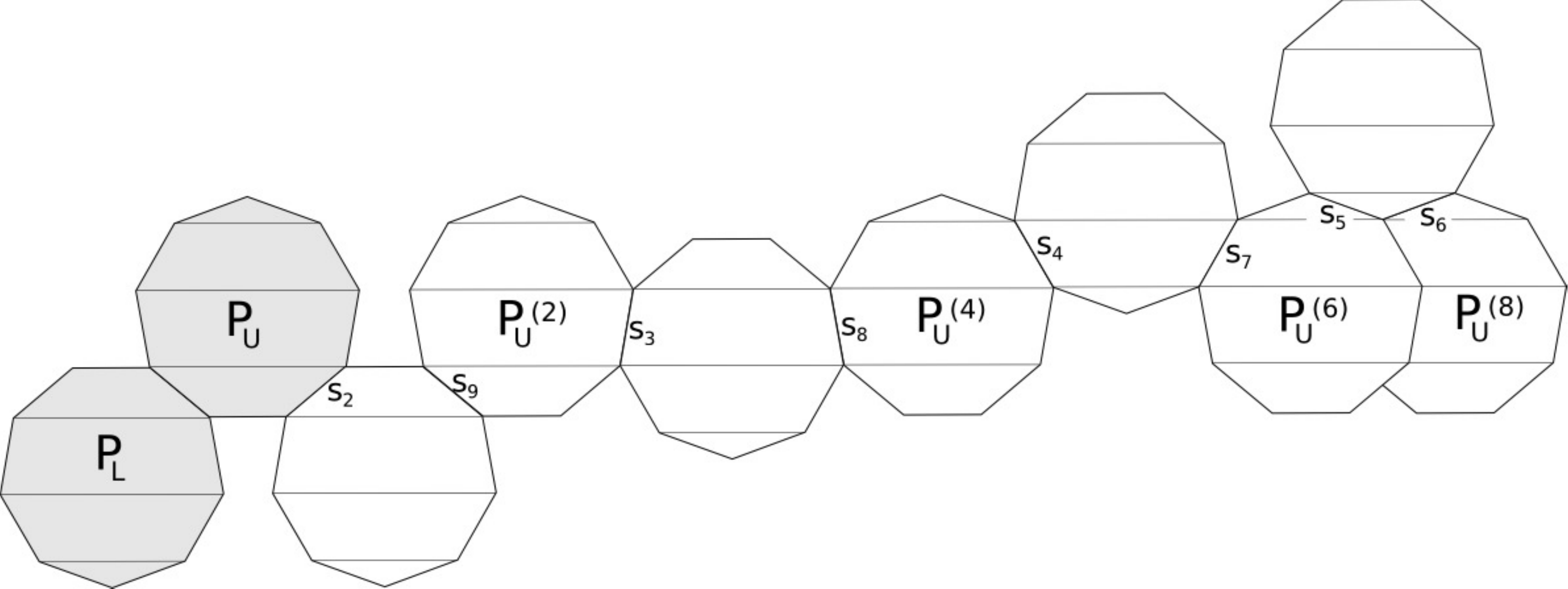}
\begin{quote}\caption{Constructing all the images of $P_U$ \label{vggPUall}} \end{quote}
\vspace{-2.0em}
\end{figure}

\begin{remark} \label{whatabouteven}
The construction is similar for single even-gons. In that case, $P_U$ is a $2n$-gon with edges $S_1,\ldots,S_n,S_1,\ldots,S_n$. $P_L$ is a a copy of $P_U$, so at this point of the construction we are done. If we used a double even-gon, and $P_L$ were considered a different polygon than $P_U$, we would need to continue with the construction of its image, below.
\end{remark}

Now we will construct the images of $P_L$. Bar $L_2$ lies along the $x$-axis, so it is fixed under a horizontal shear.

Glue a copy of $P_U$ along edge $S_3$ of $P_L$. Then glue a copy of $P_L$ along edge $S_{n-1}$ of this right-most copy of $P_U$, and denote it by $P_L^{(4)}$.

Next, glue a copy of $P_U$ along edge $S_4$ of $P_U^{(4)}$. Then glue a copy of $P_L$ along edge $S_{n-2}$ of the right-most copy of $P_U$, and denote it by $P_L^{(6)}$.

Continue in this manner, gluing a copy of $P_U$ along edge $S_k$ of the right-most copy of $P_L$ and then gluing a copy of $P_L$ along edge $S_{n+2-k}$ of the right-most copy of $P_U$, and denoting it by by $P_L^{(2(k-1))}$. (It is essential to proceed in order, $k=3, 4, \ldots, (n-1)/2$.) This continues until $k=(n-1)/2$ (considering the bottom vertex of $P_L$ to be a horizontal bar, $L_{n-1}$, of length $0$), at which point we cannot go any further and we stop. The full construction for all the images of $P_L$ for $n=9$ is shown in Figure \ref{vggPLall}.

\begin{figure}[!h]
\centering
\includegraphics[width=370pt]{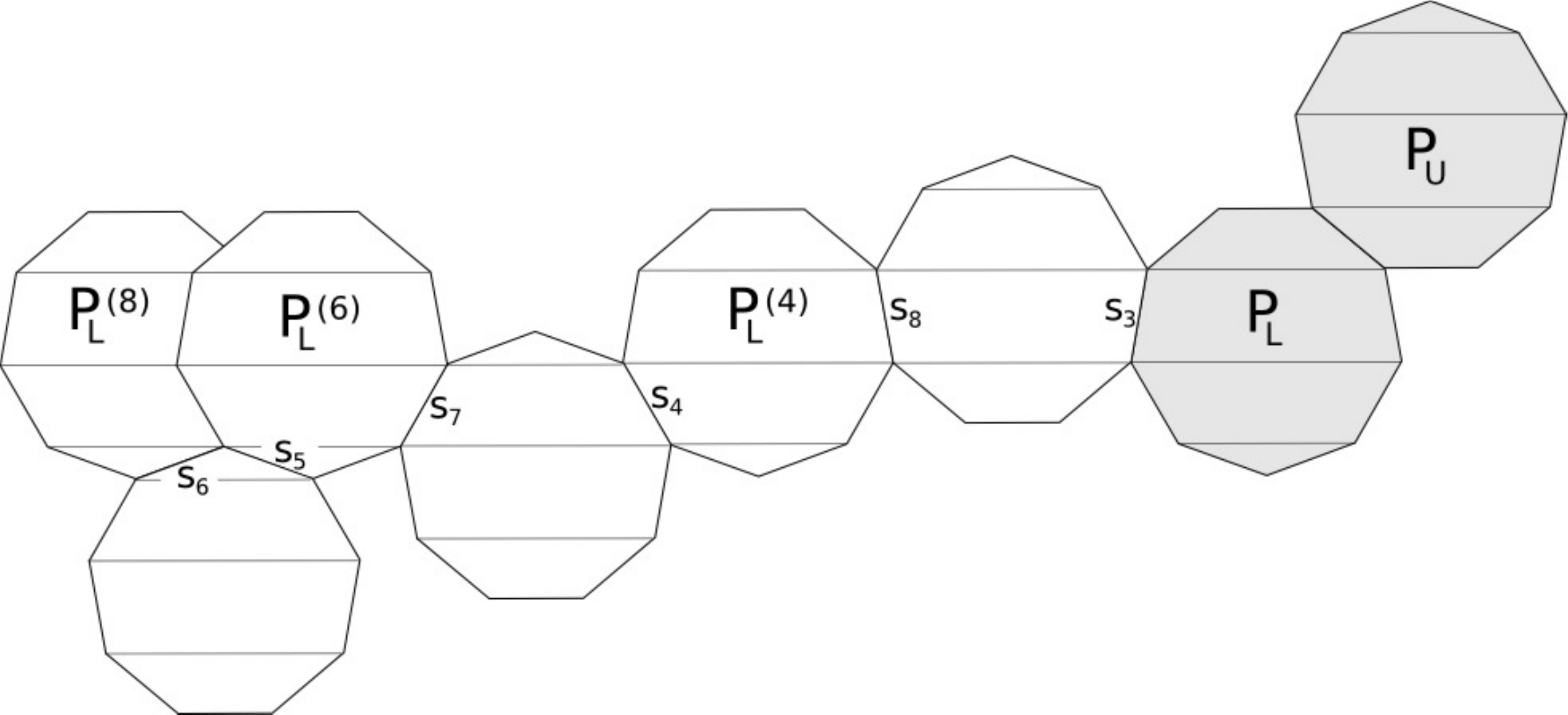}
\begin{quote}\caption{Constructing all the images of $P_L$ \label{vggPLall}} \end{quote}
\vspace{-2.0em}
\end{figure}

Examples of the complete construction for $n=5, 7$ and $9$ are shown in Figures \ref{pentshear}-\ref{nonagonshear} and the construction for $P_U$ is shown for $n=11$ in Figure \ref{11gongrid}.

\begin{definition} \label{vgg}
Denote by $U_k$' the bar in $P_U^{(k)}$ that is contained in the same horizontal line as $U_k$, and denote by $L_k$' the bar in $P_L^{(k)}$ that is contained in the same horizontal line as $L_k$ (Figure \ref{wholevgg}). The \emph{vertex generator guide} is a collection of points in the plane that are the endpoints of the horizontal bars $U_2', U_4', \ldots, U_{n-1}', L_2', L_4', \ldots , L_{n-1}'$ (shown as black dots in Figure \ref{wholevgg}).
\end{definition}

\begin{figure}[!h]
\centering
\includegraphics[width=430pt]{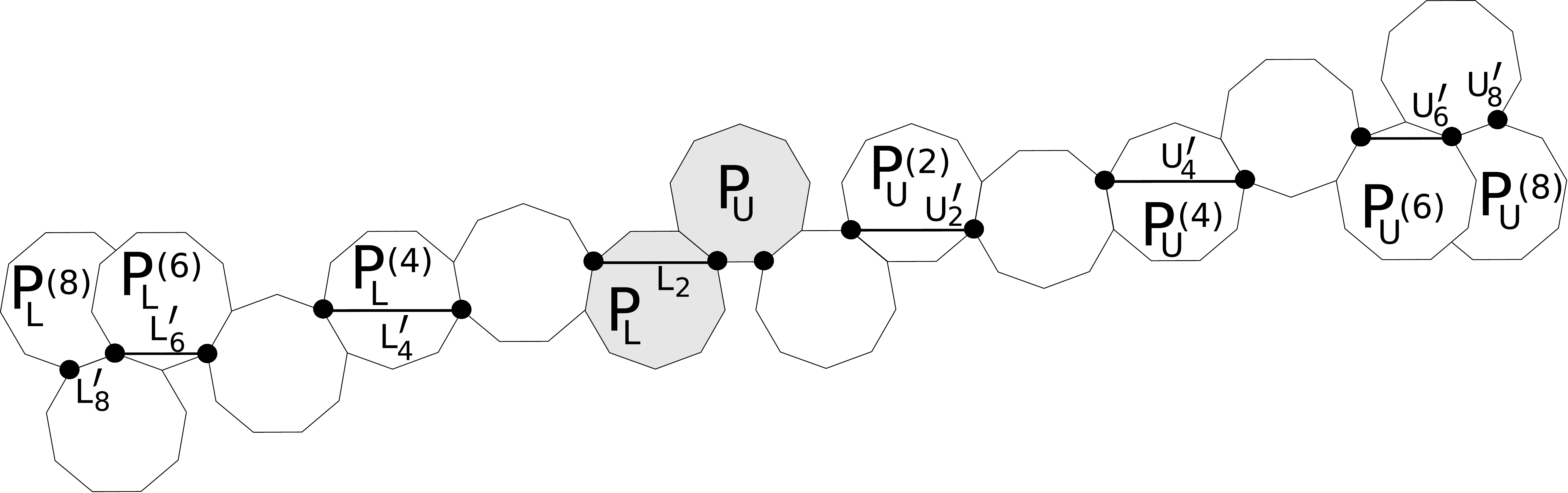}
\begin{quote}\caption{The vertex generator guide (the example shown is for $n=9$) is the collection of points labeled by black dots \label{wholevgg}} \end{quote}
\vspace{-2.0em}
\end{figure}

We have only described how to construct the sheared images of the polygons; we have not proven that this is actually the effect of applying the Veech element $M_n$. We will prove that this method actually constructs the locations of the sheared polygons' vertices in general for all $n$ in Chapter \ref{reassembly}, by writing down the vertices' coordinates and transforming them by the shearing matrix $M_n$.

\section{Derivation and cutting sequences on the double pentagon\label{deriveBECE} \label{veechofpentagon}}

As explained in the introduction, we'll carefully go through the case of the double pentagon, explaining the geometric effect of applying the shearing Veech element on the pentagons, and working out the combinatorial effect on the cutting sequences. We'll use the same method for double $n$-gons in general, so it will be helpful to work out the pentagon case, where we can explicitly draw out all of the pictures and see exactly what is happening. 

As we have seen, since a regular pentagon has no parallel sides, in order to create a surface we have to use two copies of the pentagon, which are reflections of each other, as in Figure \ref{pentagonshear}. An Euler characteristic argument shows that this surface has genus $2$. This is the same as for the octagon: If we glue together one pair of identified sides, we can see that the double pentagon is an equilateral but non-equiangular octagon.

We will call the double pentagon on the left in Figure \ref{pentagonshear} the \emph{standard double pentagon}: the pentagon on the left has the horizontal side $A$ on the top; the pentagon on the right has the slanted sides $D$ and $C$ on the top; and the two pentagons are glued together along side $E$.

We apply the flipping and shearing Veech element to the standard double pentagon (Figure \ref{pentagonshear}). The white pentagon shears to the elongated white pentagon, and the gray pentagon shears to the elongated gray pentagon. The edges shear to the primed edges, with side labels exchanged because of the horizontal flip.

\begin{figure}[!h] 
\centering
\includegraphics[width=430pt]{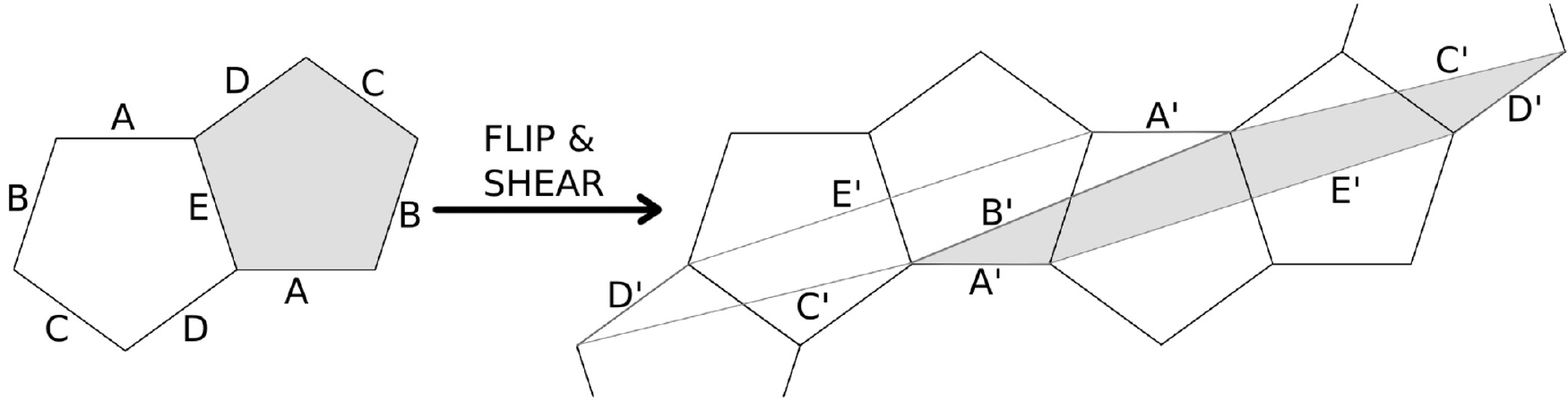}
\begin{quote}\caption{Applying the Veech element to the standard double pentagon\label{pentagonshear}} \end{quote}
\end{figure}

We apply the flipping and shearing Veech element $ \bigl( \begin{smallmatrix} -1&0\\ 0&1 \end{smallmatrix} \bigr) M_5$ to the double pentagon, obtaining the sheared pentagons in Figure \ref{pentagonshear}. The effect of $ \bigl( \begin{smallmatrix} -1&0\\ 0&1 \end{smallmatrix} \bigr)$ is a left-right exchanging of the edge labels, reversing the orientation. To cut up the sheared pentagons and reassemble them back into the standard double pentagon, we cut along the edges of the pentagonal vertex generator guide. We can reassemble them as in Figure \ref{puzzlelocations}. 

\begin{figure}[!h]
\centering
\vspace{-1.0em}
\includegraphics[width=305pt]{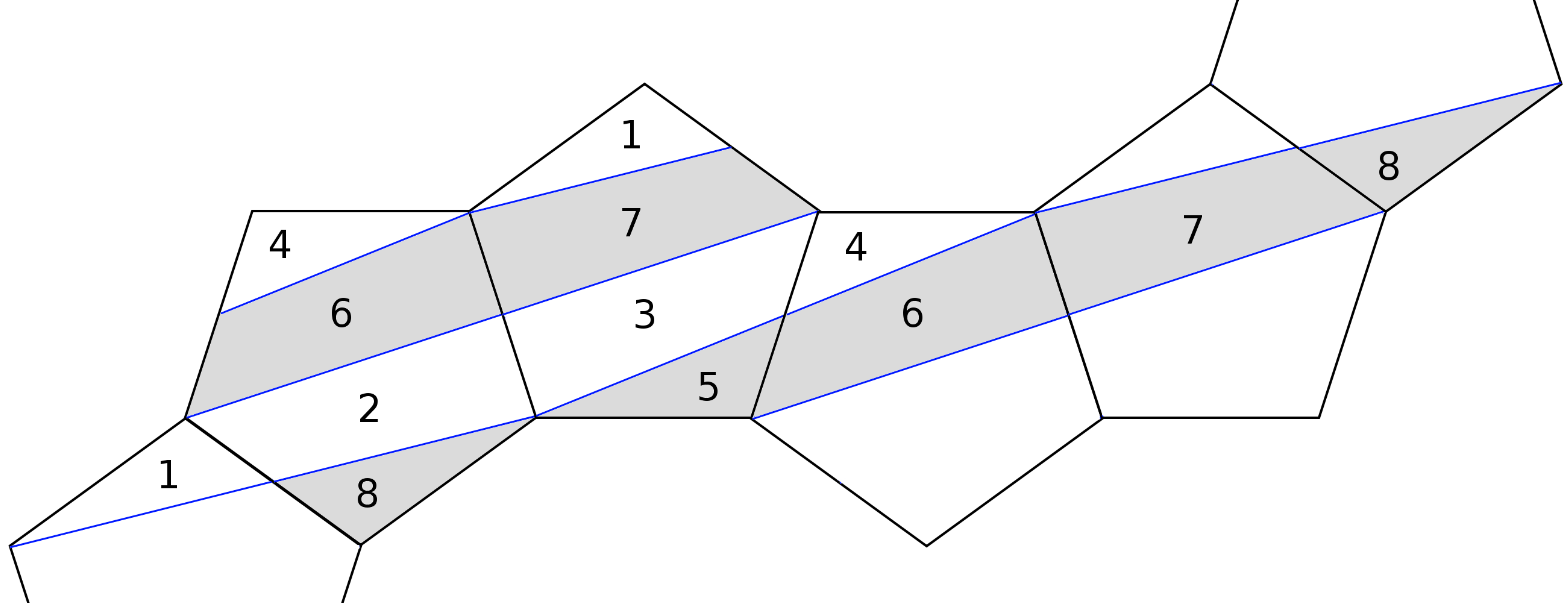}
\begin{quote}\caption{The sheared white and gray pentagons, whose parts are consecutively labeled 1-4 and 5-8, can be cut up and reassembled into a standard double pentagon \label{puzzlelocations}} \end{quote}
\vspace{-1.0em}
\end{figure}

Note that no other cutting is allowed; we may only cut the sheared pentagons along the edges of the regular pentagons that underly the pentagonal vertex generator guide. Also, when we reassemble the pieces, we must translate them; rotations of individual pieces do not preserve the surface. Similarly, we may only cut the regular pentagons along the edges of the sheared pentagons, as the standard double pentagon is cut up here. (We emphasize this point because any polygons of the same area are cut-and-paste equivalent, but here both the cutting and pasting operations are very restricted.)

Now we will explore derivation on the double pentagon, in two different ways. In section \ref{geometricprocess}, we will demonstrate the geometric process of derivation on the double pentagon. Then in section \ref{sequences}, we will show that the combinatorial effect on a cutting sequence $s$ on the double pentagon is $ksl(s)$. We will use the sequence $BECE$ as our example in both sections. The same procedure will generalize to double $n$-gons for all odd $n$, in Chapter \ref{alln}. 

\subsection{The geometric process of derivation \label{geometricprocess}}

This 4-step process illustrates the geometric effect of applying a shearing Veech element to a double pentagon, and to a trajectory on the double pentagon surface.

\vspace{10pt} \noindent $\mathbf{STEP \ 1}$: First, we'll draw our trajectory (in this case, $BECE$) on the standard double pentagon (Figure \ref{second123} (a)). 

\vspace{10pt} \noindent $\mathbf{STEP \ 2}$: We cut up the double pentagon along the primed edges, and reassemble the pieces into the sheared double pentagon (Figure \ref{second123} (b)), as we showed in Figure \ref{puzzlelocations}. We have not changed the slope of the trajectory at all, simply reassembled it. 

\vspace{10pt} \noindent $\mathbf{STEP \ 3}$: We "un-shear" the sheared pentagons: we apply the inverse of the shearing matrix $\bigl( \begin{smallmatrix} 1&2\cot(\pi/5)\\ 0&1 \end{smallmatrix} \bigr)$ to obtain a trajectory on the standard double pentagon (Figure \ref{second123} (c)).

\begin{figure}[!h] 
\centering
\includegraphics[width=360pt]{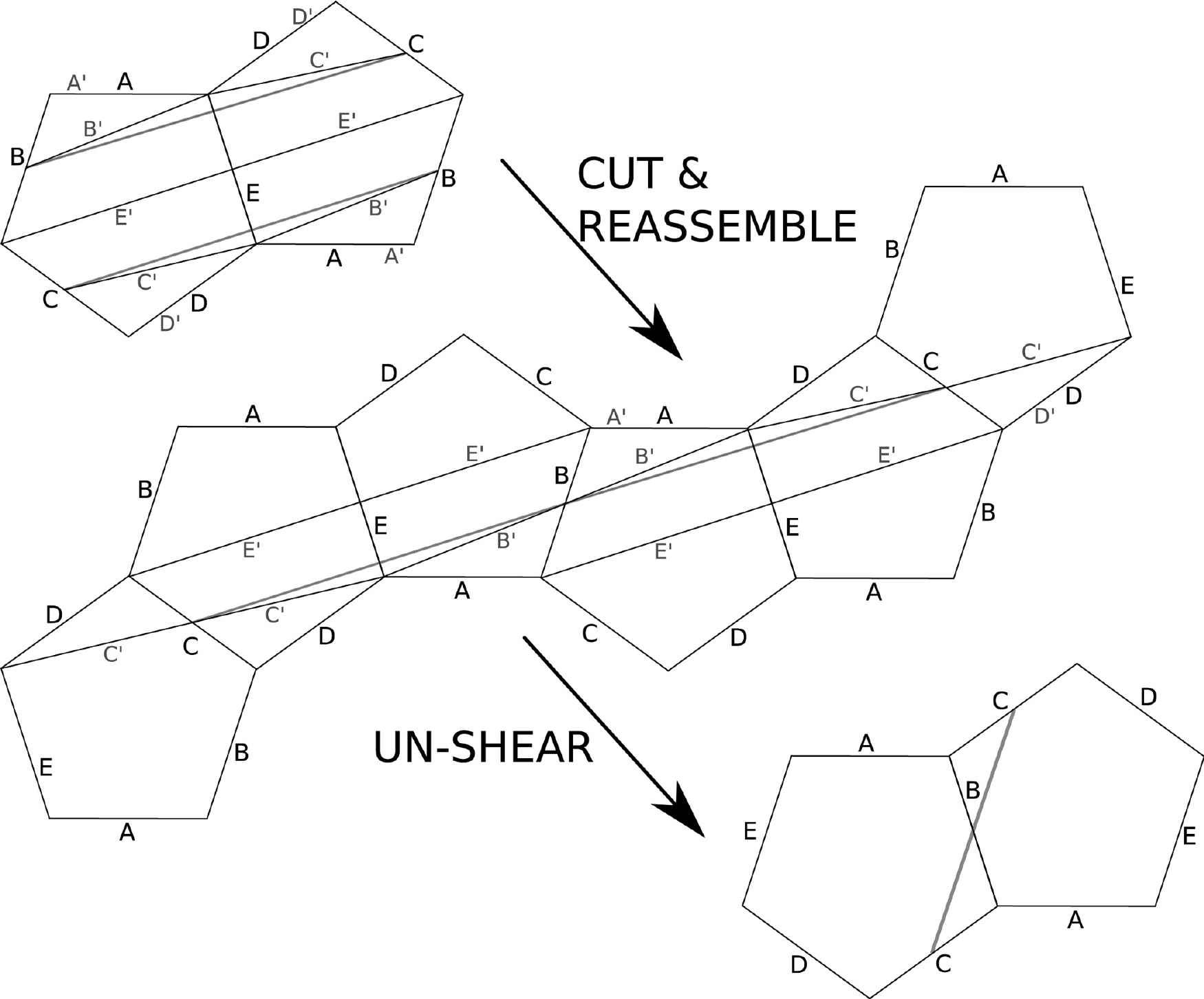}
\begin{quote}\caption{Steps 1-3 of the geometric process of derivation\label{second123}} \end{quote}
\end{figure}

\begin{remark}
To do this in practice, all we do is see where the trajectory crosses the primed edges, and draw the trajectory in the corresponding places on the original edges. In this case, the trajectory goes from the midpoint of side $C'$, to the midpoint of side $B'$, to the midpoint of side $C'$. So on the double pentagon, we simply draw a trajectory from the midpoint of $C$ to the midpoint of $C$, passing through the midpoint of $B$ (Figure \ref{second123} (c) and Figure \ref{second345} (a)). Notice that these pentagons are not in the standard orientation, because while the pentagon on the left has its horizontal edge on top, the shared edge in the middle is $B$, not $E$, and the edges at the top of the pentagon on the right are $C$ and $D$, rather than $D$ and $C$.
\end{remark}

\begin{figure}[!h] 
\centering
\includegraphics[width=430pt]{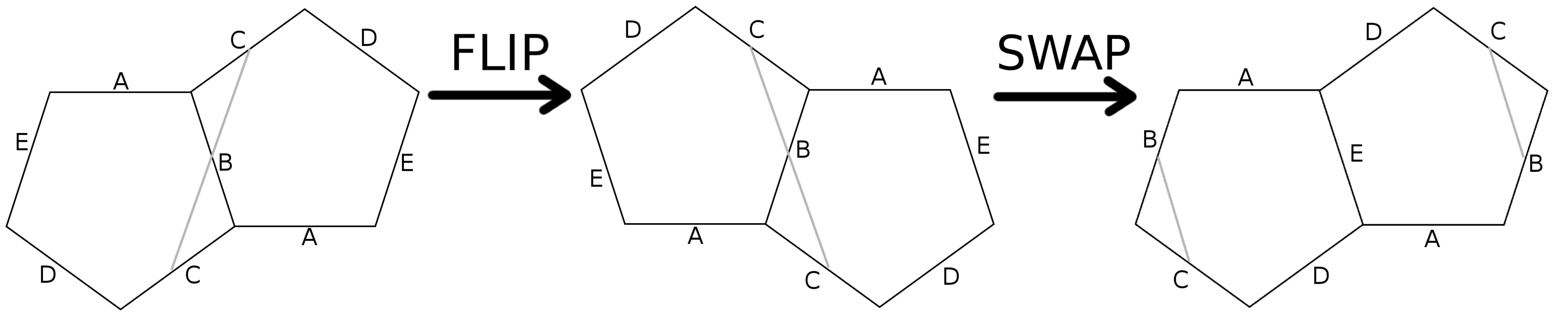}
\begin{quote}\caption{Step 4 of the geometric process of derivation\label{second345}} \end{quote}
\vspace{-2.0em}
\end{figure}

\vspace{10pt} \noindent $\mathbf{STEP \ 4}$: To put this trajectory back onto the standard double pentagon, we perform a horizontal flip (applying the orientation-reversing matrix $\bigl( \begin{smallmatrix} -1&0\\ 0&1 \end{smallmatrix} \bigr)$) and then rearrange the two pentagons to the standard orientation, as in Figure \ref{second345}. This gives us a trajectory corresponding to the sequence BC.

\begin{remark}\label{lookatprimes}
Notice that we didn't need to do the last step in order to see that the derived sequence is $BC$; we could have seen that easily in Figure \ref{second345} (a), because flipping and swapping the pentagons didn't affect the sequence. In fact, we could already see it in Figure \ref{second123}, simply by looking at which "primed" edges the trajectory crossed. It crossed edges $B'$ and $C'$, so the derived trajectory is $BC$. Our process going forward will be to draw in the primed edges on the original double pentagon and see which primed edges our trajectory crosses; this will give us the derived sequence.
\end{remark}

\subsection{The effect on symbolic sequences\label{sequences}}

One can determine that the derivation of the sequence $BECE$ is $BC$ without having to draw out multiple diagrams, as follows: As explained in the introduction, the derivation rule is $ksl(s)$ for a sequence $s$ on any double odd-gon (Theorem \ref{koslalln}); we will prove the case of the double pentagon in this section.

\begin{remark}
In the work that follows, we will assume that all trajectories have a direction $\theta \in [0,2\pi/5)$ (horizontal or positively-sloped, and traveling from left to right). A trajectory at an angle outside this sector can be transformed to a trajectory in this sector by applying an affine automorphism, whose derivative is a rotation matrix, which induces a permutation on the labels of the edges, and thus of the letters in the corresponding cutting sequence.
\end{remark}

We will now prove the Reassembly Theorem (\ref{theyrethesame}) for the special case of the double pentagon surface: \emph{On the double pentagon, the effect of the Veech element $M_5$ on a cutting sequence $s$ is $ksl(s)$.}

\begin{proof}
This proof will have $6$ steps. As noted in Remark \ref{lookatprimes}, it is possible to determine a derived trajectory by drawing the original trajectory on Figure \ref{withedges} and writing down the letters corresponding to the primed edges it crosses. We will determine which primed edges the trajectory crosses by going through the 6 steps, examining at each stage both the geometric effect on the pentagon and trajectory, and the combinatorial effect on the cutting sequence. 

\begin{figure}[!h]
\centering
\includegraphics[width=230pt]{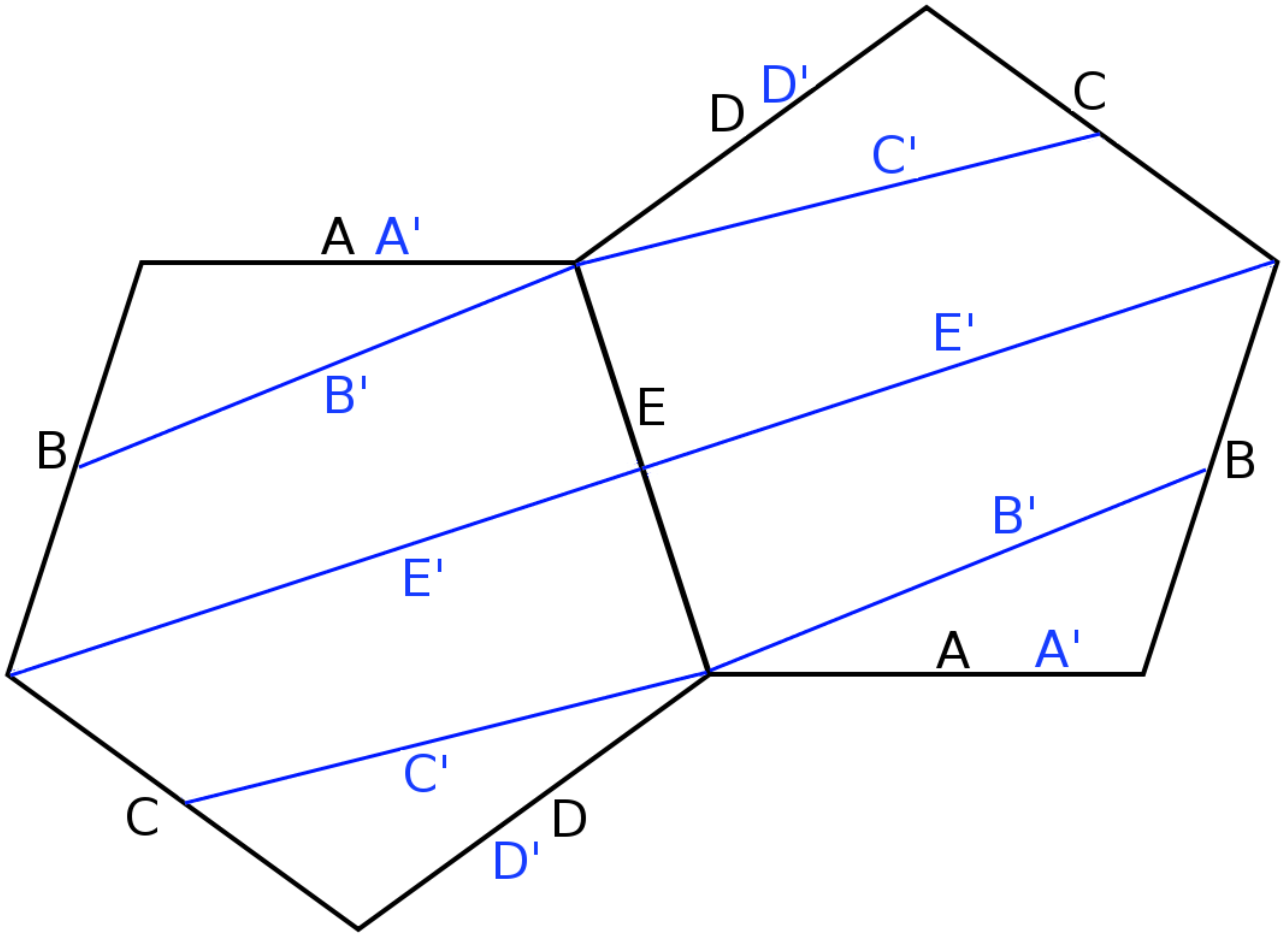}
\begin{quote}\caption{The standard double pentagon with primed edges drawn in\label{withedges}} \end{quote}
\end{figure}

It might appear trivial to determine the derived sequence; after all, one must merely observe which primed edges a trajectory crosses and write them down, and remove the ``primes" to obtain the derived sequence. However, an example illustrates that it is not obvious: 

\begin{example} \label{BEexample}
Suppose a given trajectory contains the two consecutive letters $BE$. The relevant question is whether it crosses edge $B'$ or edge $E'$ (refer to Figure \ref{withedges}). However, without knowing anything about the specific trajectory, it's impossible to know which primed edges it crosses: It might cross both $B'$ and $E'$, or neither. 
\end{example}

We will show how to add "auxiliary edges" that eliminate this uncertainty. The process described below is due to \cite{SU} for the regular octagon; we apply their process to the double pentagon.

\vspace{10pt} \noindent $\mathbf{STEP \ 1}$: First, draw a "transition diagram" of possible two-letter subwords of our sequence (Figure \ref{arrows}). Since the direction of the trajectory $\theta \in [0,\pi/5)$, our options are restricted. If a trajectory starts on side $A$, it can't go anywhere in the light pentagon (since trajectories go up), and on the dark pentagon, it can go to $B$. It can't go to $C$, because that would require $\theta > \pi/5$. So we draw this as a dark arrow from $A$ to $B$. The rest of the diagram is generated in the same way. Let's call this the "arrows" diagram.

\begin{figure}[!h]
\centering
\includegraphics[width=370pt]{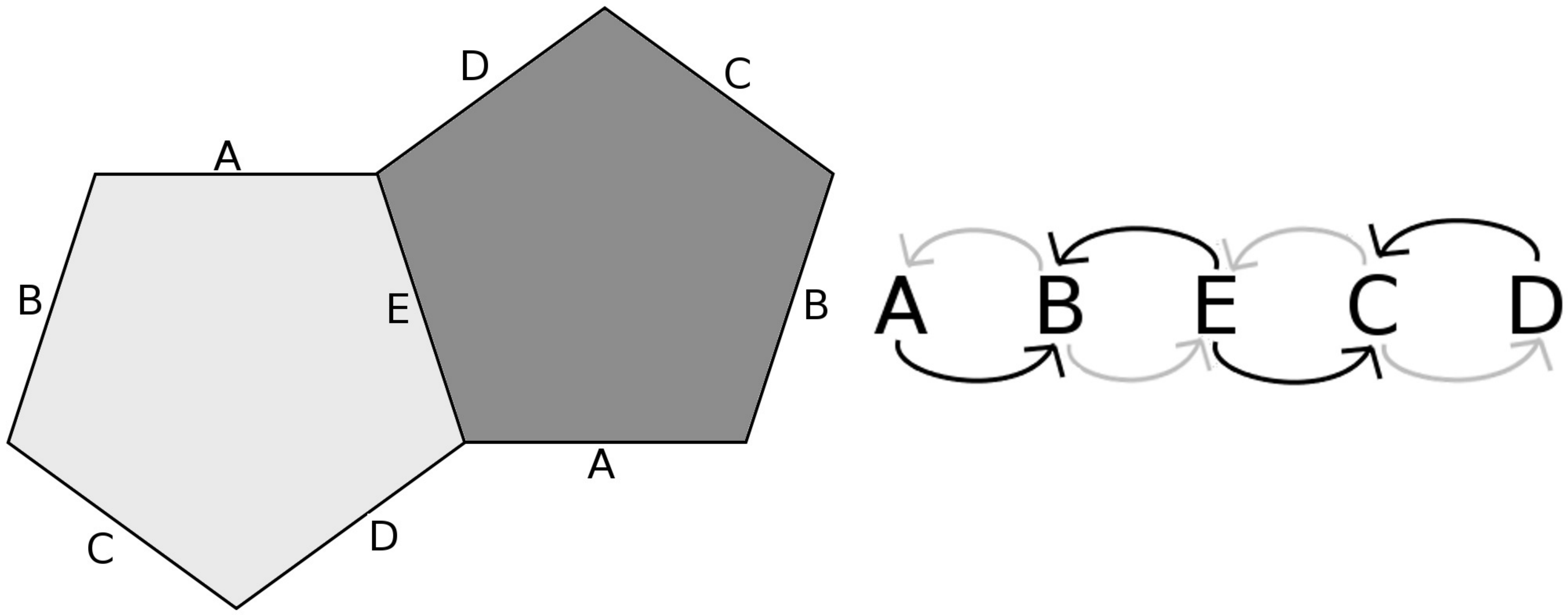}
\begin{quote}\caption{The "arrows" transition diagram for the double pentagon\label{arrows}} \end{quote}
\vspace{-2.0em}
\end{figure}

Now we can see a cutting sequence as a path on the transition diagram, following the arrows from letter to letter. A cutting sequence determines a unique path around the diagram, and a path on the transition diagram determines a unique cutting sequence. Note that any path on the diagram alternates black arrows and gray arrows. From this we can see that every valid cutting sequence on the double pentagon has an even period. We should also note that some paths on the transition diagram, such as $DCDCEC$ (and equivalently $ABABEB$), are not realizable as cutting sequences of trajectories on the double pentagon.

\vspace{10pt} \noindent $\mathbf{STEP \ 2}$: Now we add auxiliary diagonals at angles $\theta = 0,\theta = \pi/5$, labeling them $f$, $g$, $h$, and $i$ (Figure \ref{arrowsletters}). Now (recalling Example \ref{BEexample}), if a given sequence includes $\ldots BE\ldots$, the relevant question is whether its corresponding trajectory crosses edge $g'$, and it is clear that it must. So we label the arrow from $B$ to $E$ with the letter $g$, and similarly for $f$, $h$ and $i$. We can augment our sequence by inserting the letters for the auxiliary diagonals into the sequence itself, so $BECE$ becomes $BgEhCfEi$. A path on the arrows diagram uniquely determines a path on this "augmented arrows" diagram. 

\begin{figure}[!h]
\centering
\includegraphics[width=340pt]{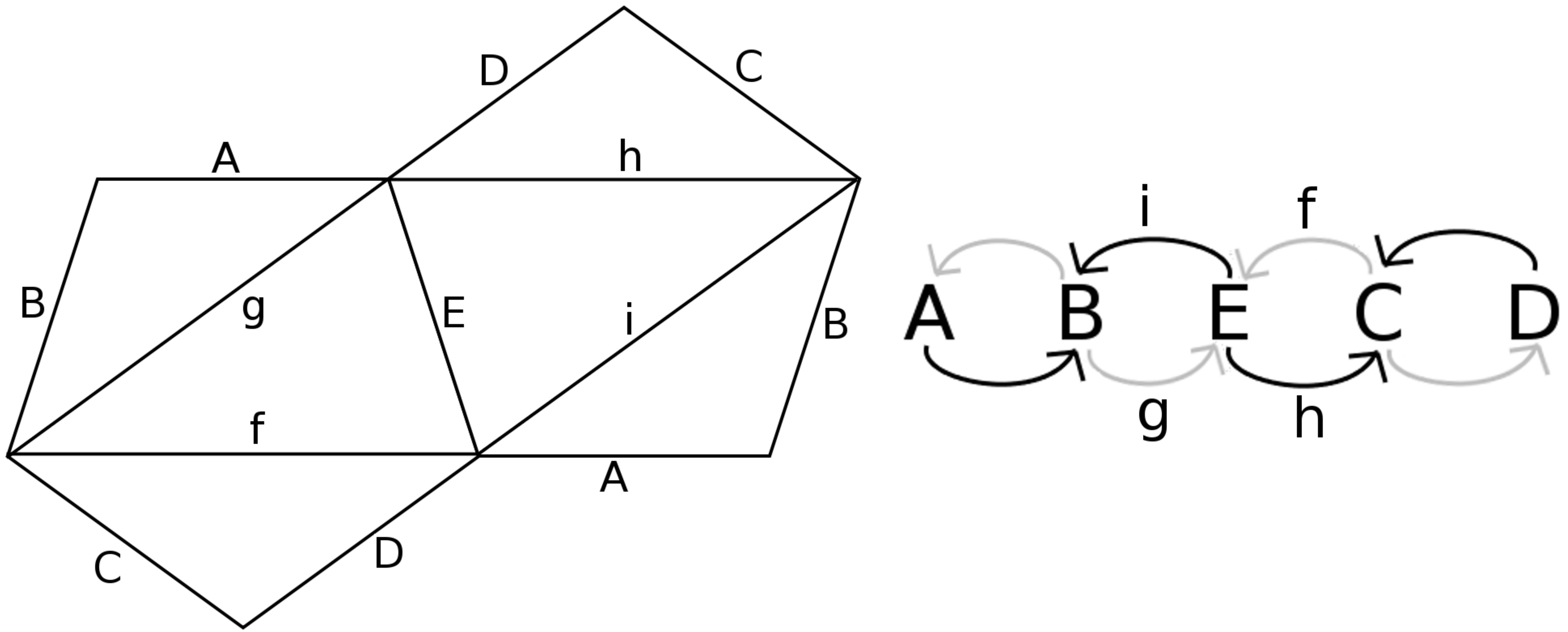}
\begin{quote}\caption{The double pentagon with auxiliary diagonals, and the "augmented arrows" transition diagram\label{arrowsletters}} \end{quote}
\vspace{-2.0em}
\end{figure}

\vspace{10pt} \noindent $\mathbf{STEP \ 3}$: Now we can "take the dual" of this diagram, making $A$, $D$, $f$, $g$, $h$, and $ i$ into the nodes and $B$, $C$, and $E$ into the arrow labels (Figure \ref{dualletters}). Our path on the augmented arrows diagram uniquely determines a path on this "dual" diagram, and our sequence is unchanged: it is still $BgEhCfEi$. Note that we have extra arrows on either side, because to get from side $A$ back to itself the trajectory must pass through $B$, and to get from side $D$ back to itself the trajectory must pass through $C$.

\begin{figure}[!h]
\centering
\includegraphics[width=430pt]{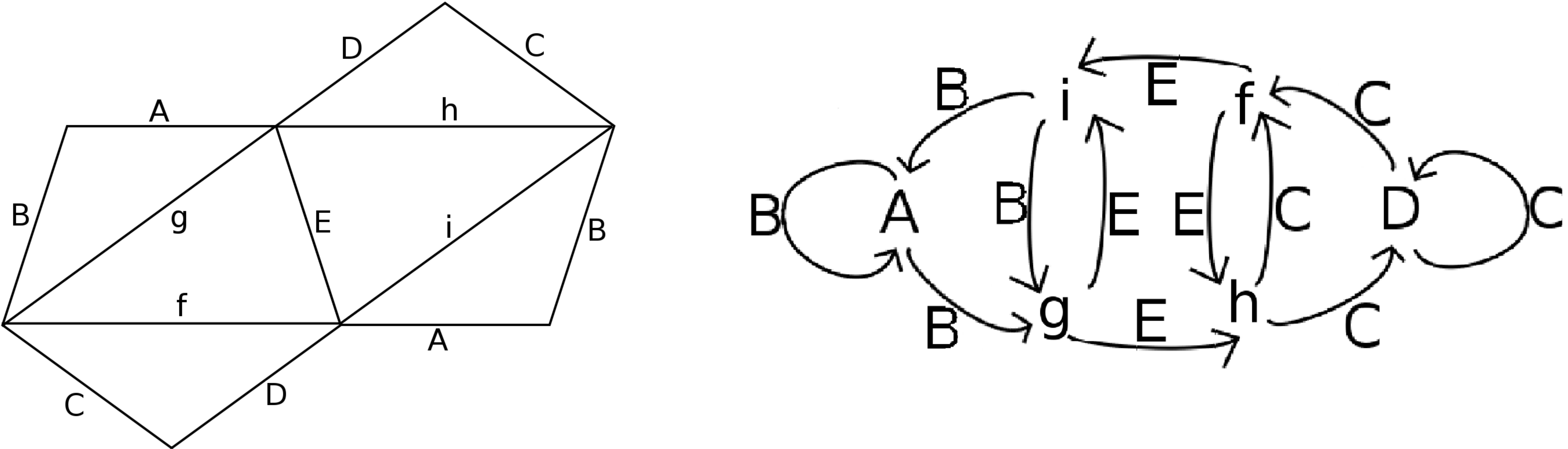}
\begin{quote}\caption{The "dual" transition diagram, with the sequence unchanged \label{dualletters}} \end{quote}
\vspace{-2.0em}
\end{figure}

\vspace{10pt} \noindent $\mathbf{STEP \ 4}$: Our goal is to end up with just the primed edges that our trajectory crosses. First, we eliminate all of the edges except those at $\theta = 0$ ($A$, $f$ and $h$) and at $\theta = \pi/5$ ($D$, $g$ and $i$). In the transition diagram, we keep only the nodes, which are the same six letters, and the arrows (Figure \ref{arrowsempty}). A path on the dual diagram corresponds uniquely to a path on this ``unlabeled" diagram. In the sequence, we keep only those six letters, so our sequence is now $ghfi$.

\begin{figure}[!h]
\centering
\includegraphics[width=350pt]{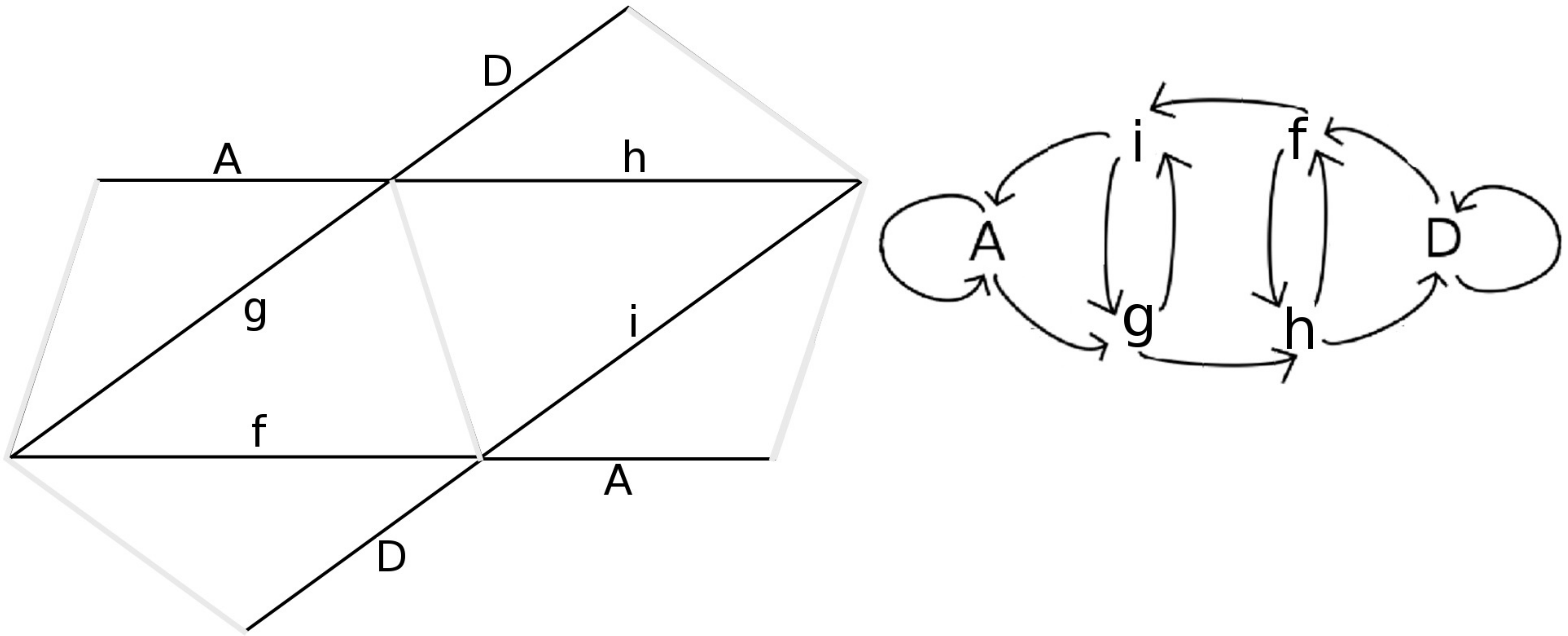}
\begin{quote}\caption{The double pentagon with only those edges at $\theta = 0$ and $\theta = 2\pi/5$, and the ``unlabeled" transition diagram with B, C, E deleted \label{arrowsempty}} \end{quote}
\vspace{-2.0em}
\end{figure}

\vspace{10pt} \noindent $\mathbf{STEP \ 5}$: Finally, we are ready to determine which primed edges our trajectory crosses, which will give us the derived sequence. We draw the pentagons with edges $A, D, f, g, h, i$ from before, and now add in the primed edges $A', B', C', D', E'$ (the outlined pentagons in Figure \ref{adjacency}). Note that side $A = A'$ and side $D = D'$, so we only label those sides with the primed letters, which eliminates the non-primed edges completely. 

Now we can label the arrows in our transition diagram with the primed edges that are crossed to get from one node to another. For instance, to get from edge $g$ to edge $h$, no other edges are crossed, so the arrow is unlabeled. To get from edge $f$ to edge $h$, we cross edge $E'$. The rest of the arrows can be labeled or left empty in the same manner. To do this, it is helpful to draw out enough pentagons so that each of the primed edges appears as a diagonal in some parallelogram (Figure \ref{adjacency}). 

\begin{figure}[!h]
\centering
\includegraphics[width=430pt]{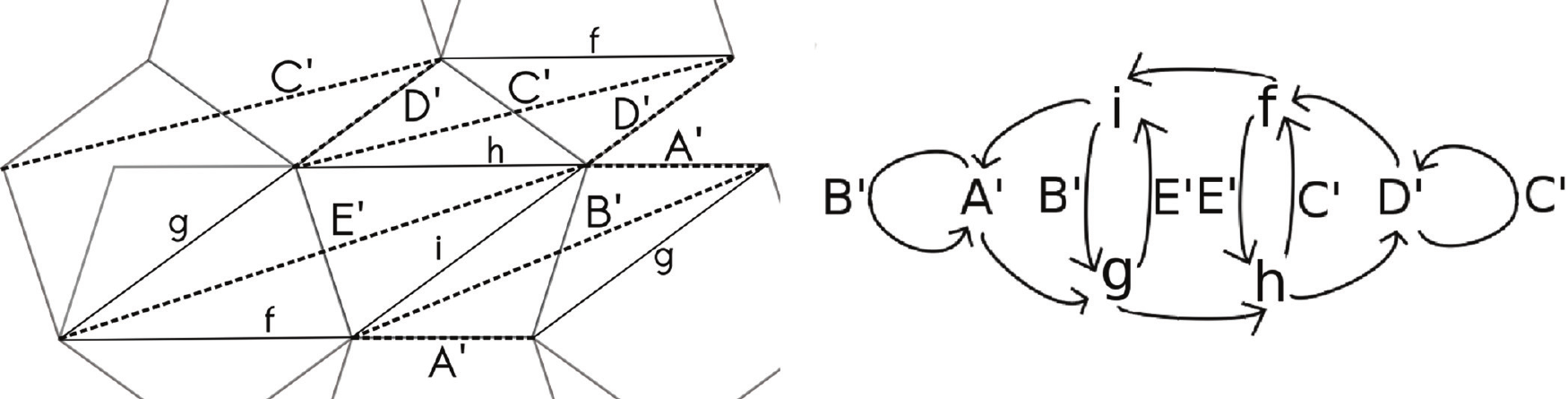}
\begin{quote}\caption{The pentagons with only auxiliary and primed edges, and the "primed" transition diagram  \label{adjacency}} \end{quote}
\end{figure}

A path on the ``unlabeled" diagram uniquely determines a path on the ``primed" diagram, since we haven't changed the diagram at all, only the labels. In our sequence, between each $i$ and $g$ we insert a $B'$; between each $h$ and $f$ we insert a $C'$, and so on, so our sequence $ghfi$ becomes $ghC'fiB'$. (It helps to write the sequence twice, so that it is clear that the $i$ at the end is matched with the $g$ at the beginning and thus needs a $B'$ between.)

To find the derived sequence, simply eliminate all of the auxiliary letters $f, g, h, i$ from the sequence. Our example sequence becomes $C'B'$, and then remove the primes, resulting in $CB$. Since the sequence is periodic, $CB = BC$, so this is the same as what we obtained when we used the sequence of pictures, when we found that the derived sequence was $BC$.

\vspace{10pt} \noindent $\mathbf{STEP \ 6}$: To complete the proof, we must see that the only letters left are the sandwiched letters. To do this, look at the dual diagram and the primed diagram together (Figure \ref{dualprimes}). If we trace out a path on the dual diagram and write down the corresponding sequence, and then trace out the same path on the primed diagram, it is clear that that the letters that are kept are exactly those that are sandwiched.
\end{proof}

\begin{figure}[!h]
\centering
\includegraphics[width=400pt]{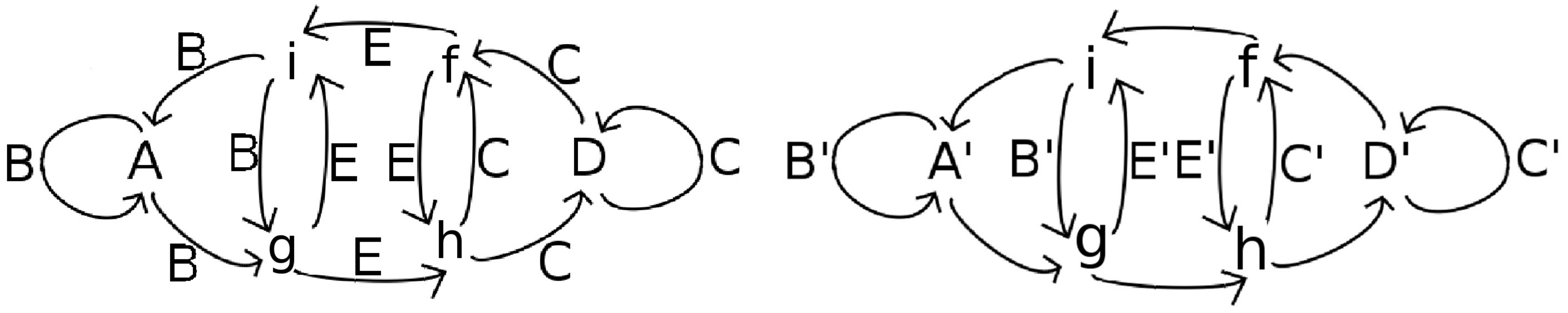}
\begin{quote}\caption{The letters we keep from the dual diagram to the primed diagram are the sandwiched letters\label{dualprimes}} \end{quote}
\vspace{-2.0em}
\end{figure}
\begin{remark}
This is exactly the same diagram as \cite{SU} found for the octagon (see \cite{SU}, Figure 15), with different labels. For double odd-gons with $n>5$, we get the same diagram except extended horizontally, as in the next section. 
\end{remark}

\subsection{Going up one step, to the heptagon \label{heptagonsection}}

In Chapter \ref{alln}, we prove this result for all double regular $n$-gons with odd $n$. Before explaining the general proof, we give another concrete example; it is instructive to examine what happens when we go up just one step, to the heptagon.

\begin{example}
For $n=7$, we can draw out the two heptagons, with primed edges and auxiliary edges, labeling the edges $A$, $B$, $\ldots$, $G$. When we draw out the transition diagram, it is similar to that of the pentagon, except that there are four sets of vertical arrows in the center of the diagram, rather than only two (Figure \ref{heptagon}). Once again, when we pass from the regular letters to the primed letters, it turns out that we eliminate the labels on the top and bottom of the transition diagram, and the labels in the middle row of the diagram become "primed." 
\end{example}

\begin{figure}[!h]
\centering
\includegraphics[width=420pt]{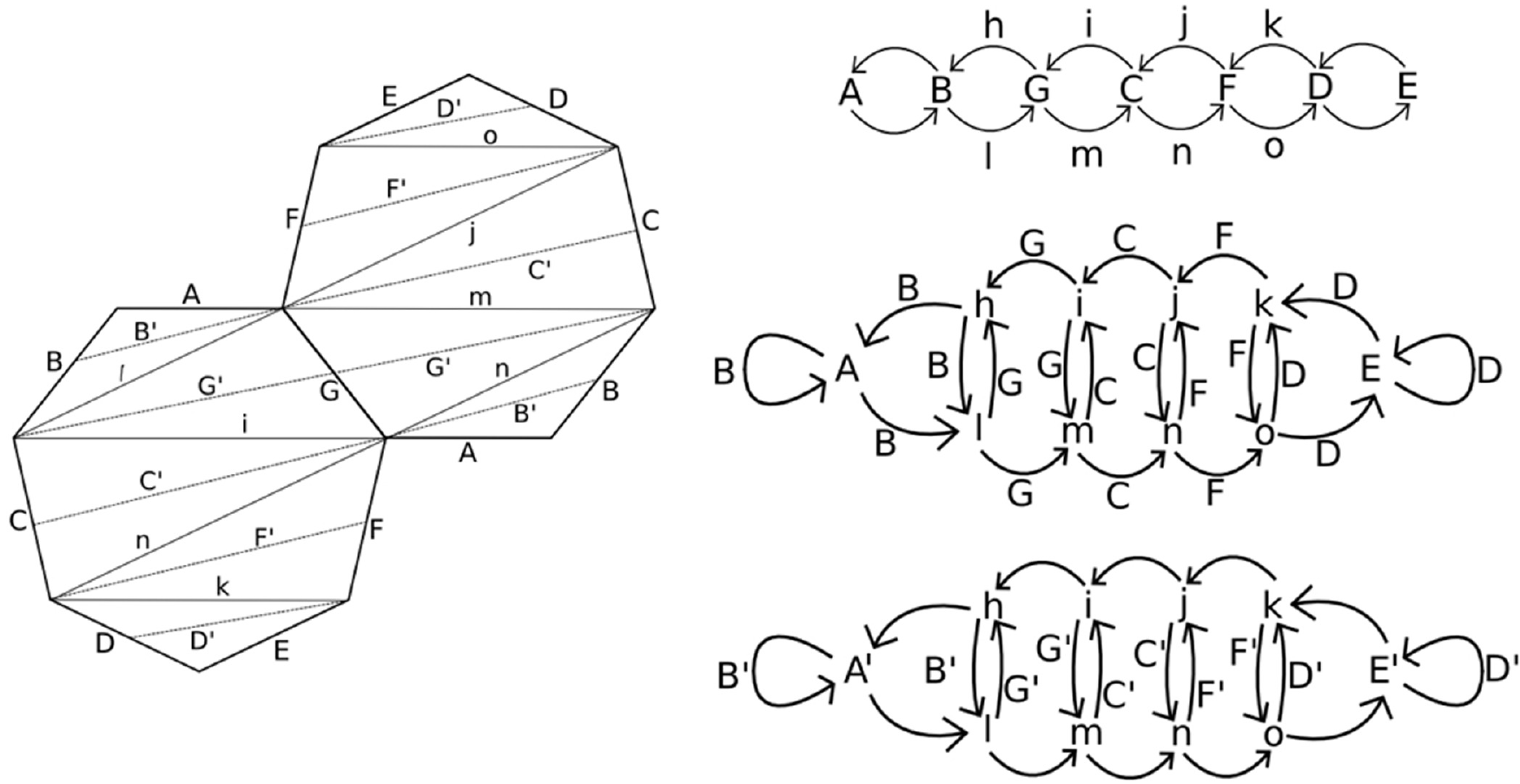}
\begin{quote}\caption{The double heptagon with auxiliary and primed diagonals drawn in, and the three transition diagrams. This is analogous to Figures \ref{arrowsletters}, \ref{adjacency} and \ref{dualprimes} for the pentagon \label{heptagon}} \end{quote}
\vspace{-2.0em}
\end{figure}

The pattern that we have observed in the cases of the double pentagon and heptagon also holds for all double odd-gons:

\begin{lemma} \label{sandwichproperty}
Whenever the labels on the top and bottom of the transition diagram (on the left- and right-facing arrows) are eliminated, and those on the middle row are kept, the effect on a cutting sequence $s$ is $ksl(s)$.
\end{lemma}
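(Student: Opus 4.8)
The plan is to reduce the lemma to a single combinatorial assertion about paths in the transition diagram and then read that assertion off from the diagram's layered shape. Recall that after the auxiliary diagonals are inserted and the diagram is dualized, the augmented cutting sequence of $s$ is exactly a bi-infinite path in the ``dual'' diagram, its original letters riding on the arrows and the inserted auxiliary and corner letters sitting on the nodes between them. Passing to the ``primed'' diagram changes neither the graph nor the path, only the arrow labels: by hypothesis, precisely the arrows of the middle row retain a (primed) label, while the left- and right-facing arrows of the top and bottom rows lose theirs. Reading off the surviving labels along the path, the derived sequence is therefore the subsequence of $s$ consisting of exactly those letters traversed along a middle-row arrow, with the primes removed. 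Hence it suffices to prove the combinatorial statement: \emph{a letter $x$ of $s$, with immediate predecessor $w$ and successor $y$, is carried by a middle-row arrow if and only if $w=y$.}

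This is exactly what Step 6 and Figure \ref{dualprimes} establish for the double pentagon: overlaying the dual and primed diagrams and tracing any path, one sees directly that, for every middle-row arrow $p\xrightarrow{x}q$, the label forced on the arrow entering $p$ equals the label forced on the arrow leaving $q$ (that common letter being $w=y$), whereas for a top or bottom arrow — say the right-facing arrow leaving a top node $t$, whose in-arrow is the matching left-facing arrow and whose onward arrow runs back into the middle row — one gets $w\neq y$. The point to carry through is that this local bookkeeping is insensitive to $n$. As the heptagon case in Section \ref{heptagonsection} illustrates, and as is pinned down in general in Chapter \ref{alln}, the diagram for a double odd-gon is the pentagon's diagram with the middle row merely lengthened (more auxiliary diagonals at the two extreme directions $\theta=0$ and $\theta=\pi/n$, and correspondingly more vertical arrows in the center); the top and bottom nodes still hang off the middle row, each entered by one left-facing and left by one right-facing arrow, and the black/gray alternation still forces every excursion off the middle row to be a single in-and-out step. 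So near any arrow the path realizes one of the finitely many local configurations already seen for $n=5,7$, and the trace-the-two-paths argument applies verbatim, yielding the claimed equivalence in each case.

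Combining the reduction with the equivalence, the derived sequence keeps precisely the letters $x$ of $s$ with $w=y$, i.e.\ precisely the sandwiched letters, so the effect of the Veech element on $s$ is $ksl(s)$. I expect the main obstacle to be the uniformity in $n$: the argument needs the precise combinatorial description of how many top and bottom nodes occur, where on the middle row they attach, and which edges label the left-facing, right-facing, and middle arrows — in short, that the diagram really is ``the pentagon diagram stretched horizontally'' with no new local patterns. That description is the content supplied by Chapter \ref{alln}; granting it, the case analysis is routine. The only delicate points are the two ends of the middle row, where the corner edges $A$ and $D$ behave like length-zero bars and carry the extra self-return arrows noted in Section \ref{deriveBECE} and must be checked to fit the same pattern, and keeping track of the black/gray alternation so that each excursion is indeed a single step.
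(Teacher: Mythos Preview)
Your proposal is correct and takes essentially the same approach as the paper: reduce to the local equivalence ``$x$ lies on a middle-row arrow iff its neighbors coincide,'' then argue that the general diagram is the pentagon/heptagon diagram stretched horizontally so only finitely many local configurations occur. The paper packages this as a clean induction---the base case $n=5$ is one central square, each step adds two squares, and since sandwiching depends only on length-$3$ fragments it suffices to verify the twelve possible transitions across three consecutive central squares (done explicitly with the heptagon labels)---which makes the lemma self-contained rather than forward-referencing the Chapter~\ref{alln} diagram description you lean on; but the content is the same.
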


\begin{proof}
(By induction.) This can be easily checked for the base case $n=5$, the pair of transition diagrams in Figure \ref{dualprimes}, which has only one central ``square" (with corners $f$, $g$, $h$, $i$). 

Going from $n$ to $n+1$, the effect on the transition diagram is to add two additional central squares, so the problem reduces to showing that this property holds on the central squares. Since the sandwiched property is only applied to sequence fragments of length $3$, the problem reduces to showing this on a block of three consecutive central squares. This is also easily checked: Using the labels on the three central squares in Figure \ref{heptagon}, we can confirm that the middle letter is not kept for the sequence fragments $BGC$, $GCF$, $CFD$, $DFC$, $FCG$ and $CGB$, and that it is kept for $GBG$, $CGC$, $FCF$, $FDF$, $CFC$ and $GCG$, confirming the sandwiching rule. 

We checked this for the particular edge labels in the pentagon and heptagon transition diagrams, but clearly the result is the same for any congruent diagram, no matter what edge labels the arrows are given.
\end{proof}

\section{The Reassembly Theorem \label{reassembly}}

We will show that when a polygon, or pair of polygons, is sheared under an application of the Veech element, the result is exactly what we described in Chapter \ref{vertexguide}, and in particular that the images of the polygon's vertices are as we constructed them in Definition \ref{vgg} of the vertex generator guide. 

\begin{theorem} \label{theyrethesame} $\mathbf{(Reassembly)}$
The effect of applying the shearing Veech element to a double odd-gon is to perform a single twist on each of its cylinders, and the effect on the polygons in the plane is to horizontally translate their vertices to corresponding locations on the vertex generator guide.
\end{theorem}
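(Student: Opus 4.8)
The plan is to verify the claim by explicit coordinate computation, following exactly the structure laid out in Chapter \ref{vertexguide}. The Reassembly Theorem has two parts: (i) the shear $M_n$ performs a single twist on each cylinder, and (ii) the vertices of the sheared double $n$-gon land on the vertex generator guide as constructed in Definition \ref{vgg}. Part (i) is essentially immediate from Proposition \ref{veechelt}: we showed there that every cylinder in the double $n$-gon has modulus $2\cot(\pi/n)$, which is precisely the off-diagonal entry of $M_n$, so the shear acts on each cylinder as exactly one Dehn twist. The real content is part (ii).

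For part (ii), I would set up coordinates as in Chapter \ref{vertexguide}: place $P_U$ with its horizontal base edge $S_1$ having endpoints $(0,0)$ and $(1,0)$, with $P_L$ the reflection across $S_n$. The first step is to write down the coordinates of all vertices of $P_U$ and $P_L$ explicitly using the standard parametrization of a regular $n$-gon with unit edges (sums of $(\cos k\theta, \sin k\theta)$-type terms with $\theta = 2\pi/n$). Since $M_n$ fixes $y$-coordinates and shifts $x$ by $2\cot(\pi/n)$ times the height, I then apply $M_n$ to each vertex and obtain closed-form expressions for the image coordinates. The second step is to compute, independently, the coordinates of the endpoints of the horizontal bars $U_2', U_4', \ldots, U_{n-1}'$ and $L_2', \ldots, L_{n-1}'$ in the vertex generator guide. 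These are gotten by tracking the translations accumulated in the iterative gluing construction: each $P_U^{(2(k-1))}$ is obtained by gluing a $P_L$ along $S_k$ and then a $P_U$ along $S_{n+2-k}$, so its defining translation vector is a sum of edge vectors of the regular $n$-gon, which I can evaluate in closed form. The third step is to match: show that the image under $M_n$ of each vertex of the double $n$-gon at height $h$ coincides with the corresponding endpoint of the bar at that height in the vertex generator guide. This amounts to a trigonometric identity equating two sums of cosines, and this is exactly where Lemmas \ref{identities} and \ref{identitysum} come in — the telescoping identity $\cot(\theta/2)\sin(k\theta) = 1 + 2\cos\theta + \cdots + 2\cos((k-1)\theta) + \cos(k\theta)$ is precisely the relation that converts ``height times $2\cot(\pi/n)$'' into the horizontal displacement accumulated by the bar construction.

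**The main obstacle** I expect is the bookkeeping in the second step: correctly identifying, for each horizontal level of the double $n$-gon, which bar $U_k'$ or $L_k'$ it maps to, and keeping the indices straight through the $k=3,4,\ldots,(n-1)/2$ iteration (including the degenerate ``length-$0$ bar'' conventions $U_{n-1}, L_{n-1}$ for the apex vertices). The iterative construction glues alternately along $S_k$ and $S_{n+2-k}$, and one must check that the net translation vector after $j$ steps, when projected appropriately, produces exactly the partial sum $\sum_{i=1}^{k}(\text{something})\cos(i\theta)$ that Lemma \ref{identitysum} handles. I would organize this by first treating the $P_U$ images and the upper bars, then noting that the $P_L$ images are handled by an entirely symmetric argument (reflection across $S_n$, with $L_2$ pinned to the $x$-axis).

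Once the vertex correspondence is established, the theorem follows: the sheared polygon is the convex (or edge-to-edge) hull of its sheared vertices, these vertices lie on the vertex generator guide by the computation, and the cut-and-reassemble picture of Chapter \ref{vertexguide} is then just the statement that the sheared polygon decomposes along the underlying polygonal grid into pieces that translate back to the standard double $n$-gon — which is guaranteed because the grid is built from copies of $P_U$ and $P_L$ and the side identifications are respected, exactly as verified by hand for the pentagon in Figures \ref{puzzlelocations} and \ref{parallelogramcheck}.
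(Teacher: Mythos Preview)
Your proposal is correct and follows essentially the same approach as the paper: compute the images of the vertices under $M_n$ (the paper does this in Proposition~\ref{shearngons}, using Lemma~\ref{identitysum} exactly as you anticipate), compute the vertex generator guide coordinates independently by tracking the accumulated translations of the iterative gluing (Propositions~\ref{geometric} and~\ref{lowerngon}, via the ``snaking'' argument you describe), and then observe that the two closed-form expressions coincide. Your identification of the bookkeeping in the iterative construction as the main obstacle is accurate, and the paper handles it just as you outline---treating the upper $n$-gon first and then the lower $n$-gon by a symmetric argument.
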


\begin{remark}
A similar result is true for regular polygons with an even number of edges, in which case a single polygon is typically used, rather than a double polygon. In the case of a single odd-gon, some cylinders are twisted once and some twice. The vertex generator guide for even-gons is similar to that of odd-gons; see Remark \ref{whatabouteven}.
\end{remark}

\noindent $\mathbf{Outline \  of  \ Proof.}$
First, we will find the images of the polygons' vertices under transformation by the Veech element (Proposition \ref{veechelt}). Then, we will construct the vertex generator guide for a generalized regular odd-gon and find the coodinates of the sheared vertices (Proposition \ref{geometric}). Finally we will show that the two calculations agree: the sheared polygons' vertices from the geometric construction of the vertex generator guide agree with the algebraic result from the Veech element calculation.

\begin{definition}
Denote the exterior angle of a regular $n$-gon by $\alpha$, so $\alpha = 2\pi/n$ (and $\alpha/2 = \pi/n$). We will assume that in any particular situation, it will be clear which regular polygon we are using, so we can simply call the angle $\alpha$ without reference to $n$. 
\end{definition}

\begin{definition}
Consider the polygon with a horizontal edge at the bottom (the ``upper $n$-gon"). The left endpoint of the horizontal edge is the ``$0$th left-side point", and its right endpoint is the ``0th right-side point." Moving up along the left side of the polygon, the vertices are the 1st, 2nd, etc. left-side points of the upper $n$-gon, and similarly for the points on the right side. The top vertex is equivalently either the $(n-1)/2$th left-side or right-side point.

For the polygon with a horizontal edge at the top (the ``lower $n$-gon"), the left endpoint of the horizontal edge is the ``0th left-side point," and moving down along the left side, the vertices are the 1st, 2nd, etc. left-side points of the lower $n$-gon. The right endpoint of the horizontal edge is the ``0th right-side point," and moving down along the right side, the vertices are the 1st, 2nd, etc. right-side points of the lower $n$-gon. The bottom vertex is equivalently either the $(n-1)/2$th left-side or right-side point.
\end{definition}

\begin{proposition} \label{shearngons}
The coordinates of $k$th right-side point of the upper $n$-gon are 
\[
(1+\sum_{i=1}^k \cos i\alpha, \sum_{i=1}^k \sin i\alpha),
\]
and the $x$-coordinate of its image under $M_n$ is 
\begin{eqnarray}
2n+1 + \sum_{i=1}^n (4(k-i)+3)\cos i\alpha. \label{Rn1}
\end{eqnarray}
The coordinates of the $k$th left-side point of the upper $n$-gon are 
\[(-\sum_{i=1}^k \cos i\alpha, \sum_{i=1}^k \sin i\alpha),
\]
and the $x$-coordinate of its image under $M_n$ is 
\begin{eqnarray}
2n + \sum_{i=1}^k (4(k-i)+1)\cos i\alpha. \label{Ln1}
\end{eqnarray}
The coordinates of the $k$th right-side point of the lower $n$-gon are
\[
(-\cos\alpha+\sum_{i=1}^k \cos i\alpha, \sin\alpha - \sum_{i=1}^k \sin k\alpha),
\]
and the $x$-coordinate of its image under $M_n$ is
\begin{eqnarray}
2-2k+\cos \alpha - \sum_{i=1}^k (4(k-i)+1)\cos i\alpha. \label{U1}
\end{eqnarray}
The coordinates of the $k$th left-side point of the lower $n$-gon are 
\[
(-\cos\alpha-\sum_{i=1}^k \cos i\alpha, \sin\alpha - \sum_{i=1}^k \sin k\alpha),
\]
and the $x$-coordinate of its image under $M_n$ is
\begin{eqnarray}
1-2k+\cos\alpha - \sum_{i=1}^k (4(k-i)+3)\cos i\alpha. \label{U2}
\end{eqnarray}
\end{proposition}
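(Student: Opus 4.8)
The plan is to prove the proposition by direct computation: it packages four coordinate formulas and four $x$-coordinate-of-image formulas, and the only non-mechanical ingredients are the geometry of a regular $n$-gon and Lemma~\ref{identitysum}.

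First I would pin down the coordinates. A unit-edge regular $n$-gon is the concatenation of unit vectors whose directions advance by the exterior angle $\alpha=2\pi/n$ at each vertex. Starting the upper $n$-gon from its horizontal edge with endpoints $(0,0)$ and $(1,0)$ and traversing counter-clockwise, the right-hand edges point in directions $\alpha, 2\alpha, \dots$, so the $k$th right-side point is $(1,0)+\sum_{i=1}^k(\cos i\alpha,\sin i\alpha)$, which is the stated formula; the left-side points then follow from the reflection $x\mapsto 1-x$ given by the mirror symmetry of the polygon. For the lower $n$-gon I would use that it is the mirror image of the upper $n$-gon across the edge $S_n$, the segment from $(0,0)$ to $(-\cos\alpha,\sin\alpha)$: reflecting the already-known coordinate formulas across the line through the origin at angle $\pi-\alpha$ and simplifying the resulting trigonometric sums produces the two remaining coordinate formulas. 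Alternatively one re-runs the turning-vector argument starting from the lower $n$-gon's own horizontal edge, which is the cleaner way to see the additive constants $-\cos\alpha$ and $+\sin\alpha$ and the reversed sign in front of the $\sin$-sum (its vertical direction is opposite to that of the upper $n$-gon).

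Second, I would apply $M_n=\bigl(\begin{smallmatrix}1&2\cot(\pi/n)\\ 0&1\end{smallmatrix}\bigr)$. Being a horizontal shear, it changes only the $x$-coordinate, adding $2\cot(\pi/n)$ times the $y$-coordinate; and every $y$-coordinate that occurs is $\pm\sum_{i=1}^k\sin i\alpha$ (plus the constant $\sin\alpha$ in the lower-$n$-gon case). So the whole problem collapses to evaluating $2\cot(\pi/n)\sum_{i=1}^k\sin i\alpha$, which is exactly Lemma~\ref{identitysum} with $\alpha/2=\pi/n$: it gives $\cot(\pi/n)\sum_{i=1}^k\sin i\alpha = k+\sum_{i=1}^k(2(k-i)+1)\cos i\alpha$. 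Substituting, doubling, and folding in the $\pm\sum\cos i\alpha$ already present in the vertex's $x$-coordinate turns each image $x$-coordinate into an affine function of $k$ plus a single sum of the form $\sum_{i}(4(k-i)+c)\cos i\alpha$ with $c\in\{1,3\}$, which is the claimed closed form. Where it is convenient to rewrite the sums with upper limit $n$ instead of $k$, I would finish using the root-of-unity identities $\sum_{i=1}^n\cos i\alpha=0$ and $\sum_{i=1}^n i\cos i\alpha=n/2$.

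I do not expect a genuine obstacle — the difficulty is purely bookkeeping. The place demanding the most care is the lower $n$-gon: correctly orienting it, tracking the position constants coming from the gluing along $S_n$, and applying the reflection consistently to both the constant part and the summed part of each coordinate. I would therefore handle the upper $n$-gon first, since that case is the cleanest instance of the Lemma~\ref{identitysum} substitution and fixes the normalization, then obtain the lower $n$-gon's four formulas from the reflection, verifying a small case ($n=5$, $k=1,2$) numerically to catch any sign errors before writing things out in general.
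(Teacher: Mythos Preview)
Your plan is essentially the paper's own proof: read off the vertex coordinates from the turning-angle picture, apply the horizontal shear, and invoke Lemma~\ref{identitysum} to collapse $2\cot(\alpha/2)\sum_{i=1}^k\sin i\alpha$; the paper carries out only the upper right-side case explicitly and declares the remaining three ``the same,'' so your use of the reflection symmetries is just a tidy way to organize that repetition. One caution: the constants $2n+1$, $2n$ and the upper summation limits $n$ in the first two image formulas of the statement are typos for $2k+1$, $2k$, and $k$ (as the paper's own derivation and Proposition~\ref{geometric} make clear), so the root-of-unity step you propose to pass from $\sum_{i=1}^k$ to $\sum_{i=1}^n$ is unnecessary and in fact would not reconcile the two forms---your computation will land directly on the $\sum_{i=1}^k$ version, and that is the correct one.
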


We do not give the $y$-coordinate of the image, because it is preserved by $M_n$.

\begin{proof}
We obtain the original coordinates of the points of the $n$-gons by inspecting a diagram like Figure \ref{11gon}. We will work through the calculation for image of the $k$th right-side point of the upper $n$-gon; the other three cases are similar. 

We can find the $x$-coordinates of the upper $n$-gon's vertices by drawing in the exterior angles $\alpha$, $2\alpha$, $3\alpha$, etc. and then using cosine and sine to find the lengths. The $x$-coordinates are found using cosine, and those lengths are labeled in Figure \ref{11gon}; the $y$-coordinates are found similarly, with sine. An analogous picture is used for the lower $n$-gon.

\begin{figure}[!h]
\centering
\includegraphics[width=200pt]{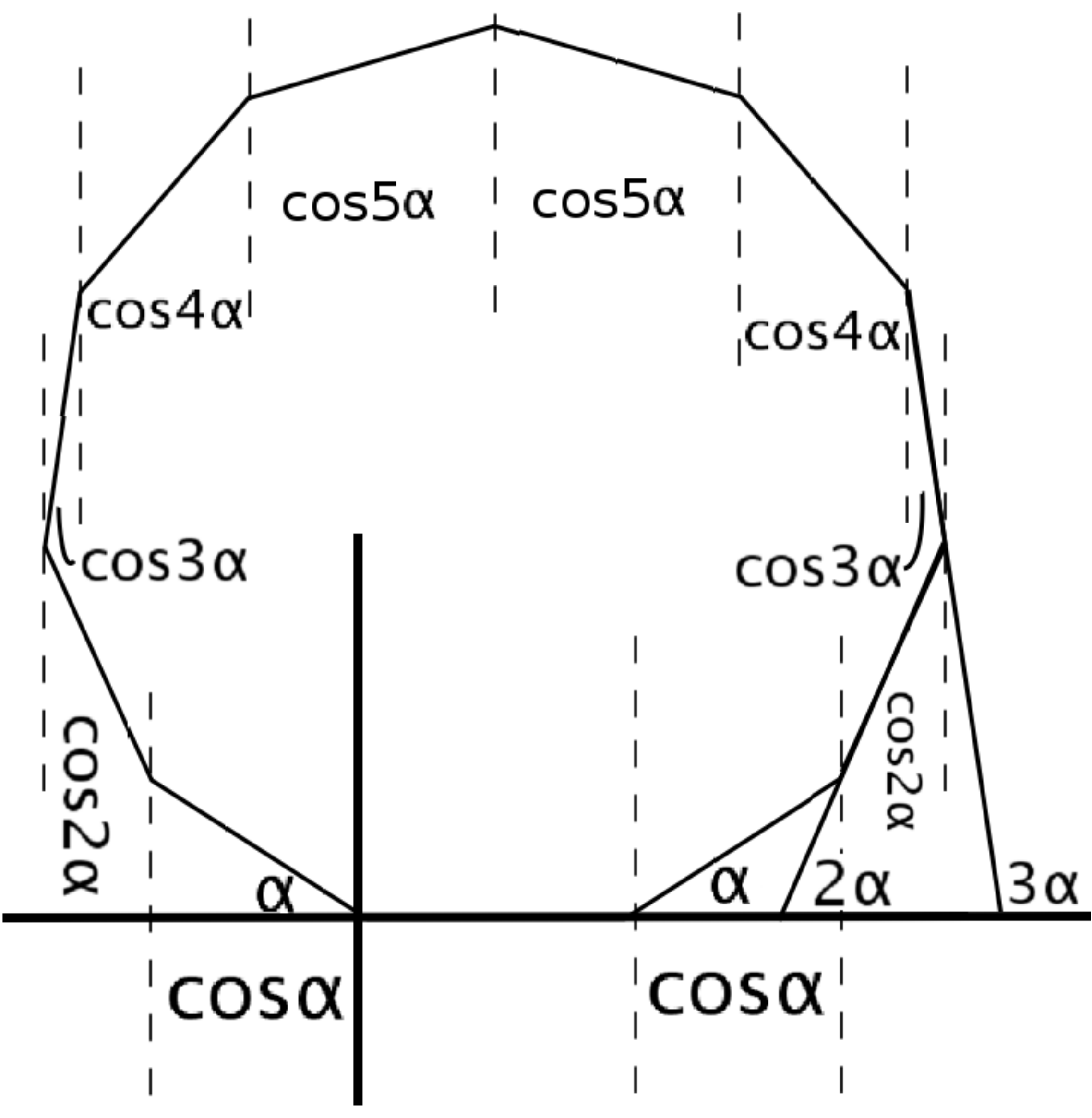}
\begin{quote}\caption{Finding the $x$-coordinates of the upper $n$-gon's vertices  \label{11gon}} \end{quote}
\end{figure}

A right-side point on the top $n$-gon (i.e., the first of the four cases in the Proposition) is of the form $(1+\cos\alpha + \ldots + \cos k\alpha,\sin\alpha+\ldots+\sin k\alpha)$. To apply the shearing Veech element $M_n$ to this point, we multiply (see Proposition \ref{veechelt}) 
\begin{eqnarray*}
\begin{pmatrix}
  1&2\cot(\alpha/2)
  \\ 0&1
\end{pmatrix}
\begin{pmatrix}
 1+\sum_{i=1}^k \cos i\alpha \\
\sum_{i=1}^k \sin i\alpha
\end{pmatrix}.
\end{eqnarray*}
Note that we have used the shearing Veech element $M_n$ here, rather than the generator $\bigl( \begin{smallmatrix} -1&0\\ 0&1 \end{smallmatrix} \bigr) M_n$ of the Veech group, so that we have only a shear, without the horizontal flip. Preserving the orientation for our calculation simplifies our understanding of what is going on. Once we understand the shear, we will flip the picture horizontally, applying the Veech group generator; see Remark \ref{flip} at the end of the chapter.

The $y$-coordinate is unchanged, and the $x$-coordinate is 
\begin{eqnarray*}
1+\sum_{i=1}^k \cos i\alpha &+& 2\cot \alpha/2 \sum_{i=1}^k \sin i\alpha \\
= 1+\sum_{i=1}^k \cos i\alpha &+& 2 \sum_{i=1}^k \cot \alpha/2 \sin i\alpha.
\end{eqnarray*}
Applying Lemma \ref{identitysum} yields
\begin{eqnarray*}
&=& 1+\sum_{i=1}^k \cos i\alpha + 2 \big ( k+ \sum_{i=1}^k (2(k-i)+1)\cos i\alpha \big ) \\
&=& 1+\sum_{i=1}^k \cos i\alpha + 2k + \sum_{i=1}^n (4(k-i)+2) \cos i\alpha \\
&=& 2k+1 + \sum_{i=1}^n (4(k-i)+3) \cos i\alpha, 
\end{eqnarray*}
as desired.

The calculations for the other three types of points are the same.
\end{proof}

When we construct the vertex generator guide as described in Chapter \ref{vertexguide}, we get a picture as in Figure \ref{11gongrid}. (Note that this is only for the upper $n$-gon; we can draw a similar picture for the lower $n$-gon.)  The original locations of the $n$-gon's vertices are in gray on the left side of the diagram, and their sheared images on the vertex generator guide are in black.\label{11gongrid} In the figure, the numbers indicate horizontal distances: the label $k$ represents a horizontal distance of $\cos k\alpha$ (see Figure \ref{11gon}). Additionally, each horizontal edge has length $1$.

\begin{figure}[!h]
\centering
\includegraphics[width=430pt]{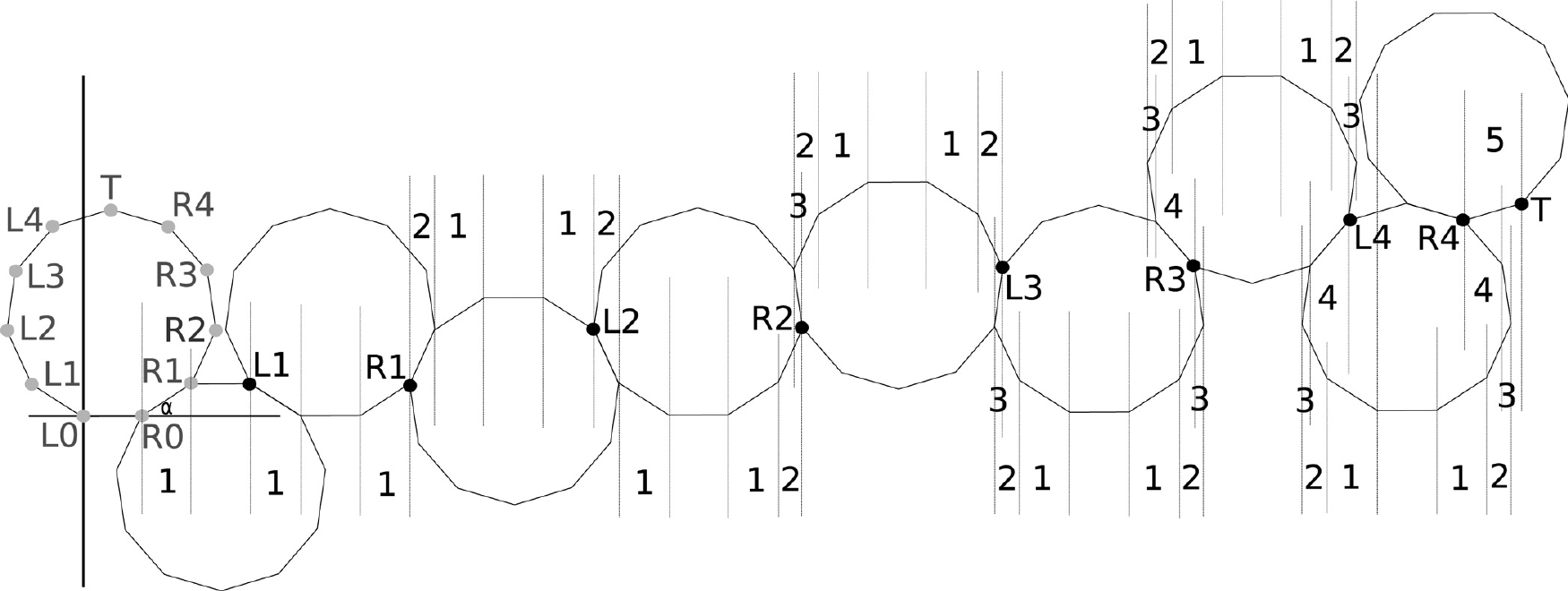}
\begin{quote}\caption{A diagram for finding the $x$-coordinates of the points in the vertex generator guide } \end{quote}
\end{figure}

The $y$-coordinates are unchanged, and we can find the $x$-coordinates of the vertices by finding sums of the edge vectors. We do this in the following proposition:

\begin{proposition} \label{geometric}
The $x$-coordinates of the vertices of the sheared upper $n$-gon in the vertex generator guide are given by:

\begin{eqnarray}
R_k &=& 2k+1 + \sum_{i=1}^k (4(k-i)+3)\cos i\alpha, \label{Rn2} \\
L_k &=& 2k + \sum_{i=1}^k (4(k-i)+1)\cos i\alpha. \label{Ln2}
\end{eqnarray}
\end{proposition}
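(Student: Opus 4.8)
The plan is to obtain each coordinate of the vertex generator guide as the net horizontal displacement along a path of polygon edges running from the origin to that vertex, and then to recognize the resulting sum as the stated formula. Write $h_j=\sum_{i=1}^{j}\sin i\alpha$ for the common height of the bars $U_{2j}$ and $U'_{2j}$. By construction $P_U^{(2k)}$ is an isometric copy of $P_U$: it is reached from $P_U$ by the chain of edge-gluings of Chapter~\ref{vertexguide}, i.e.\ by an even number of reflections. Since its bar $U'_{2k}$ lies in the horizontal line of $U_{2k}$ by Definition~\ref{vgg}, this isometry must be a pure horizontal translation, say by $(W_k,0)$. Hence the right endpoint of $U'_{2k}$ is the $k$th right-side point of $P_U$ shifted by $W_k$, so by the interior coordinates of Proposition~\ref{shearngons},
\[
R_k=1+\sum_{i=1}^{k}\cos i\alpha+W_k,\qquad L_k=-\sum_{i=1}^{k}\cos i\alpha+W_k .
\]
So the proposition reduces to the single identity $W_k=2k+\sum_{i=1}^{k}\bigl(4(k-i)+2\bigr)\cos i\alpha$.

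I would prove this identity by induction on $k$, reading off the horizontal increments from the construction of Chapter~\ref{vertexguide}. For $k=1$, gluing a copy of $P_L$ along $S_2$ of $P_U$ and then a copy of $P_U$ along $S_n$ of that copy (Figure~\ref{vggU2}) shifts $P_U$ rightward by $2+2\cos\alpha$; thus $W_1=2+2\cos\alpha$, so $R_1=3+3\cos\alpha$ and $L_1=2+\cos\alpha$, which are formulas (\ref{Rn2}) and (\ref{Ln2}) at $k=1$. For the inductive step, the passage from $P_U^{(2(k-1))}$ to $P_U^{(2k)}$ is the gluing of a copy of $P_L$ along $S_k$ of the right-most $P_U$ followed by a copy of $P_U$ along $S_{n+2-k}$ of that copy of $P_L$ (Figures~\ref{vggU4} and \ref{vggPUall}). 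Tracing a vertex path through these two new polygons and adding up the horizontal components of the edges it crosses, I expect the extra displacement to be
\[
W_k-W_{k-1}=2+2\cos k\alpha+4\sum_{i=1}^{k-1}\cos i\alpha ,
\]
the $2$ coming from two unit horizontal edges, the $2\cos k\alpha$ from the two edges $S_k$ and $S_{n+2-k}$ first used at this stage, and the $4\cos i\alpha$ (for $i<k$) from the angle-$i\alpha$ slanted edges of the two new polygons, each traversed once in each direction of the ``up, across, down'' portion of the path. Telescoping this against the base case and collecting the coefficient of each $\cos i\alpha$ gives the formula for $W_k$, and substituting it into the two expressions above completes the proof. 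As a sanity check, Lemma~\ref{identitysum} rewrites $W_k$ as $2\cot(\alpha/2)\,h_k$, exactly the rightward push that a bar at height $h_k$ receives under the shear $M_n$ of Proposition~\ref{veechelt}.

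The main obstacle is the edge-by-edge bookkeeping in the inductive step: one must identify precisely which slanted edges of the two newly glued polygons the chosen vertex path meets, and with which signs, since each reflection-type gluing reverses orientation and hence flips the sign of a $\cos i\alpha$ contribution. Pinning the net coefficient of $\cos i\alpha$ down to exactly $4(k-i)+2$ in $W_k$ — equivalently $4(k-i)+3$ in $R_k$ and $4(k-i)+1$ in $L_k$ — is the one delicate point; the cleanest way to handle it is to draw the full configuration for a small fixed $n$ (for instance the $P_U$-side construction for $n=11$, and the worked cases $n=5,7,9$ in Figures~\ref{pentshear}--\ref{nonagonshear}) and then observe that the tally depends on $i$ and $k$ but not on $n$. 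One also has to confirm that the isometry carrying $P_U$ to $P_U^{(2k)}$ has no rotational or vertical part, which is forced by the fact that the successive gluings in the chain are across edges at matching horizontal heights — the very condition that places $U'_{2k}$ in the line of $U_{2k}$ as Definition~\ref{vgg} requires. The lower $n$-gon is treated by the mirror-image construction, and once (\ref{Rn2}) and (\ref{Ln2}) are established they agree termwise with the matrix computations (\ref{Rn1}) and (\ref{Ln1}), as the Reassembly Theorem will use.
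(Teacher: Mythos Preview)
Your proposal is correct and follows essentially the same strategy as the paper: both trace edge paths (``snaking'') through the chain of glued polygons and count, by induction on $k$, the horizontal contributions $1$ and $\cos i\alpha$ that accumulate at each step. Your framing is slightly more streamlined than the paper's: by observing that $P_U^{(2k)}$ is a pure horizontal translate of $P_U$ and isolating the single translation amount $W_k$, you reduce the two formulas (\ref{Rn2}) and (\ref{Ln2}) to one identity, whereas the paper argues for $R_k$ and $L_k$ separately by directly tallying the coefficient of each $\cos i\alpha$ along the snake path. Your sanity check $W_k=2\cot(\alpha/2)\,h_k$ via Lemma~\ref{identitysum} is a nice addition not present in the paper's own proof, and in effect anticipates the comparison with Proposition~\ref{shearngons} that completes Theorem~\ref{theyrethesame}.
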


\begin{proof}
We find the coordinates by ``snaking" along the edges of the $n$-gons, as in Figure \ref{11gongrid}, from one point of the vertex generator guide to the next, always choosing the path around the $n$-gon that includes the horizontal edge, so that our angles have the smallest possible multiple of $\alpha$ (Figure \ref{snake}). When we do this, we obtain the above expressions for $R_n$ and $L_n$, as follows: 

Consider the leading term $2k+1$ of $R_k$, which measures the number of horizontal edges we use to reach $R_k$ from $L_0$. It is clear that for $R_0$, this number is 1. We have to move along two polygons to get from one $R_k$ to the next, and we always choose the route that includes a horizontal edge, so when $k$ increases by 1, the number of horizontal edges increases by 2. Thus, the leading term should be $2k+1$. 

The other terms can be found similarly, looking at the first instance in which $\cos i\alpha$ appears for a given value of $i$, and then proceeding by induction as above.
\end{proof}

\begin{figure}[!h]
\centering
\includegraphics[width=430pt]{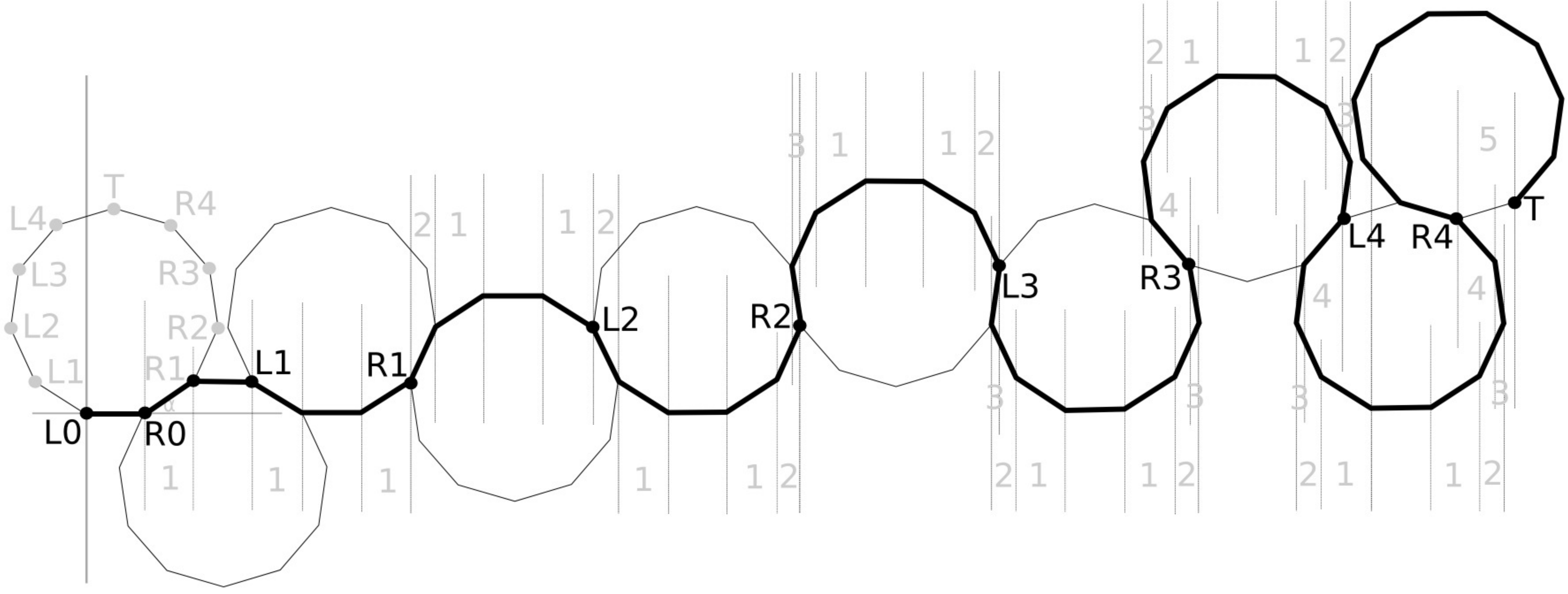}
\begin{quote}\caption{We find the coordinates of the vertex generator guide points by ``snaking" around the figure\label{snake}} \end{quote}
\end{figure}

Note that when we have found the coordinates of $T$, we are done; for example, in Figure \ref{snake} we have found \emph{all} the vertices for the upper $n$-gon.

\begin{remark}
We only give expressions for $L_n$ and $R_n$, never for the image of the top vertex $T$. This is because for a double $n$-gon, if $m=(n-1)/2$, $T=R_m=L_m$, so its coordinates can be obtained from either of the above equations.
\end{remark}

\begin{proposition} \label{lowerngon}
The $x$-coordinates of the vertices along the right side of the sheared lower $n$-gon in the vertex generator guide are given by
\begin{eqnarray}
R_n &=& 2-2k+\cos \alpha - \sum_{i=1}^k (4(k-i)+1)\cos i\alpha. \label{Rn3}
\end{eqnarray}
and the $x$-coordinates of the vertices along the left side of the sheared lower $n$-gon in the vertex generator guide are given by
\begin{eqnarray}
L_n &+& 1-2k+\cos\alpha - \sum_{i=1}^k (4(k-i)+3)\cos i\alpha. \label{Ln3}
\end{eqnarray}
\end{proposition}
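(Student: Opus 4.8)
The plan is to mirror the proof of Proposition \ref{geometric}, but now carrying out the ``snaking'' computation along the chain of images of $P_L$ in the vertex generator guide (the copies $P_L^{(4)}, P_L^{(6)}, \ldots$ built in Chapter \ref{vertexguide}) rather than along the chain of images of $P_U$. Concretely, I would start from the bar $L_2$, which lies on the $x$-axis and is therefore fixed by the horizontal shear, and walk from the image of one lower-$n$-gon vertex to the next along polygon edges, at each stage choosing the route around the polygon that uses the horizontal edge, so that every slanted edge contributes a vector $(\pm\cos j\alpha, \pm\sin j\alpha)$ with $j$ as small as possible (as in Figure \ref{snake}). Reading off the accumulated horizontal displacements then produces the claimed closed forms.

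The bookkeeping splits into the same three parts as in Proposition \ref{geometric}. The leading term $2-2k$ (respectively $1-2k$) counts, with sign, the horizontal unit edges traversed to reach the $k$th right-side (respectively left-side) point of the lower $n$-gon; since the lower $n$-gon lies below the $x$-axis, the shear $M_n$ carries its vertices leftward, so the horizontal-edge count decreases by $2$ each time $k$ increases by $1$, and one checks the base case $k=0$ directly against the coordinates $(-\cos\alpha,\sin\alpha)$ of the top horizontal edge of $P_L$. The constant $\cos\alpha$ records the initial horizontal offset of $P_L$ coming from the reflection across the left edge $S_n$ of $P_U$. Finally, the coefficients $4(k-i)+1$ and $4(k-i)+3$ of $\cos i\alpha$ are obtained, exactly as before, by locating the first value of $k$ at which the frequency $i\alpha$ enters and then inducting upward, each additional step through two polygons adding a fixed increment of $4$ to the coefficient. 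Having done this, I would then observe that the resulting expressions (\ref{Rn3}) and (\ref{Ln3}) coincide term-for-term with the $x$-coordinates (\ref{U1}) and (\ref{U2}) of the algebraic images under $M_n$ computed in Proposition \ref{shearngons}; together with the matchings (\ref{Rn2})=(\ref{Rn1}) and (\ref{Ln2})=(\ref{Ln1}) for the upper $n$-gon, this is precisely the assertion that the geometric vertex generator guide reproduces the effect of the shear, which is the content of the Reassembly Theorem (Theorem \ref{theyrethesame}).

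The step I expect to be the main obstacle is the sign-and-index bookkeeping in the snaking argument for $P_L$: the images of $P_L$ are glued in the mirror-image order relative to those of $P_U$ (a copy of $P_U$ along $S_k$ followed by $P_L$ along $S_{n+2-k}$, rather than $P_L$ along $S_k$ followed by $P_U$ along $S_{n+2-k}$), so the horizontal-edge count runs in the opposite direction and several slanted contributions change sign relative to the upper case. Making the induction on the first appearance of $\cos i\alpha$ fully rigorous — rather than invoking the ``and similarly'' of Proposition \ref{geometric} — requires carefully tracking which edge of which copy first contributes each frequency $i\alpha$ and verifying that the per-step increment of the coefficient is indeed the constant $4$; this is routine but is where the real care is needed.
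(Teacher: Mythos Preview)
Your proposal is correct and takes essentially the same approach as the paper: the paper's proof is simply ``The figure, table and general expressions for the lower $n$-gon are found in the same way as for the upper $n$-gon, so we omit this calculation,'' and your snaking argument along the chain $P_L, P_L^{(4)}, P_L^{(6)},\ldots$ is exactly what ``the same way'' means. Two small remarks: the comparison with (\ref{U1}) and (\ref{U2}) that you append at the end is not part of this proposition but rather the content of the proof of Theorem~\ref{theyrethesame}, so it belongs there; and your heuristic ``the lower $n$-gon lies below the $x$-axis'' is not literally true (the $0$th right- and left-side points lie above it, the $1$st points lie on it), though this does not affect the snaking argument itself.
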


\begin{proof}
The figure, table and general expressions for the lower $n$-gon are found in the same way as for the upper $n$-gon, so we omit this calculation.
\end{proof}

We are now ready to prove the Reassembly Theorem, given at the beginning of this chapter.

\begin{proof}
(Proof of Theorem \ref{theyrethesame}) The geometric description of shearing in Propositions \ref{geometric} and \ref{lowerngon} agree with the algebraic description in Proposition \ref{shearngons} of the image points under the Veech group: The geometric description of the right-side points in the upper $n$-gon in Equation (\ref{Rn1}) agrees with the geometric description in Equation (\ref{Rn2}), and the left-side algebraic equation (\ref{Ln1}) is also identical to the geometric description in equation (\ref{Ln2}). Similar geometric calculations for the lower $n$-gon show that the geometrically-derived expressions in Equations (\ref{Rn3}) and (\ref{Ln3}) agree with the algebraically-derived coordinates in equations (\ref{U1}) and (\ref{U2}). Since the coordinates are the same, the geometric construction of the vertex generator guide agrees with the algebraic result of applying the Veech element.
\end{proof}

\begin{remark} \label{flip}
We have shown that the shearing Veech element $M_n$ acts on the vertices of a standard double $n$-gon by sliding them to corresponding locations in the vertex generator guide. When we apply the reflecting and shearing Veech element $\bigl( \begin{smallmatrix} -1&0\\ 0&1 \end{smallmatrix} \bigr) M_n$, it simply reverses the orientation, exchanging vertex $L_k$ with $R_k$ for each $k$. Since the orientation reversal applies to the Veech element's geometric and algebraic effects in the same way, the two descriptions still agree.
\end{remark}

\section{Derivation for all odd $n$-gons \label{alln}}

Finally, we will show that derivation amounts to applying the rule $ksl(s)$ for a sequence $s$ on any double regular odd $n$-gon.

In Proposition \ref{shearngons}, we placed a $1$ in the upper-left corner of the Veech element (shearing matrix) instead of a $-1$. The result of placing a $-1$ there instead is a horizontal flip, as we discussed in our detailed exploration of the double pentagon in Chapter \ref{deriveBECE}. This simply exchanges the labels of the edges. For example, in the double pentagon, edges $B$ and $E$ are exchanged, and edges $C$ and $D$ are exchanged, and side $A$ is fixed (but reversed in its orientation). This transformation simplifies the procedure significantly. Without it, derivation would be more complicated than the simple rule that defines $ksl(s)$.

\begin{definitions} \label{definitions}
The $n$-gon on the left is the \emph{lower $n$-gon} ($P_L$), and the one on the right is the \emph{upper $n$-gon} ($P_U$). The horizontal edge is $S_1$, and the edges are numbered counter-clockwise up to $S_n$. The image of an edge $S_k$ after the Veech element is applied is $S_k$', and these are called the \emph{primed edges}. Auxiliary edges in the lower $n$-gon are named $L_k$, and those in the upper $n$-gon are named $U_k$. Finally, for ease of notation in labeling the edges of the polygon farthest from edges $S_1$ and $S_n$, we will let $m = (n+1)/2$. Note that this is always an integer, since $n$ is odd. These notations are used in Figure \ref{double17gon}.

``Deriving" a cutting sequence $s$ means removing all letters except those that are sandwiched, i.e. applying the function $ksl(s)$.
\end{definitions}

\begin{lemma} \label{midpoint}
Edge $S_k$' cuts through edge $S_k$, except for horizontal edges, in which case $S_k$' = $S_k$.
\end{lemma}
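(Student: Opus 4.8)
The plan is to pin down each primed edge $S_k'$ explicitly and then check the crossing by hand. By definition $S_k'$ is the image of the edge $S_k$ under the map that applies $\bigl(\begin{smallmatrix}-1&0\\0&1\end{smallmatrix}\bigr)M_n$ and then reassembles the sheared polygon back into the standard double $n$-gon. Since the reassembly returns the very same standard double $n$-gon, this map permutes the vertices of the standard double $n$-gon; hence $S_k'$ is a straight segment joining two of those vertices, and the lemma asserts that this segment meets $S_k$.

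First I would dispose of the exceptional case. A horizontal edge lies on a horizontal line, $M_n$ maps every horizontal line to itself and the flip $\bigl(\begin{smallmatrix}-1&0\\0&1\end{smallmatrix}\bigr)$ does too, so once the sheared pieces are translated back into place the image of a horizontal edge is that same edge; thus $S_k' = S_k$. From now on assume $S_k$ is not horizontal.

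For the non-horizontal edges I would use coordinates. Edge $S_k$ joins two consecutive vertices of one of the two $n$-gons, and Proposition \ref{shearngons} records their coordinates as well as the $x$-coordinates of their images under $M_n$ (the $y$-coordinates being unchanged by the shear), in four families according to upper/lower $n$-gon and left/right side. By the Reassembly Theorem each of these sheared vertices is carried by a horizontal translation onto a definite vertex of the standard double $n$-gon, so the two endpoints of $S_k'$ become explicit. Since the shear preserves $y$-coordinates, $S_k$ and $S_k'$ occupy exactly the same band of heights; it therefore suffices to compare, at one common height, the $x$-coordinate of the point of $S_k$ with that of the point of $S_k'$. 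Performing this comparison at the midpoint height of $S_k$ and simplifying with Lemmas \ref{identities} and \ref{identitysum} shows the two $x$-values agree, so in fact $S_k'$ passes through the midpoint of $S_k$ — which is the source of the name "midpoint" and in particular gives the lemma.

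The main obstacle is organisational rather than conceptual: one must keep straight, for every $k$, which of the four vertex families of Proposition \ref{shearngons} the endpoints of $S_k$ belong to and, via the Reassembly Theorem, which vertices of the standard double $n$-gon their sheared images become; the edges sitting in special positions relative to the shear (for instance those incident to the cone-point vertex, or the ``farthest'' edge $S_m$ with $m=(n+1)/2$) should be checked separately to confirm that the primed edge is a genuine transversal rather than a degenerate coincidence. Once this vertex dictionary is fixed, the trigonometric identities needed for the final comparison are already in hand from Section 2, so the verification is short. Conceptually the crossing reflects the fact that $\bigl(\begin{smallmatrix}-1&0\\0&1\end{smallmatrix}\bigr)M_n$ squares to the identity and fixes the origin, so the corresponding affine automorphism is an involution and $\{S_k, S_k'\}$ is an invariant pair of saddle connections through the single cone point; but extracting the precise statement still proceeds through the coordinates above.
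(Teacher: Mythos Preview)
Your approach is workable but quite different from the paper's. The paper never touches coordinates here: it observes that every non-horizontal edge $S_k$ is the interior gluing edge of exactly one parallelogram in the horizontal cylinder decomposition, and that the flip swaps this interior edge with the left and right sides of that parallelogram. Since Theorem~\ref{theyrethesame} says the shear $M_n$ is a \emph{single} Dehn twist in each cylinder, the flipped-and-sheared image $S_k'$ is simply the diagonal of the parallelogram from the lower-left corner to the upper-right corner; any segment running from the top edge to the bottom edge of a parallelogram must cross such a diagonal, so $S_k'$ meets $S_k$. This argument is two sentences long and uniform in $k$, with no case analysis and no trigonometry.

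Your coordinate route would also succeed, and in fact proves slightly more: because the two trapezoids making up each cylinder are $180^\circ$ rotations of one another, the midpoint of the interior edge $S_k$ is the center of the parallelogram, hence lies on the diagonal $S_k'$ --- which is presumably why the lemma carries the label \texttt{midpoint}. But getting there through Proposition~\ref{shearngons} and the Reassembly Theorem forces you to build the ``vertex dictionary'' you describe (tracking, for each $k$, which vertices of the standard double $n$-gon the two endpoints of $S_k'$ become after reassembly) and then to run a separate trigonometric check in each of the four vertex families, plus the boundary cases you flag. All of that is avoidable once you notice the cylinder picture; the single-twist fact is exactly the right granularity for this lemma.
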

\begin{proof}
The effect of the Veech element on a horizontal edge $S_k$ is horizontal translation to a congruent edge $S_k$'. We cut up the pieces of the sheared polygons and reassemble them back into the standard double polygon, so translation has no effect, and $S_k$' = $S_k$.

If we perform a cylinder decomposition of a double polygon, each cylinder (which is a parallelogram) is made of two smaller polygons (trapezoids or triangles), one from $P_U$ and one from $P_L$, glued along an edge that is inside the parallelogram. Every non-horizontal edge $S_k$ appears as this interior gluing edge of some cylinder decomposition parallelogram (Figure \ref{skintskprime}). We will show that $S_k$' cuts through edge $S_k$ in two steps:

\begin{figure}[!h]
\centering
\includegraphics[width=280pt]{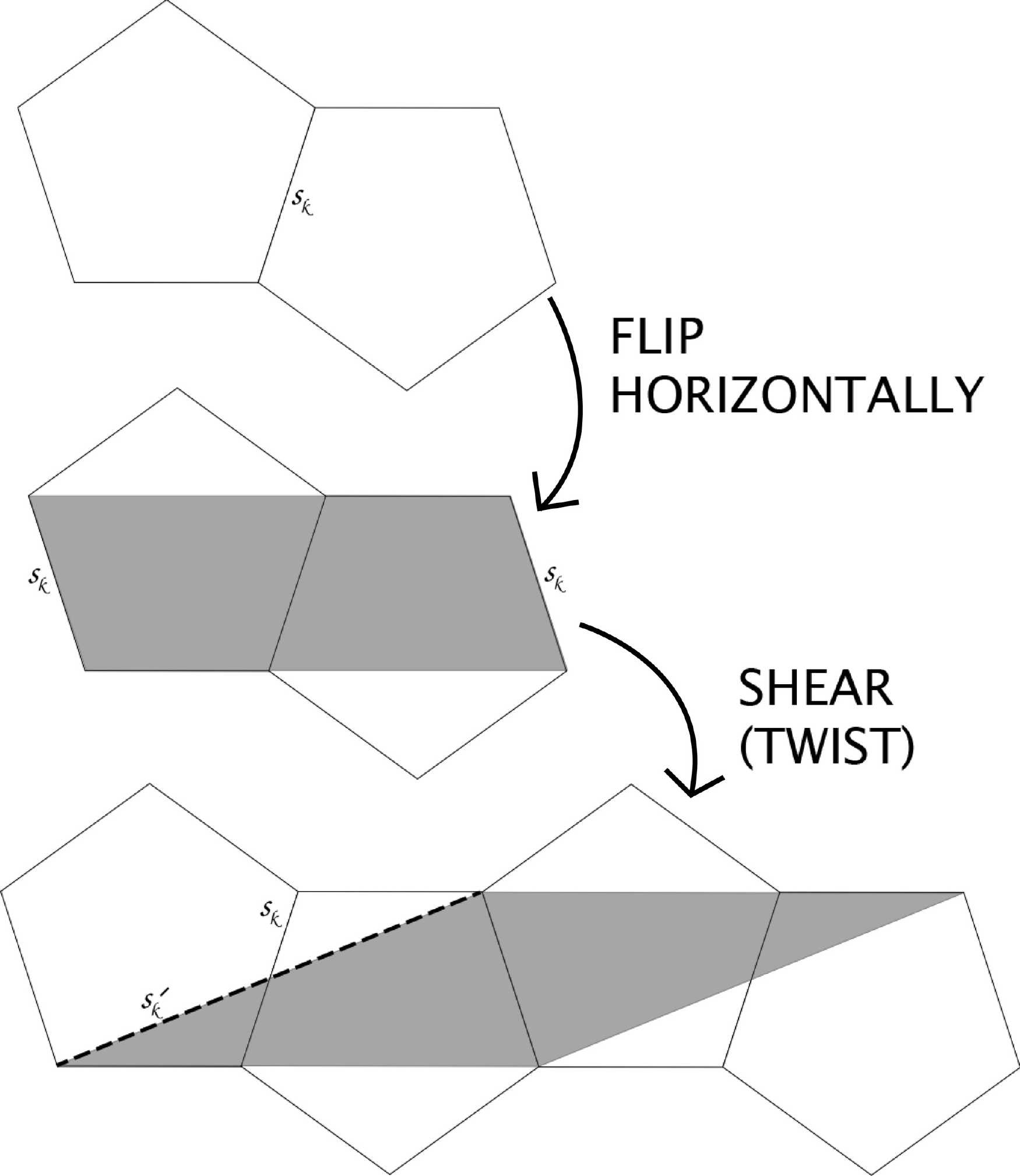}
\begin{quote}\caption{Showing that edge $S_k$ intersects edge $S_k$' \label{skintskprime}} \end{quote}
\end{figure}

\vspace{10pt} \noindent $\mathbf{STEP \ 1:}$ We apply a horizontal flip. This exchanges the label of the interior gluing edge with the labels of the left and right sides of the parallelogram. After this flip, $S_k$ is the left and right edges of the parallelogram (the second diagram in Figure \ref{skintskprime}).

\vspace{10pt} \noindent $\mathbf{STEP \ 2:}$ We apply the shearing Veech element $M_n$. The effect of the Veech element is to perform a single twist in each cylinder. Consider the copy of $S_k$ appearing on the left side of the parallelogram. The shear fixes the lowest point of this edge and takes the highest point of the edge to the copy of $S_k$ on the right side of the cylinder. In other words, the sheared edge $S_k$' is a diagonal across the parallelogram from the lower-left corner to the upper-right corner (see the third diagram in Figure \ref{skintskprime}). Since the interior edge, which was the initial position of $S_k$, touches the top and bottom horizontal edges of the parallelogram, it must intersect the diagonal $S_k$'.
\end{proof}

\begin{lemma} \label{nminus3}
We need $n-3$ auxiliary edges in an $n$-gon.
\end{lemma}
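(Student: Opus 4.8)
The plan is to count the auxiliary edges directly. Recall that by construction the auxiliary edges of an $n$-gon are precisely its diagonals that are parallel to one of the two edge-directions bounding the sector $\theta\in[0,\pi/n)$ of admissible trajectory directions, namely $\theta=0$ and $\theta=\pi/n$; these are, for instance, the diagonals $f,g,h,i$ drawn in the double pentagon, and for general $n$ they include the horizontal diagonals $U_2,U_4,\ldots,U_{n-3}$ (and the analogous $L_2,L_4,\ldots,L_{n-3}$) introduced in Chapter \ref{vertexguide}. So it suffices to show that a regular $n$-gon has exactly $(n-3)/2$ diagonals parallel to each of these two directions, for a total of $n-3$.

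For the direction $\theta=0$ I would use the vertical axis of symmetry of the $n$-gon, namely the perpendicular bisector of the horizontal edge $S_1$, which passes through the opposite vertex since $n$ is odd. Under this reflection the $n$ vertices split into $(n-1)/2$ mirror pairs of equal height together with the single fixed top vertex, and a horizontal chord joins the two vertices of each pair; these are the only horizontal chords of the polygon, since distinct mirror pairs sit at distinct heights. Exactly one of these $(n-1)/2$ chords is an edge, namely $S_1$, because $S_1$ is the unique horizontal edge: for $j\neq 1$, edge $S_j$ points in direction $(j-1)\alpha$, and $(j-1)\alpha\equiv 0\pmod{\pi}$ would force $n\mid 2(j-1)$, hence $n\mid (j-1)$ since $n$ is odd. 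So there are $(n-1)/2-1=(n-3)/2$ horizontal diagonals, matching the list $U_2,U_4,\ldots,U_{n-3}$.

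For the direction $\theta=\pi/n$ I would first record that it really is an edge-direction. Going counterclockwise from $S_1$, the edges $S_1,\ldots,S_n$ point in directions $0,\alpha,2\alpha,\ldots,(n-1)\alpha$; reducing modulo $\pi$ and using $\gcd(2,n)=1$, one checks these are exactly the $n$ distinct multiples $k\pi/n$ with $0\le k\le n-1$, so $\pi/n$ occurs. Rotation by $\alpha$ is a symmetry of the $n$-gon that cyclically permutes $S_1,\ldots,S_n$ and therefore acts as an $n$-cycle on their $n$ distinct direction-classes; hence any two edge-direction-classes are interchanged by a rotational symmetry, which carries diagonals to diagonals. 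Consequently every edge-direction-class contains as many diagonals as the class $\theta=0$, namely $(n-3)/2$, and adding the contributions from $\theta=0$ and $\theta=\pi/n$ gives $n-3$ auxiliary edges.

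I do not anticipate a genuine obstacle; the one point requiring care is the claim that the chord-directions of a regular $n$-gon are indexed by the edges modulo $n$, so that the rotation acts transitively on them — this is precisely where the oddness of $n$ is used, and it is the same fact that makes each diagonal of a double odd-gon parallel to a unique polygon edge. The remaining content is elementary bookkeeping, which can be cross-checked against the explicit pentagon ($n-3=2$) and heptagon ($n-3=4$) figures.
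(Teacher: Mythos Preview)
Your argument is correct, but it takes a different route from the paper's. The paper dispatches the lemma in one line by observing that the auxiliary edges constitute a triangulation of the $n$-gon, and any triangulation of a convex $n$-gon uses exactly $n-3$ diagonals; it also offers the obvious induction (two auxiliary edges in the pentagon, and each step $n\mapsto n+2$ adds two more). You instead count the diagonals in the two relevant directions separately: $(n-3)/2$ horizontal ones via the reflection in the vertical axis, and then the same number at angle $\pi/n$ by transporting the count with the rotational symmetry, using oddness of $n$ to see that the $n$ edge-directions modulo $\pi$ are distinct and cyclically permuted. Your approach is longer but has the advantage that it does not rely on checking that the auxiliary diagonals actually triangulate the polygon (a geometric fact the paper takes for granted), and it yields the finer information that the auxiliary edges split evenly between the two directions, which matches the explicit labelings $U_2,U_4,\ldots,U_{n-3}$ and their slanted counterparts.
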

\begin{proof}
Adding auxiliary edges is a triangulation, which requires $n-3$ triangles. Alternatively, there are $2$ auxiliary edges in a pentagon, and each time we add two edges, we increase the number of auxiliary edges by $2$.
\end{proof}

Lemmas \ref{midpoint} and \ref{nminus3} allow us to draw a picture of our double $n$-gon, with primed edges and auxiliary edges, which we do in Figure \ref{double17gon}. For the pentagon, we drew this in two steps: First, we drew the double pentagon with original edges and auxiliary edges, and then we drew a double pentagon with auxiliary edges and primed edges (Figures \ref{dualletters} and \ref{adjacency}). Now, we have combined all the information into one picture, as we did with the heptagon in Section \ref{heptagonsection}.

\begin{figure}[!ht]
\centering
\includegraphics[width=390pt]{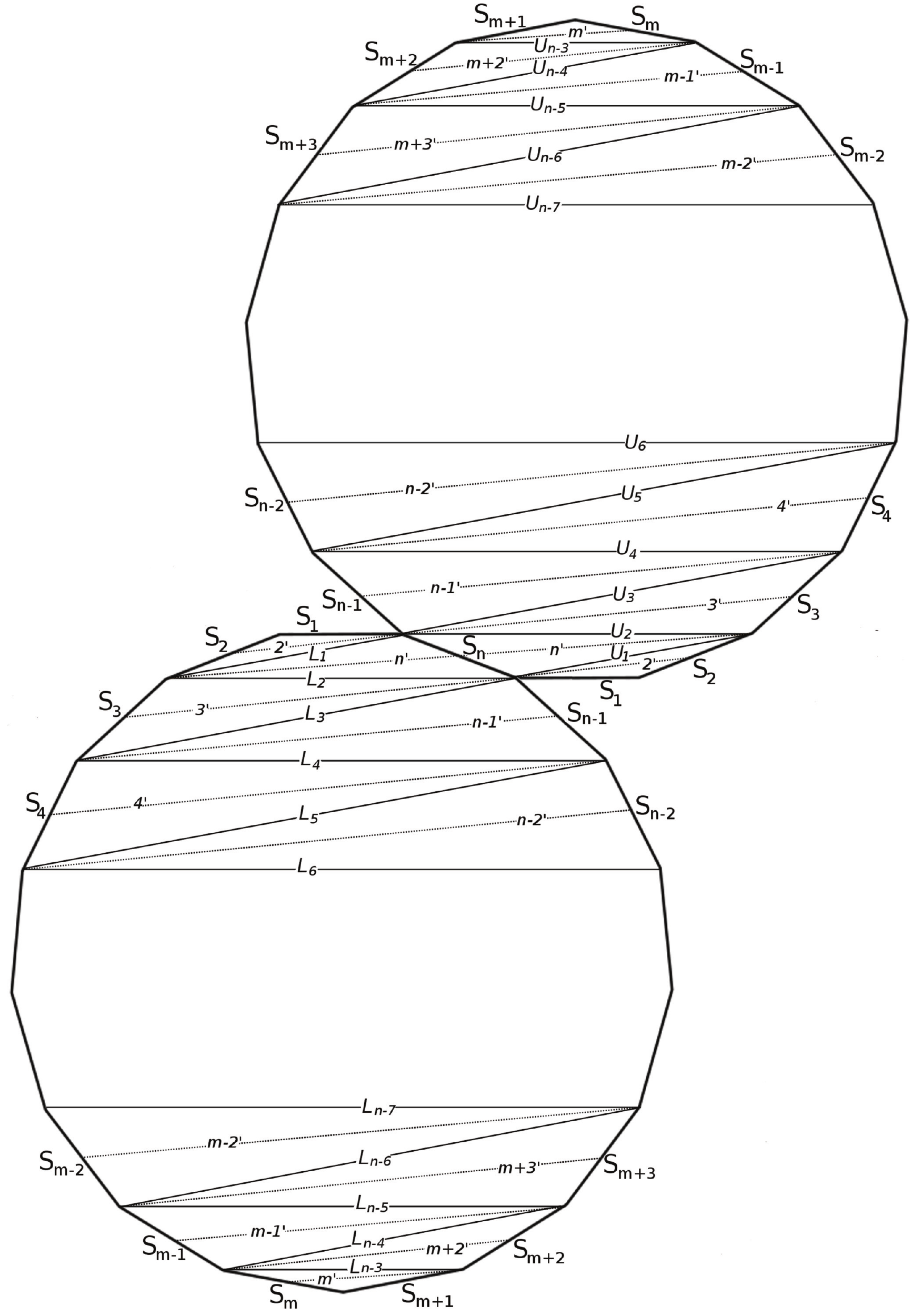}
\begin{quote}\caption{A general double $n$-gon for odd $n$, with both primed and auxiliary edges included\label{double17gon}} \end{quote}
\end{figure}

Now we can prove that the effect of applying the Veech shear is derivation of the cutting sequence for all double odd-gons. Again, the proof will consist of $6$ steps, exactly analogous to the $6$ steps for the double pentagon in Section \ref{sequences}.

\begin{theorem} \label{koslalln}
Given a bi-infinite trajectory on a double regular $n$-gon, with $n$ odd, and its corresponding cutting sequence, the cutting sequence of the trajectory obtained by applying the Veech element to the double $n$-gon is the derivation of the original cutting sequence.
\end{theorem}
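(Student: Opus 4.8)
The plan is to mimic the six-step argument already carried out for the double pentagon in Section~\ref{sequences}, but now using the general picture of the double $n$-gon with primed and auxiliary edges from Figure~\ref{double17gon}, together with Lemma~\ref{midpoint} (each non-horizontal $S_k'$ cuts through $S_k$), Lemma~\ref{nminus3} ($n-3$ auxiliary edges), and the Reassembly Theorem~\ref{theyrethesame} (which guarantees the sheared double $n$-gon really does reassemble along the edges of the vertex generator guide, so that reading off which primed edges a trajectory crosses genuinely computes the derived cutting sequence). By Remark~\ref{flip}, it suffices to understand the orientation-preserving shear $M_n$ and then apply the horizontal flip $\bigl(\begin{smallmatrix}-1&0\\0&1\end{smallmatrix}\bigr)$, which only relabels edges; and by the reduction in the remark before Section~\ref{sequences}, we may assume the trajectory has direction $\theta\in[0,2\pi/n)$, since other directions are handled by a rotation inducing a relabeling.

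First I would build the ``arrows'' transition diagram of admissible two-letter subwords for $\theta\in[0,\alpha/2)$ on the double $n$-gon: from an edge $S_j$ the trajectory can pass, on one polygon, only to the ``next'' admissible edges, giving the black/gray alternating structure generalizing Figure~\ref{arrows}; this is where the constraint $\theta<\alpha/2$ is used. Second, I insert the $n-3$ auxiliary diagonals at angles $\theta=0$ and $\theta=\alpha/2$ (Lemma~\ref{nminus3}), label the relevant arrows by the auxiliary letters, and augment the sequence accordingly, as in Step~2 for the pentagon. Third, I dualize: the auxiliary letters and the two horizontal/extreme edges become nodes, the remaining edges become arrow labels, producing the analogue of Figure~\ref{dualletters}. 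Fourth, I delete all labels except those at $\theta=0$ and $\theta=\alpha/2$, leaving the ``unlabeled'' diagram, which by construction is a horizontal chain of central squares — one square for the pentagon, three for the heptagon, and $(n-3)/2$ in general. Fifth, using Lemma~\ref{midpoint} I relabel the arrows of this chain with the primed edges each crosses, exactly as in Figure~\ref{adjacency}; drawing enough copies of the $n$-gons so every primed edge appears as a diagonal of some parallelogram makes this routine. Sixth, comparing the dual diagram with the primed diagram (the analogue of Figure~\ref{dualprimes}), I read off that the surviving letters are precisely the sandwiched ones.

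The cleanest way to package Steps~1--6 uniformly in $n$ is to invoke Lemma~\ref{sandwichproperty}: once I have exhibited that, for every odd $n$, the passage from original letters to primed letters is an instance of ``eliminate the top-and-bottom (left/right-facing) arrow labels and keep the middle-row labels'' on a transition diagram that is a horizontal chain of central squares, Lemma~\ref{sandwichproperty} immediately yields that the combinatorial effect on a cutting sequence $s$ is $ksl(s)$. So the real content of the theorem's proof is: (a) verifying that the general double $n$-gon's auxiliary-and-primed picture (Figure~\ref{double17gon}) does produce a chain of central squares with the claimed top/bottom-versus-middle labeling behavior, and (b) verifying that the geometric derivation procedure — cut along vertex-generator-guide edges, reassemble, un-shear, flip — is faithfully computed by ``record the primed edges crossed,'' which is exactly Remark~\ref{lookatprimes} combined with Theorem~\ref{theyrethesame}.

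The main obstacle I anticipate is part (a): showing in full generality — not just for $n=5,7$ — that when one draws the double $n$-gon with its $n-3$ auxiliary edges and all primed edges, the induced transition diagram is genuinely a chain of $(n-3)/2$ congruent central squares with the top/bottom arrows carrying the two extreme edge labels ($S_1$ and the top edges) and the middle arrows carrying the ``interior'' primed edges, with crossings distributed so that Lemma~\ref{sandwichproperty}'s hypothesis is met. This requires a careful bookkeeping of which edges lie at angles $0$ and $\alpha/2$, which primed edge separates which pair of auxiliary edges (Lemma~\ref{midpoint} gives the key incidence $S_k'\cap S_k\neq\emptyset$), and an induction on $n$ adding two auxiliary edges and two central squares at a time — the same induction that proves Lemma~\ref{sandwichproperty}. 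Once the diagram's shape is pinned down, the rest is the already-established machinery; so I would spend most of the write-up on the combinatorial geometry of Figure~\ref{double17gon} and treat the six-step walk-through as a direct generalization of Section~\ref{sequences}.
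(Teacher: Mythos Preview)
Your proposal is correct and follows essentially the same six-step strategy as the paper, reducing to Lemma~\ref{sandwichproperty} once the transition diagram is shown to have the required chain-of-squares form. The paper's answer to what you call ``the main obstacle'' is not an induction on $n$ but a single parallelogram ``building block'' (Figure~\ref{buildingblock}) from which the entire middle of the transition diagram is read off uniformly; also, your count of central squares is off (the heptagon has three, not $(7-3)/2=2$), though this does not affect the argument.
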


\begin{proof}
We will show that all of the labels on the top and bottom horizontal arrows of the transition diagram are eliminated, and the edges $S_k$ along the middle row on the vertical arrows are replaced with $S_k$'; this is sufficient to prove that only sandwiched letters are kept, as shown in Lemma \ref{sandwichproperty}. 

\vspace{10pt} \noindent $\mathbf{STEP \ 1:}$ First, we need to construct a transition diagram. Instead of labeling our edges $A,B$, etc. we use $S_1,\ldots,S_n$. We assume that all trajectories go up, and that $\theta \in [0,\pi/n)$. To obtain the diagram, we glue the $n$-gons in Figure \ref{double17gon} together along various identified edges, to see which auxiliary edges must be crossed to go from one edge to another. All of these gluings follow a similar pattern, so we can create a parallelogram ``building block" of the interactions between the various edges, which is shown at the top of Figure \ref{buildingblock}.  

\begin{figure}[!h]
\centering
\includegraphics[width=430pt]{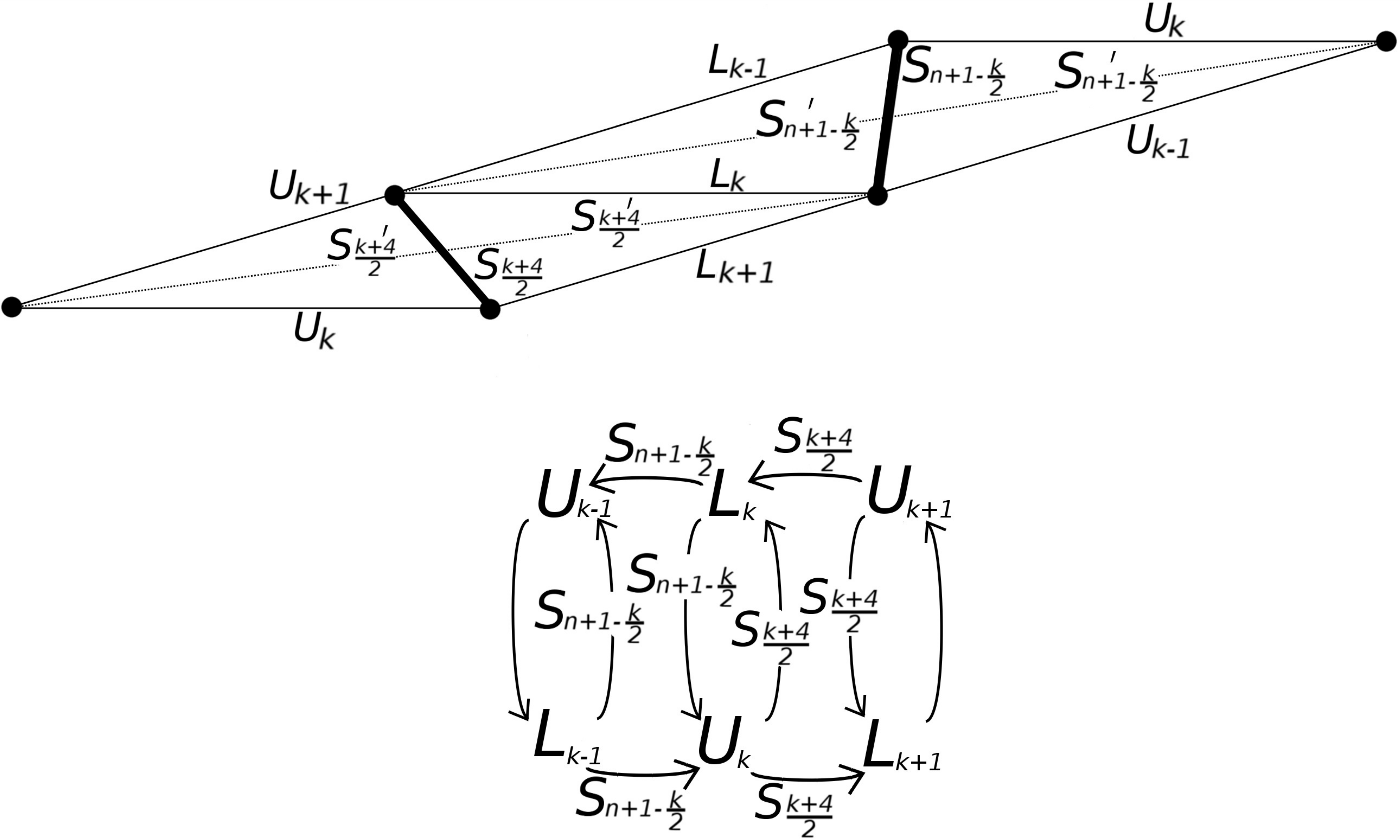}
\begin{quote}\caption{A ``building block" showing the relative positions of various edges. Edges of the original polygons are bold; their vertices are highlighted with dots; auxiliary diagonals are thin lines; and primed edges are dotted lines \label{buildingblock}} \end{quote}
\end{figure}

We use the parallelogram building block to construct a transition diagram (see the top diagram in Figure \ref{transitions}, ignoring the arrow labels). A cutting sequence on the $n$-gon is a bi-infinite sequence taken from the letters ${S_1,\ldots,S_n}$, which can be seen as a path around the transition diagram.

\begin{figure}[!h]
\centering
\includegraphics[width=430pt]{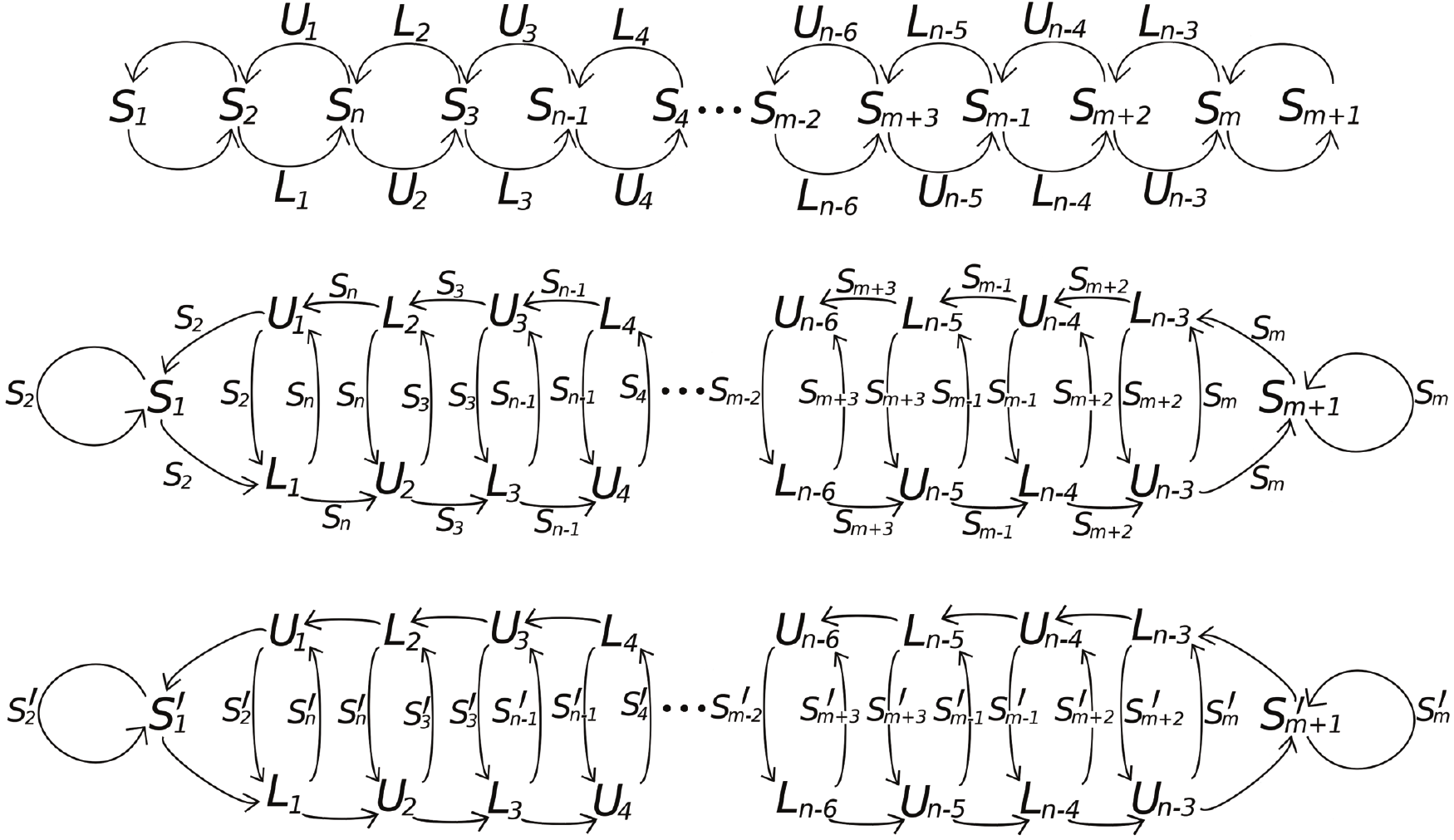}
\begin{quote}\caption{The transition diagram for a general double $n$-gon with odd $n$. (a) The basic transition diagram for original edges and auxiliary edges (b) The dual transition diagram, exchanging the roles of original and auxiliary edges (c) The transition diagram using primed edges instead of original edges \label{transitions}} \end{quote}
\end{figure}

\vspace{10pt} \noindent $\mathbf{STEP \ 2:}$ We augment the lower and upper polygons with the auxiliary edges $L_i$ and $U_i$, respectively, at angles $\theta=0$ and $\theta=\pi/n$. Then we label the arrows of the transition diagram, using the parallelogram building block to determine which auxiliary edges are crossed to get from one of the polygon's edges to another. We augment our cutting sequence with the auxiliary edge labels, so our sequence is now composed of the letters $\{S_i,U_i,L_i\}$.

The extreme ends of the transition diagram are easy to check: The left side of the transition diagram corresponds to the middle part of Figure \ref{double17gon}, where the polygons are shown as being glued together. The right side of the transition diagram corresponds to the top and bottom of Figure \ref{double17gon}. The middle of the diagram corresponds to the intermediary edges on both sides, and can be built from multiple copies of the transition diagram building block at the bottom of Figure \ref{buildingblock}. 

To determine the labels of this transition diagram building block, we can use the top picture in Figure \ref{buildingblock}. The quadrilateral in the middle is a piece of the lower part of a lower $n$-gon. On both sides, we have glued triangles cut from the upper part of an upper $n$-gon. Using these, we can determine which edges must be crossed to go from each $L_i$ to each $U_j$, and fill in the middle of the transition diagram.

\vspace{10pt} \noindent $\mathbf{STEP \ 3:}$ We take "the dual" of the transition diagram, as we did in the pentagon to go from Figure \ref{arrowsletters} to Figure \ref{dualletters}, obtaining the middle picture in Figure \ref{transitions}. A path on the top diagram uniquely determines a path on the dual diagram, and vice-versa. We could also obtain this diagram from the building block in Figure \ref{buildingblock}, by looking at which original edges ($S_i$) and primed edges ($S_i'$) are crossed to go from one auxiliary edge ($U_i$ or $L_i$) to another. This does not change our sequence; it is still composed of the letters $\{S_i,U_i,L_i\}$.

\vspace{10pt} \noindent $\mathbf{STEP \ 4:}$ Our goal is to end up with just the primed edges that our trajectory crosses. We use the parallelogram building block in Figure \ref{buildingblock} again, this time ignoring the original edges ($S_i$) and looking at which primed edges ($S_i'$) are crossed to go from one auxiliary edge ($U_i$ or $L_i$) to another. This gives us the bottom diagram in Figure \ref{transitions}. Our cutting sequence is now composed of the letters  $\{S_i',U_i,L_i\}$.

\vspace{10pt} \noindent $\mathbf{STEP \ 5:}$ In our cutting sequence, we eliminate the auxiliary letters $\{U_i,L_i\}$, so our sequence is entirely composed of primed letters $\{S_i'\}$. To obtain the derived sequence, simply eliminate the "primes" to end up with the derived sequence in the letters $\{S_i\}$.

\vspace{10pt} \noindent $\mathbf{STEP \ 6:}$ We look at the middle and bottom diagrams in Figure \ref{transitions}. To get from the middle to the bottom, we eliminate the arrows on the top left-facing arrows and the bottom right-facing arrows, and we keep all the middle arrows on the up- and down-facing arrows, but make them "primed." As shown in Lemma \ref{sandwichproperty}, this is equivalent to keeping only sandwiched letters, so this completes the proof.
\end{proof}

{\footnotesize \bibliography{thesis}}

\end{document}